\theoremstyle{plain}
\newtheorem{thm}{Theorem}[section]
\newtheorem{cor}[thm]{Corollary}
\newtheorem{lem}[thm]{Lemma}
\newtheorem{prop}[thm]{Proposition}
\theoremstyle{definition}
\newtheorem{defi}[thm]{Definition}
\theoremstyle{remark}
\newtheorem{rem}[thm]{Remark}
\newtheorem{notation}[thm]{Notation}
\numberwithin{equation}{section}
\newenvironment{proofof}[2]{\begin{proof}[Proof of #1~\ref{#2}.]}{\end{proof}}
\newcommand{\beq}{\begin{equation}}
\newcommand{\eeq}{\end{equation}}
\newcommand{\numberset}{\mathbb}
\newcommand{\CC}{\numberset{C}}
\newcommand{\DD}{\numberset{D}}
\newcommand{\FF}{\numberset{F}}
\newcommand{\HH}{\numberset{H}}
\newcommand{\KK}{\numberset{K}}
\newcommand{\NN}{\numberset{N}}
\newcommand{\PP}{\numberset{P}}
\newcommand{\RR}{\numberset{R}}
\newcommand{\TT}{\numberset{T}}
\newcommand{\ZZ}{\numberset{Z}}
\DeclareMathOperator{\SL}{SL}
\DeclareMathOperator{\GL}{GL}
\DeclareMathOperator{\PSL}{PSL}
\DeclareMathOperator{\Aff}{Aff}
\DeclareMathOperator{\height}{height}
\DeclareMathOperator{\sys}{sys_{s.c.}}		
\DeclareMathOperator{\Id}{Id}
\DeclareMathOperator{\Realpart}{Re}
\DeclareMathOperator{\Imaginarypart}{Im}
\DeclareMathOperator{\Area}{Area}
\renewcommand{\epsilon}{\varepsilon}
\renewcommand{\phi}{\varphi}
\newcommand{\cA}{\mathscr{A}}
\newcommand{\cD}{\mathcal{D}}
\newcommand{\cK}{\mathcal{K}}
\newcommand{\cL}{\mathcal{L}}
\newcommand{\cU}{\mathcal{U}}
\newcommand{\cW}{\mathcal{W}}
\newcommand{\area}[2]{{\Area\bigl({#2},{#1}\bigr)}}
\newcommand{\areavert}[1]{{\Area\bigl({#1}\bigr)}}
\newcommand{\areaW}[2]{{\Area\bigl({#2}\bigr)}}
\newcommand{\ab}[1]{{\alpha_{#1}}} 			
\newcommand{\bb}[1]{{\beta_{#1}}}			
\newcommand{\cc}[1]{{c_{#1}}} 			
\newcommand{\g}{G} 					
\newcommand{\f}{F} 					
\newcommand{\peq}[1]{= {#1 }_\partial} 		
\newcommand{\Arc}{\mathcal{A}}
\newcommand{\W}[2]{{W^{(#1)}_{#2}}}
\newcommand{\vv}[2]{{v_{#2}^{(#1)}}}
\renewcommand{\emptyset}{\varnothing}
\DeclarePairedDelimiter{\abs}{\lvert}{\rvert}		
\newcommand{\reflection}{\begin{pmatrix} 0 & 1 \\ 1 & 0 \end{pmatrix}}
\newcommand{\slduer}{\SL(2, \RR)}
\newcommand{\pslduer}{\PSL(2,\RR)}
\renewcommand{\Re}{\Realpart}
\renewcommand{\Im}{\Imaginarypart}
\newcommand{\re}{\Re}
\newcommand{\im}{\Im}
\begin{document}

\title[The Lagrange spectrum of a Veech surface has a Hall ray]{The Lagrange spectrum of a Veech surface \\ has  a Hall ray}

\author[M. Artigiani]{Mauro Artigiani}
\address{School of Mathematics \\ University of Bristol \\ University Walk \\ Bristol \\ BS8~1TW \\ United Kingdom}
\email{mauro.artigiani@bristol.ac.uk}

\author[L. Marchese]{Luca Marchese}
\address{LAGA \\ Universit\'e Paris 13 \\  Avenue Jean-Baptiste Cl\'ement \\ 93430 Villetaneuse \\ France}
\email{marchese@math.univ-paris13.fr}

\author[C. Ulcigrai]{Corinna Ulcigrai}
\address{School of Mathematics \\ University of Bristol \\ University Walk \\ Bristol \\ BS8~1TW \\ United Kingdom}
\email{corinna.ulcigrai@bristol.ac.uk}

\subjclass[2010]{Primary 11J06, 37D40; Secondary 32G15} 

\keywords{Lagrange spectrum, Veech surfaces, Hall ray, Boundary Expansions}



\begin{abstract}
We study  Lagrange spectra of Veech translation surfaces, which are a generalization of the classical Lagrange spectrum.
We show that any such Lagrange spectrum contains a Hall ray.
As a main tool, we use the boundary expansion developed by Bowen and Series to code geodesics in the corresponding Teichm\"uller disk and prove a formula which allows to express large values in the Lagrange spectrum as sums of Cantor sets.
\end{abstract}

\maketitle


\section{Introduction}
The Lagrange spectrum $\cL$ is a classical and much studied subset of the extended real line, which can be described either geometrically or number theoretically. 
In connection with Diophantine approximation, it is the set
\[
	\cL := \left\{ L(\alpha):= \limsup_{q,p\to\infty} \frac{1}{q|q\alpha -p|},  \alpha \in\RR \right\} \subset \overline{\mathbb{R}}:=\mathbb{R} \cup \{+\infty\}.
\]
In other words, $L \in \cL$ if and only if there exists $\alpha\in\RR$ such that, for any $c>L$, we have $|\alpha - p/q| > 1/cq^2$ for all $p$ and $q$ big enough and, moreover, $L$ is minimal with respect to this property. 
One can show that $\cL$ can also be described as a penetration spectrum for the geodesic flow on the (unit tangent bundle of the) modular surface $X=\HH/ \SL(2,\ZZ)$ in the following way.
If $(\gamma_t)_{t\in \RR}$ is any hyperbolic geodesic on $X$ which has $\alpha \in \RR$ as forward endpoint, the  value $L(\alpha)$ is related to the geometric quantity
\begin{equation}\label{eq:spectrumheight}
	\limsup_{t\to+\infty} \height (\gamma_t)
\end{equation}
where $\height (\cdot)$ denotes the hyperbolic height function. This quantity gives the asymptotic depth of penetration of the geodesic $\gamma_t$ into the cusp of the modular surface.
 
The structure of $\cL$ has been studied for more than a century, from the works of Markoff (1879). Hurwitz (1891) and Hall (1947) up to the recent results by Moreira~\cite{Mo}. We refer the interested reader to the book~\cite{CF} by Cusick and Flahive.     
Moreover, several generalizations of the classical Lagrange spectrum have been studied by many authors, in particular in the context of Fuchsian groups and, more in general, negatively curved manifolds see~\cite{Ferenczi,HaasSeries,HersonskyPaulin, Maucourant, paulin2, schmidtsheingorn, Series:Markoff, Vulakh}. For a very brief survey of these generalizations, we refer to the introduction of~\cite{HMU}. 
 
In particular, a generalization of Lagrange spectrum was recently defined in~\cite{HMU} in the context of \emph{translation surfaces}, which are surfaces obtained by glueing a finite set of polygons in the plane, identifying pairs of isometric parallel sides by translations.
The simplest example of a translation surface is a flat torus, obtained by identifying opposite parallel sides of a square.
More in general, if we start from a regular polygon  with $2n$ sides, with $n\geq 4$, we obtain a translation surface of higher genus, for example of genus $2$ for the regular octagon. 
Translation surfaces carry a flat Euclidean metric apart from finitely many conical singular points and can equivalently be defined as Riemann surfaces with Abelian differentials (see for example the survey by Masur~\cite{Masur} or the lecture notes by Viana or Yoccoz~\cite{Viana,Yoccoz}).
These surfaces have been object of a great deal of research in the past thirty years, in connection with the study of interval exchange transformations (IETs), billiards in rational polygons and the Teichm\"uller geodesic flow (see for example the surveys~\cite{Masur,Viana,Yoccoz,Zorich}).

In this paper, we study Lagrange spectra of \emph{Veech translation surfaces}.
These are special translation surfaces which have many symmetries (the definition is given in \S~\ref{sec:background}).
For instance, the surfaces obtained by glueing regular polygons with $2n$ sides are all examples of Veech translation surfaces.
One of the characterizing properties of Veech translation surfaces is that the moduli space of their affine deformations is the unit tangent bundle of a hyperbolic surface, called \emph{Teichm\"uller curve} (see \S~\ref{sec:background}). 
 
A \emph{saddle connection $\gamma$} on a translation surface $S$ is a geodesic segment for the flat metric of $S$ starting and ending in a conical singularity and not containing any other conical singularity in its interior. 
To each saddle connection $\gamma$ we can associate a vector in $\RR^2$, called \emph{displacement vector} (or holonomy), which can be obtained developing $\gamma$ to $\RR^2$ and then taking the difference between the final and initial point of the flat geodesic in $\RR^2$. 
In the following, we will often abuse the notation and say ``let $v$ be a saddle connection''  when $v \in \RR^2$ is the displacement vector of a saddle connection. 

Let $v$ be (the displacement vector of) a saddle connection on $S$ and let $\theta $ be a fixed direction.
Then $\areavert{v}$ is by definition the area of a rectangle with horizontal and vertical sides which has $v$ as a diagonal.
If we identify  $\RR^2$ with the complex plane $\CC$ and denote by $\Re(v)$ and $\Im(v)$ the real and imaginary part of $v$, we have that
$\areavert{v}=|\Re(v)|\cdot|\Im(v)|$.
More in general, we will denote by $\area{\theta}{v}$  the area of a rectangle which has sides parallel to $\theta$ and its perpendicular direction $\theta_\perp$ and $v$ as a diagonal.
That is 
\[
	\area{\theta}{v}=|\Re_\theta(v)|\cdot|\Im_\theta(v)|, 
\]
where  $\Re_\theta(v): = \Re (e^{i\theta} v)$ and $\Im_\theta(v): = \Re (e^{i\theta} v)$ are respectively the real and imaginary part of the rotated vector $e^{i\theta} v \in \CC$.

The definition of Lagrange spectrum for a translation surface given in~\cite{HMU} reduces, in the case when $S$ is a Veech surface of total area one, to $\cL(S) = \{L_S(\theta), 0\leq \theta < 2\pi \}$, where
\beq\label{eq:defLalpha}
L_S(\theta):=\limsup_{|\Im_\theta(v)|\to\infty} \frac{1}{\area{\theta}{v}} = \frac{1}{\liminf_{|\Im_\theta(v)|\to\infty} \area{\theta}{v}}.
\eeq
One can show (see~\cite{HMU}) that this gives a generalization of the classical Lagrange spectrum, since it reduces to the classical definition when $S$ is the flat torus $\TT^2 / \ZZ^2$. 
The same quantity, in analogy with the classical case, has also a definition of Diophantine nature in terms of interval exchange transformations (which can be found in~\cite{HMU}) and also a hyperbolic equivalent definition, analogous to~\eqref{eq:spectrumheight}, which we will now explain.
As we said above, if $S$ is a Veech surface, the space of its affine deformations is the unit tangent bundle to a non-compact finite-volume hyperbolic surface $X$, called \emph{Teichm\"uller curve}, as explained in \S~\ref{sec:background}.
The \emph{(saddle connection) systole} function $S' \mapsto\sys(S')$ on $T^1 X$ is defined as the \emph{length of the shortest saddle connection} of the translation surface $S'$, that is
\[
	\sys(S'):=\min\{|v|, v \text{ displacement vector of a saddle connection on } S' \}.
\]
Let $(g_t)_t$  be the geodesic flow on $T^1 X$.
Then a dynamical estimate of the asymptotic maximal excursion of the positive orbit $(g_t(S'))_{t\geq 0}$ into the cusps of $T^1 X$ is given by
\beq\label{eq:definitions}
	s(S'):=\liminf_{t\to\infty} \sys(g_t(S')).
\eeq
This quantity is related to $L_S (\theta)$ by the following formula, due to Vorobets (see~\cite{Vor:pla} and also \S1.3 of~\cite{HMU}):
\[
	L_S(\theta) =  \frac{2}{s^2(r_\theta S)},
\]
where $r_\theta S \in T^1 X$ is the surface obtained by rotating $S$ by an angle $\theta$ in the anticlockwise direction.

A fundamental result on the classical Lagrange spectrum was proven in~\cite{Hall} by Hall in 1947, who showed that $\cL$ contains a positive half-line, i.e.\ there exists $r$ such that the interval $[r,+\infty] \subset \cL$. Any interval with this property is now called a \emph{Hall's ray}. 
The value of $r$ was later improved by Cusick and others~\cite{CF} until finally Freiman in 1973 computed precisely the smallest $r$ with this property, see~\cite{Freiman}. 
Our main result generalizes the classical result by Hall to any Veech surface.
We prove the following:

\begin{thm}[Existence of Hall ray]\label{thm:HallVeech}
Let $S$ be a Veech translation surface and let $\cL(S)$ be its Lagrange spectrum.
Then $\cL(S)$ contains a \emph{Hall ray}, that is there exists $r= r(S)>0$ such that 
\[
	[r(S),+\infty] \subset \cL(S).
\]
\end{thm}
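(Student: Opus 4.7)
The plan is to adapt the classical Hall strategy, which gives the half-line $[r, +\infty]\subset \cL$ through the continued fraction algorithm on $\SL(2,\ZZ)$, to the more general Fuchsian setting of a Veech group $\Gamma$. The classical argument has two ingredients: a number-theoretic formula expressing $L(\alpha)$ as a $\limsup$ over $n$ of a sum $[a_{n+1};a_{n+2},\dots]+[0;a_n,a_{n-1},\dots]$ of the future and past tails of the continued fraction expansion of $\alpha$; and Hall's lemma showing that the sum $C+C$ of a sufficiently thick Cantor set of continued fractions with bounded partial quotients contains an interval. The plan is to replace continued fractions with the Bowen--Series boundary expansion associated to the Veech group~$\Gamma$, which provides a symbolic coding of geodesics on the Teichm\"uller curve $X = \HH/\Gamma$ in terms of the infinite cyclic parabolic generators of each cusp.

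First I would set up the coding. For the Fuchsian group $\Gamma$ one has a natural partition of $\partial\HH=\RR\cup\{\infty\}$ associated with a Dirichlet-type fundamental domain whose cusp vertices correspond to the cusps of $X$. The Bowen--Series map then produces, for almost every $\theta\in\partial\HH$, an expansion $\theta\leftrightarrow(s_1,s_2,\dots)$ in an alphabet that encodes which side of the fundamental domain the geodesic crosses and which parabolic element is applied. Blocks of consecutive symbols corresponding to a single cusp play the r\^ole of large partial quotients: deep cusp excursions of $g_t(r_\theta S)$ translate into long constant strings in this expansion. Under the Vorobets formula $L_S(\theta)=2/s^2(r_\theta S)$, large values of $L_S(\theta)$ correspond precisely to geodesics whose coding contains such long strings pointing into some fixed cusp $\mathfrak{c}$. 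The first key step is therefore to derive a Bowen--Series analogue of the classical two-sided formula for $L_S(\theta)$, expressing the $\limsup$ defining $s(r_\theta S)$ as the $\limsup$ over the blocks of the coding pointing into $\mathfrak{c}$ of a sum $F_n(\theta)+P_n(\theta)$, where $F_n$ depends on the future of the expansion past the block and $P_n$ on the past of the expansion before the block.

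Once this formula is in place, I would fix a cusp $\mathfrak{c}$ and consider, for a large parameter $N$, the sets of $\theta$ whose future (resp.~past) expansion at a prescribed block contains only symbols drawn from a controlled finite sub-alphabet, with bounded multiplicities in each cusp excursion. Being defined by a uniformly expanding and Markov symbolic restriction, these sets are Cantor sets $\cC_N^+$ and $\cC_N^-$ in $\partial \HH$, and the values of $F_n$ and $P_n$ on them range over Cantor subsets $\cK_N^\pm\subset\RR$. Realising large values of $L_S(\theta)$ as sums from $\cK_N^+ + \cK_N^-$, it suffices to show that, for $N$ large enough, this sumset contains an interval $[r_N,R_N]$ with $R_N\to +\infty$; taking the union over $N$ yields a half-line $[r(S),+\infty]\subset\cL(S)$. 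To obtain the interval, I would compute the Newhouse thickness (or an analogous local dimension estimate) of $\cK_N^\pm$ using the bounded distortion of the Bowen--Series map, and apply Newhouse's gap lemma: when the product of thicknesses exceeds $1$, the sum contains an interval.

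The main obstacle, as in \cite{HMU} but now in a quantitative form, is to push the Bowen--Series machinery far enough to write the Vorobets quantity \emph{exactly} as a genuine sum of a past and a future function that depend only on one side of the cut, and to do so uniformly across the cusp-block decomposition; without an exact splitting there is no reduction to a Cantor-set sum problem, and with error terms one risks losing the interval property. The secondary difficulty is the thickness estimate: unlike the Gauss map, where partial quotients give an explicit family of contractions, for a general Veech group one must extract enough hyperbolicity and bounded distortion from the Bowen--Series coding (and choose the sub-alphabet carefully, typically requiring one long parabolic power together with controlled neighbouring digits) to force the thickness of $\cK_N^\pm$ to diverge with $N$. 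Once both points are handled, Newhouse's gap lemma closes the argument and produces $r(S)$ as announced.
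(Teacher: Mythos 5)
Your proposal follows essentially the same route as the paper: code directions via the Bowen--Series boundary expansion for a torsion-free finite-index subgroup of the Veech group, establish that large Lagrange values are detected by wedges attached to the boundary expansion, prove a two-sided ``past plus future'' continued-fraction formula for the inverse areas (Theorem~\ref{thmrenormalizedformula} and Corollary~\ref{correnormalizedformula}), and realise any sufficiently large value as a sum drawn from two Cantor sets of restricted boundary expansions (no long cuspidal blocks). The only genuine divergence is the final step: you invoke Newhouse's thickness gap lemma, whereas the paper verifies Hall's original gap condition for a monotone slow subdivision (holes smaller than both flanking intervals, controlled via bounded distortion of the M\"obius maps $A_\beta^\alpha$, $B_\beta^\alpha$ -- see Lemmas~\ref{lemgapconditionfuture} and~\ref{lemgapconditionpast}) and then applies Hall's 1947 theorem directly, reproved in the appendix. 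Both tools accomplish the same thing; Hall's criterion is somewhat more elementary and avoids quantifying thickness explicitly, while the Newhouse route would also require a uniform bounded-distortion estimate of the same flavour, so the technical work is comparable.
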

Let us now comment on how our Theorem~\ref{thm:HallVeech} relates to existing results in the literature. 
For a special case of Veech surfaces, i.e.\ \emph{square-tiled} surfaces, which are translation surfaces that are covers of the flat torus branched over a single point (also known as \emph{origamis} or arithmetic Veech surfaces), the existence of a Hall ray was proved in~\cite{HMU}.
However, the proof was very specific to square-tiled surfaces.
The result in~\cite{HMU} guarantees the existence of a Hall ray for the Lagrange spectrum of any closed $\SL(2,\RR)$-invariant locus of translation surfaces containing a square-tiled surface, see Corollary~1.7 in~\cite{HMU}.
In the same way, one deduces from Theorem~\ref{thm:HallVeech} the stronger result that the Lagrange spectrum of every closed invariant locus containing a \emph{Veech surface} has a Hall ray.
This leaves open the natural question whether the existence of a Hall ray can be proved also for invariant loci which do not contain Veech surfaces (such loci are known to exist by the work of McMullen~\cite{McMg2}).

As we remarked above (see the discussion around~\eqref{eq:definitions}), the Lagrange spectra that we consider are related to penetration spectra in the context of hyperbolic surfaces.
More precisely, one can define Lagrange spectra as the penetration spectra of geodesics into the cusps of a non-compact (finite volume) hyperbolic surface $X$.
This definition reduces to $\cL(S)$ when $X$  is the Teichm\"uller curve  of a Veech surface $S$ and the penetration is measured by the function $1/\sys$. 
The question of existence of a Hall ray in the general context of non-compact (finite volume) hyperbolic surfaces is currently open and thus our result constitutes a first step in this direction.
On the other hand, Schmidt and Sheingorn proved in~\cite{schmidtsheingorn} the existence of a Hall ray for the \emph{Markoff spectrum} of penetration of geodesics into the cusps of $X$, which is a close relative of the Lagrange spectrum itself.
One can show that this result implies the existence of a Hall ray for the set of values
\beq\label{eq:Markoff}
\left\{ \sup_{t>0}\frac{2}{\sys(g_t \rho_\theta S)^2}, \quad \text{ for }0\leq  \theta <2\pi \right\}.
\eeq
We stress that such result does not imply that there is a Hall ray for the Lagrange spectrum $\cL(S)$.
In general, indeed, it is much easier to construct values in the Markoff spectrum than in the Lagrange spectrum, essentially because in order to show that a certain value is in the former it is enough to construct a geodesic along which the supremum in~\eqref{eq:Markoff} is achieved, while for the latter one has to construct a sequence of times which tends to the given value.  

Two different ways of further generalizing Lagrange spectra of hyperbolic surfaces are to either consider variable curvature or higher dimension.
In both cases there are results in the direction of the existence of Hall rays.
However, as we said above, this problem is still open in dimension $2$ even for the case of constant negative curvature.
In a very recent work~\cite{MR}, G.~Moreira and S.~A.~Roma\~{n}a considered the case of surfaces with \emph{variable} negative curvature.
They proved that the corresponding spectra have non-empty interior, i.e.\ they contain an \emph{interval} (not necessarily a Hall \emph{ray}) for a generic small $C^k$-perturbation of a hyperbolic metric on the punctured surface, where $k\geq 2$.  
In the context of hyperbolic manifolds of negative curvature in dimension $n\geq 3$, Paulin and Parkkonen in~\cite{paulin2} showed the existence of Hall rays for the associated Lagrange spectra.
Moreover, they also give a quantitative \emph{universal} bound on the beginning of the Hall ray.
Unfortunately our methods do not allow us to give quantitative estimates on $r(S)$ in Theorem~\ref{thm:HallVeech}.
It is clear though that such estimates cannot be independent of the topological complexity of the surface, since as shown in~\cite{HMU} the minimum of $\cL(S)$ grows with the genus and the number of singularities (see Lemma~1.3 in~\cite{HMU}). 


Our proof follows the scheme of Hall.
A crucial point for his proof was the formula which allows to compute values of the Lagrange spectrum in terms of the continued fraction entries, that is 
\beq\label{eq:classical_formula}
	L_{\TT^2}(\theta)=\limsup_{n\to\infty}{[a_{n-1}, a_{n-2}, \dots, a_1] + a_n + [a_{n+1}, a_{n+2}, \dots]},
\eeq
where $(a_n)_n$ are the continued fraction entries of the tangent of the angle $\theta$.
One of our main results is that a similar formula holds for high values in the spectrum. In our formula, the continued fraction expansion entries are replaced by cutting sequences of hyperbolic geodesics and in particular  by the boundary expansions invented and studied by Bowen and Series (see~\cite{BowenSeries}).
We then exploit the interplay between the hyperbolic and flat worlds to study areas of a carefully chosen set of saddle connections which realize high values of the spectrum and can be calculated in terms of boundary expansions.

\subsection*{Structure of the paper}
The rest of the paper is arranged as follows. 
In \S~\ref{sec:background}, we define translation surfaces and Veech surfaces.
Moreover, we recall the background material we need about boundary expansions of hyperbolic geodesics.
In \S~\ref{sec:Lagrangeboundary} we show how one can use boundary expansions and the interplay between hyperbolic and flat worlds to compute large values in the Lagrange spectrum $\cL(S)$. In \S~\ref{sec:acceleration} we define an acceleration of  the boundary expansion which still allows to compute values in the spectrum and prove some bounds on values in terms of the number of steps of the acceleration.
Finally, in \S~\ref{sec:Hall}, we show the existence of a Hall's ray, proving our main Theorem~\ref{thm:HallVeech}. 
The main tool is the formula appearing in Theorem~\ref{thmrenormalizedformula}, which generalizes the classical~\eqref{eq:classical_formula} and enables us to express values in the Lagrange spectrum $\cL(S)$ as a sum of Cantor sets and then show the existence of a Hall's ray in a similar way than in  the classical proof of Hall.
Section~\ref{sec:renormalized_formula} is devoted to the proof of Theorem~\ref{thmrenormalizedformula}, while in \S~\ref{sec:Cantorproof} we prove some technical estimates on the Cantor sets which shows that their sum contains an interval.

\section{Background}\label{sec:background}
\subsection{Basic notation}
Let $\NN=\{0,1, \dots \}$ denote the natural numbers.
Let $\CC$ be the complex plane, which we consider identified with $\RR^2$. 
Let $\HH = \{ z \in \CC , \Im z>0 \} $ denote the upper half plane and let $\DD = \{ z \in \CC , |z|<1 \} $ denote the unit disk.
We will use $\HH$ and $\DD$ interchangeably, by using the identification $\mathscr{C} \colon \HH \to \DD$ given by 
\[
	\mathscr{C}(z)= \frac{z-i}{z+i}, \qquad z \in \HH.
\]
Let $\SL(2, \RR)$ be the set of $2\times 2$ matrices with real entries and determinant one. Given $A \in \SL(2, \RR)$ and $v \in \RR^2 $ we denote by $A v$ the linear action of $A$ on $v$.  The group $\SL(2, \RR)$ also acts on $\HH$ by \emph{M\"obius transformations} (or \emph{homographies}).
Given $\g \in \SL(2, \RR)$ we will denote by $\g*z$ the action of $\g$ on $z \in \HH$ given by
\[
z \mapsto \g*z = \frac{a z+b}{c z+d}, \qquad \text{if} \quad \g= \begin{pmatrix}a&b\\ c&d \end{pmatrix}.
\] 
This action of $\SL(2, \RR)$ on $\HH$ induces an action on the unit tangent bundle $T^1 \HH$, by mapping a unit tangent vector at $z$ to its image under the derivative of $\g$ in $z$, which is a unit tangent vector at $\g*z$.
This action is transitive but not faithful and its kernel is exactly $\{ \pm \Id \}$, where $\Id$ is the identity matrix.
Thus, it induces an isomorphism between  $T^1 \HH$ and $\PSL(2, \RR) =\SL(2, \RR) / \{ \pm \Id \}$. Throughout the paper, we will often write $A \in \PSL(2, \RR)$ and denote by $A \in \SL(2,\RR)$ the equivalence class of the matrix $A$ in $\PSL(2, \RR)$. Equality between matrices in $\PSL(2,\RR)$ must be intented as equality as equivalence classes. 
Finally, let $\RR \PP^1$ denote the set of \emph{unoriented directions} in $\RR^2$. We will adopt the convention of identifying a direction in $\RR \PP^1$ with the angle $\theta$ chosen so that $-\frac{\pi}{2} \leq \theta < \frac{\pi}{2}$ and $\theta$ is the angle formed by the direction with the vertical axis measured clockwise. When we write $\theta \in \RR\PP^1$ we hence assume that $-\frac{\pi}{2} \leq \theta < \frac{\pi}{2}$. 

\subsection{Translation surfaces and affine deformations}
A translation surface is a collection of polygons $P_j\subset\RR^2\cong\CC$ with identifications of pairs of parallel sides so that (1) sides are identified by maps which are restrictions of translations, (2) every side is identified to some other side and (3) when two sides are identified the outward pointing normals point in opposite directions. If $\sim$ denotes the equivalence relation coming from identification of sides then we define the surface $S=\bigcup P_j/\sim$. A translation surface inherits from the plane $\RR^2$ a Euclidean flat metric at all points apart from a finite set $\Sigma$ of singularities which is contained in the vertices of the polygons.
A saddle connection is a geodesic for the flat metric that connects two singularities, not necessarily distinct, and which does not contain any singularity in its interior.
A \emph{cylinder} is a subset of the surface which is the image of an isometrically immersed flat cylinder of the form $\RR/ w\ZZ \times h$, for some numbers $w$ and $h$ in $\RR_+$. 
Remark that any cylinder is foliated by  a collection of periodic flat geodesics, which for short will be called \emph{closed geodesics}.
One can show that any closed geodesic comes with a cylinder of parallel closed geodesics. 
 
The group $\GL^+(2, \RR)$ of two by two real matrices with positive determinant acts naturally on translation surfaces.
Given $\nu \in \GL^+(2, \RR)$ and a translation surface $S =\bigcup P_j/\sim$, denote by $\nu  P_j \subset \RR^2$ the image of $P_j \subset \RR^2$ under the linear map $\nu$.
Since $\nu$ takes pairs of parallel sides to pairs of parallel sides, it preserves the identifications between the sides of the polygons.
The surface obtained by glueing the corresponding sides of $\nu P_1, \dots, \nu P_n $ will be denoted by $\nu \cdot S$ and can be thought of as an affine deformation of $S$. 
We will consider in  particular the  action of the following $1$-parameter subgroups of deformations:
\[
	g_t =	\begin{pmatrix}
			e^{t/2}	 & 	0 \\
			0 	& 	e^{-t/2}
		\end{pmatrix}
	\qquad \text{and} \qquad
	r_\theta=\begin{pmatrix}
			\cos\theta 	&	 -\sin\theta \\
			\sin\theta 	&	 \cos\theta
		     \end{pmatrix}.
\] 
The first deformation, which stretches the horizontal direction and contracts the vertical, is called \emph{Teichm\"uller geodesic flow},   while the second corresponds simply to rotating the surface, and hence changing the vertical direction. 

Let $S$ and $S'$ be translation surfaces.
Consider a homeomorphism $\Psi$ from $S$ to $S'$ which takes the singular points $\Sigma$ of $S$ to the singular points $\Sigma'$ of $S'$ and is a diffeomorphism outside of $\Sigma$.
We can identify the derivative $D\Psi_p$ with an element of $\GL^+(2,\RR)$.
We say that $\Psi$ is an \emph{affine diffeomorphism} if the derivative $D\Psi_p$ does not depend on the point $p$. 
We say that $S$ and $S'$ are \emph{affinely equivalent} if there is an affine diffeomorphism $\Psi$ between them.  
We say that $S$ and $S'$ are \emph{translation equivalent} if they are affinely equivalent with $D\Psi=\Id$.
If $S$ is given by identifying sides of polygons $P_j$ and $S'$ is given by identifying sides of polygons $P'_k$  then a translation equivalence $\Upsilon$ from $S$ to $S'$ can be given by a \emph{``cutting and pasting''} map.
That is to say we can subdivide the polygons $P_j$ into smaller polygons and define a map $\Upsilon$ so that the restriction of $\Upsilon$ to each of these smaller polygons is a translation and the image of $\Upsilon$ is the collection of polygons $P'_k$.  
An affine diffeomorphism from $S$ to itself is an \emph{affine automorphism}. 
The collection of affine automorphisms is a group which we denote by $\Aff(S)$.
We can realize an affine automorphism of $S$ with derivative $\nu$ as a composition of an affine deformation from  $ S$ to $\nu\cdot S$ with a translation equivalence, or cutting and pasting map, from $ \nu\cdot S$ to $S$. 

\subsection{Veech translation surfaces and Teichm\"uller curves}\label{sec:Veechcurves}
The Veech homomorphism is the homomorphism $\Psi\mapsto D\Psi$  from $\Aff(S)$ to $\GL^+(2,\RR)$.
The image $V(S)$ of this homomorphism lies in $\SL(2,\RR)$ and is called the \emph{Veech group} of $S$.
Remark that the elements of the Veech group stabilize the surface $S$ under the action of $\SL(2,\RR)$.
Note that the term Veech group is used by some authors to refer to the image of the group of affine automorphisms in the projective group $\PSL(2,\RR)$. 

A translation surface $S$ is called a \emph{Veech surface} (or a \emph{lattice} surface) if  $V(S)$ is a lattice in $\SL(2, \RR)$.
The torus $\TT^2=\RR^2 / \ZZ^2$ is an example of a lattice surface whose Veech group is $\SL(2, \ZZ)$.
Veech more generally proved that all translation surfaces obtained from regular polygons are Veech surfaces (see~\cite{Ve:tei}). 
Veech surfaces satisfy the \emph{Veech dichotomy} (see~\cite{Ve:tei,Vor:pla}) which says that if we consider a direction $\theta$ then one of the following two possibilities holds: either there is a saddle connection in direction $\theta$ and the surface decomposes as a finite union of cylinders each of which is a union of a family of closed geodesics in direction $\theta$ or each trajectory in direction $\theta$ is dense and uniformly distributed.


Let $S$ be a Veech surface.
Since its Veech group $V(S)$ is a lattice in $\SL(2, \RR)$ then $X= \HH/  V(S)$ is a non-compact, finite volume hyperbolic surface, which we call the \emph{Teichm\"uller curve of $S$}.
We can moreover identify the unit tangent bundle $T^1 X$ of $X$ with the affine deformations modulo translation equivalence of the translation surface $S$.
Finally $T^1\DD$ can be identified with all affine deformations of $S$ as follows. 
We first use the identification induced by $\mathscr{C}$ between $T^1 \DD$ and $T^1 \HH$ and then the one between the latter and $\PSL(2, \RR)$ given by the action of M\"obius transformations.
In virtue of this identification, we will often call the space of $\SL(2, \RR)$ deformations of $S$ its \emph{Teichm\"uller disk} (for more details\footnote{Remark, however, that our conventions are slightly different from those of~\cite{SmillieUlcigrai}.}, we refer to~\cite{SmillieUlcigrai}).
We choose the convention that the center of the hyperbolic disk represents the surface $S$ and that the vertical direction on $S$ is represented by the unit tangent vector $i$. 
Let us remark that the above identification of $T^1 \DD$ with the Teichm\"uller disk induces an isomorphism between $\partial \DD$ and $\RR \PP^1$, i.e.\ the set of \emph{unoriented} directions. Given   $\xi$ in $\partial\DD$ we will denote by $\theta=\theta(\xi)$ the angle $-\frac{\pi}{2} \leq \theta < \frac{\pi}{2}$ in $\RR^2$ such that $\theta$ is the angle formed by the direction corresponding to $\xi$ with the vertical direction, measured clockwise.  Reciprocally, for any $\theta$ such that  $-\frac{\pi}{2} \leq \theta < \frac{\pi}{2}$ we will denote by $\xi=\xi(\theta)$ the corresponing point in $\partial\DD$. 

\begin{rem}\label{directions}
The above correspondence  is such that for any point $\xi$ in $\partial\DD$ the direction $\theta=\theta(\xi)$ on the translation surface $S$ is  such that  $-\frac{\pi}{2} \leq \theta < \frac{\pi}{2}$ and the lift $g_t^\theta$ of the geodesic flow to $T^1 \DD$ satisfies 
\[
	\lim_{t\to + \infty} g_t^\theta \cdot S=\xi, \qquad \text{where }  g_t^\theta = r_{\theta}^{-1} g_t r_{\theta}.
\]
\end{rem}
\noindent In particular, the geodesic $(g_t S)_{t\in \RR}$ converges to the point $e^{\frac{\pi}{2}i}=(0,1) \in \partial \DD$, while $(g_t^\theta S)_{t\in \RR}$ converges to the point $e^{( \frac{\pi}{2}+2\theta)i} \in \partial \DD$, which, as $\theta$ changes from $-\frac{\pi}{2}$ to $\frac{\pi}{2}$, rotates clockwise from $(0,-1)$ exactly once around $\partial \DD$.

\subsection{Cutting sequences and boundary expansions}\label{subsection:boundary}
For a special class of Fuchsian groups, Bowen and Series developed a geometric method of symbolic coding of points on $\partial \DD$, known as \emph{boundary expansions}, that allows to represent the action of a set of suitably chosen generators of the group as a subshift of finite type. 
Boundary expansions  can be thought of as a geometric generalization of the continued fraction expansion, which is related to the boundary expansion of the geodesic flow on the modular surface (see~\cite{Series:Modular} for this connection).  
We will now recall two  equivalent definitions of the simplest case of boundary expansions, either as cutting sequences of geodesics on $X=\HH/\Gamma$ or as itineraries of expanding maps on $\partial \mathbb{D}$.
For more details and a more general treatment we refer to the expository introduction to boundary expansions given by Series in~\cite{Series:TS}.

Let  $\Gamma \subset \PSL(2,\RR)$ be a Fuchsian group, namely a discrete groups of hyperbolic isometries.
Since hyperbolic isometries are given by M\"obius transformations, we will identify a hyperbolic isometry with the matrix $G \in \PSL(2, \RR)$ which give the transformation. Assume that $\Gamma$ be a co-finite, non cocompact and does not contain elliptic elements.
Equivalently, assume the quotient $X= \mathbb{H}/\Gamma$ is a smooth, non compact, hyperbolic surface with finite volume.
One can see that $\Gamma$  admits a fundamental domain which is an \emph{ideal polygon} $D$ in $\DD$, that is a hyperbolic polygon having finitely many vertices $\xi$ all lying on $\partial\DD$ (see for example Tukia~\cite{Tukia}).
We will denote by $e$ the sides of $D$, which are geodesic arcs with endpoints in $\partial\DD$.
Geodesic sides appear in pairs, i.e.\ for each $e$ there exists a side $\overline{e}$ and an element $G$ of $\Gamma$ such that the image $G( e)$ of $e $ by $G$ is $\overline{e}$. Let  $2d$ ($d\geq 2$) be the number of sides of of $D$. 
Let $\cA_0$  be a finite alphabet of cardinality $d$ and label the $2d$-sides ($d\geq 2$) of $D$ by letters in 
\[
	\cA = \cA_0 \cup \overline{\cA_0} = \{ \alpha \in \cA_0\} \cup \{ \overline{\alpha}, \alpha \in \cA_0\} 
\]
in the following way.
Assign to a side $e$ an internal label $\alpha$ and an external one $\overline{\alpha}$.
The side $\overline{e}$ paired with $e$ has $\overline{\alpha}$ as internal label and $\alpha$ as the external one.
We then  see that the pairing given by $G (e) =\overline{e}$ transports coherently the couple of labels of the side $e$ onto the couple of labels of the side $\overline{e}$. Let us denote by $e_\alpha$  the side of $D$ whose \emph{external} label is $\alpha$. 
A convenient set of generators for $\Gamma$ is given by the family of isometries $G_\alpha \in \PSL(2, \RR)$ for $\alpha\in\cA_0$, where $G_\alpha$ is the isometry which sends the side $e_{\overline{\alpha}}$ onto the side $e_{\alpha}$, and their inverses $G_{\overline{\alpha}}:= G_\alpha^{-1}$ for $\alpha\in\cA_0$, such that $G_\alpha^{-1}(e_{\alpha})=e_{\overline{\alpha}}$.  
Thus, $\cA$ can be thought as the set of labels of generators, see Figure~\ref{fig:fundomain}.
It is convenient to define an involution on $\cA$ which maps $\alpha \mapsto \overline{\alpha} $ and $\overline{\alpha}  \mapsto  \overline{\overline{\alpha}} = \alpha$. 

\begin{figure}
\centering
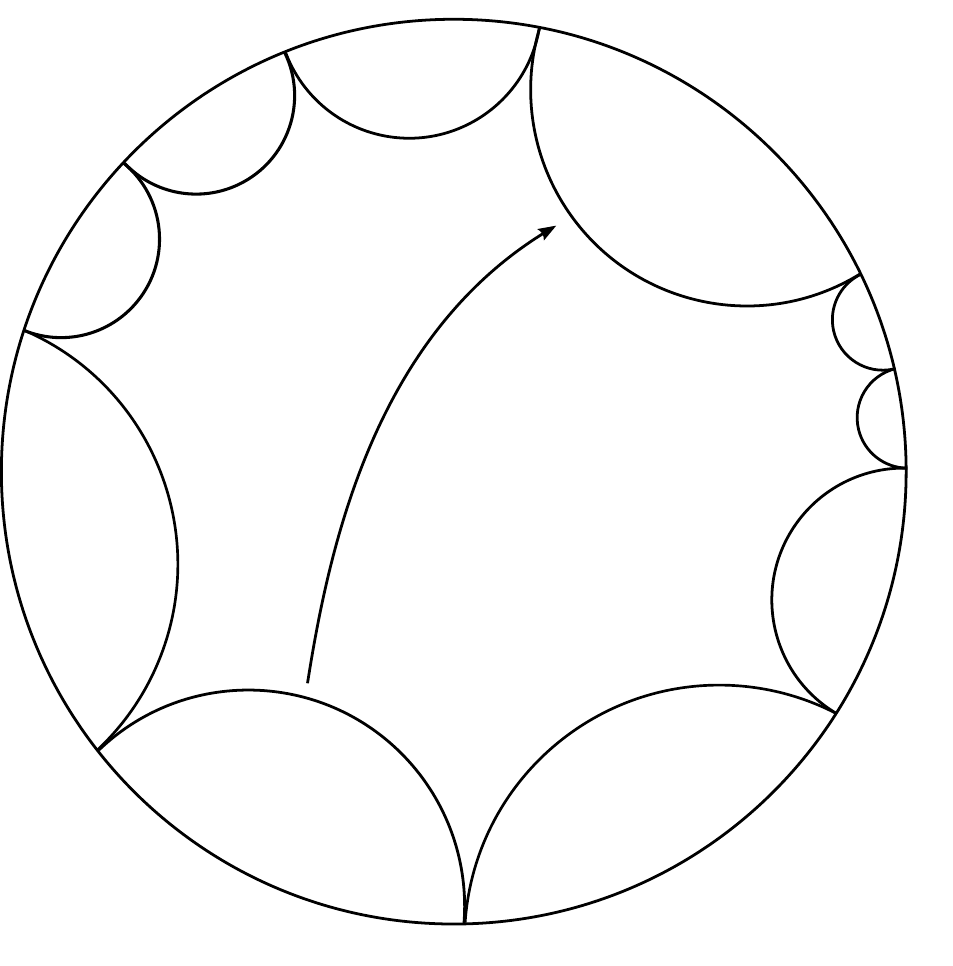
\caption{A hyperbolic fundamental domain, with sides labelling and the action of the generator $G_\alpha$.}
\label{fig:fundomain}
\end{figure}

Since $D$ is an ideal polygon, $\Gamma$ is a free group.
Hence every element of $\Gamma$ as a unique representation as a \emph{reduced word} in the generators, i.e.\ a word in which an element is never followed by its inverse.
We transport the internal and external labelling of the sides of $D$ to all its copies in the tessellation by ideal polygons given by all the images $G( D)$ of $D$ under  $G \in \Gamma$.
We label a side of a copy $G( D)$ of $D$ with the labels of the side $G^{-1}(e) \in \partial D$.
Remark that this is well defined since we have assigned an internal and an external label to each side of $D$, and this takes into account the fact that every side of a copy $G (D)$ belongs also to another adjacent copy $G'(D)$. Let  $\gamma$  be a hyperbolic geodesic ray, starting from the center $O$ of the disk and ending at a point $\xi \in \partial \DD$.
The \emph{cutting sequence} of $\gamma$ is the infinite reduced word obtained by concatenating the \emph{exterior} labels of the sides of the tessellation crossed by $\gamma$, in the order in which they are crossed.
In particular, if the cutting sequence of $\gamma$ is $\ab{0}, \ab{1}, \dots$, the $i^{th}$ crossing along $\gamma$ is from the region $G_{\ab{0}} \dots G_{\ab{i-1}} (D)$ to $G_{\ab{0}} \dots G_{\ab{i}}(D)$ and the sequence of sides crossed is
 \[
 	G_{\ab{0}} G_{\ab{1}} \cdots G_{\ab{n-1}} (e_{\ab{n}}), \qquad n \in \NN.
 \]
Remark that, since two distinct hyperbolic geodesics meet at most in one point, a word arising from a cutting sequence is reduced.
In other words, hyperbolic geodesics \emph{do not backtrack}. 
More in general, one can associate cutting sequences to any oriented piece of a hyperbolic geodesics and describe a cross-section of the geodesic flow in terms of the shift map on boundary expansions (see for example Series~\cite{Series:Modular}), 
but we will not need it in this paper.
  
Let us now explain how to recover cutting sequences of geodesic rays by itineraries of an expanding map on $\partial \DD$.
The action of each $\g \in \Gamma$ extends by continuity to an action on $\partial \DD$ which will be denoted by $\xi \mapsto \g(\xi)$.   
Let $\Arc [\alpha]$ be the shortest (closed) arc on $\partial\DD$ which is cut off by the edge $e_{\alpha}$ of $D$. 
Then it is easy to see from the geometry that the action $G_\alpha \colon \partial \DD \to \partial \DD$ associated to the generator $G_\alpha$ of $\Gamma$ sends the complement of $\Arc [\overline{\alpha}]$ to $\Arc [\alpha]$. Moreover, if for each $\alpha \in \cA$ we denote by $\xi_\alpha^l$ and $\xi_\alpha^r$ the endpoints of the side $e_\alpha$, with the  convention that the right follows the left moving in clockwise sense on $\partial \DD$, we have 
\beq \label{action_g_on_labels}
	G_\alpha(\xi_{\overline{\alpha}}^r)=\xi_\alpha^l
	\qquad \text{and} \qquad
	G_\alpha(\xi_{\overline{\alpha}}^l)=\xi_\alpha^r.
\eeq  
Let $\Arc = \bigcup_{\alpha} \stackrel{\circ}{ \Arc[\alpha]}\subseteq \partial \DD$, where $\stackrel{\circ}{ \Arc[\alpha] }$ denotes the arc $\Arc[\alpha]$ without endpoints. 
Define $\f \colon \Arc \to \partial\DD$ by 
\[
	\f(\xi) = G_{\alpha}^{-1}(\xi), \qquad \text{if } \xi \in \stackrel{\circ}{ \Arc[\alpha]}. 
\]
Let us call a point $\xi \in \partial D$ \emph{cuspidal} if it is an endpoint of the ideal tessellation with fundamental domain $D$, \emph{non-cuspidal} otherwise. One can see that $\xi$ is non-cuspidal point if and only if $\f^n(\xi)$ is defined for any $n \in \NN$.  
One can \emph{code} a trajectory $\{ \f^n(\xi), n \in \NN\}$ of a non-cuspidal point $\xi \in \partial \DD$ with its \emph{itinerary} with respect to the partition into arcs $\{ \Arc [\alpha], \alpha \in \cA \}$, that is by the sequence $(\ab{n})_{n \in \NN}$, where $\ab{n} \in \cA$ are such that  $\f^n (\xi) \in \Arc [\ab{n}]$ for any $n \in \NN$. We will call such sequence the \emph{boundary expansion} of $\xi$. Moreover, if $\theta = \theta(\xi)$ (see Remark~\ref{directions}),
in analogy with the continued fraction notation, we will write 
\[
	\theta \peq{[\ab{0}, \ab{1}, \dots]}. 
\]
When we write the above equality or say that $\xi$ has boundary expansion $(\ab{n})_{n \in \NN}$ we implicitely assume that $\xi=\xi(\theta)$ is non-cuspidal. 
 One can show that 
  the only restrictions on letters which can appear in a boundary expansion $(\ab{n})_{n \in \NN}$  is that $\alpha$ cannot be followed by $\overline{\alpha}$, that is
\begin{equation}\label{eqnobacktrack}
\ab{n+1}\not=\overline{\ab{n}}	 \qquad \text{ for any  $n\in\NN$}.
\end{equation}
We will call this property the \emph{no-backtracking condition}\footnote{See the very end of this section for the reason for the choice of this name}.
Boundary expansions can be defined also for cuspidal points (see Remark~\ref{boundary_cuspidal}) but are unique exactly for non-cuspidal points. Every  sequence in $\cA^{\NN}$ which satisfies the no-backtracking condition can be realized as a boundary expansion (of a cuspidal or non-cuspidal point).  

We will adopt the following notation. Given a sequence of letters $\ab{0}, \ab{1},  \dots, \ab{n}$, let us denote by
\[
	\Arc [\ab{0}, \ab{1},  \dots, \ab{n}] =\overline{ \Arc [\ab{0}] \cap \f^{-1}( \Arc [ \ab{1}] )\cap  \dots \cap \f^{-n}(\Arc [\ab{n}])}
\]
the closure of  set of points on $\partial\DD$ whose boundary expansion starts with  $\ab{0}, \ab{1},  \dots, \ab{n}$.
One can see that $\Arc[\ab{0},   \dots, \ab{n}]$ is a connected arc on $\partial \DD$ which is non-empty exactly when the sequence satisfies the no-backtracking condition~\eqref{eqnobacktrack}.
From the definition of $\f$, one can work out that
\beq \label{arcexpression}
	\Arc [\ab{0}, \ab{1},  \dots, \ab{n}] = G_{\ab{0}} \dots G_{\ab{n-1}} \Arc [\ab{n}].
\eeq 
Thus  two such arcs are \emph{nested} if one sequence contains the other as a beginning. For any fixed $n\in \NN$, the arcs of the form $\Arc[\ab{0}, \ab{1},  \dots, \ab{n}]$, where $\ab{0}, \ab{1},  \dots, \ab{n}$ vary over all possible sequences of $n$ letters in $\cA$ which satisfy the no-backtracking condition, will be called an \emph{arc of level} $n$.  
To produce the arcs of level $n+1$, each arc of level $n$ of the form $\Arc[\ab{0}, \ab{1},  \dots, \ab{n}]$ is partitioned into $2d-1$ arcs, each of which has the form $\Arc[\ab{0}, \ab{1},  \dots, \ab{n+1}]$  for $\ab{n+1} \in \cA\setminus \{\overline{\ab{n}}\}$.
Each one of these arcs corresponds to one of the arcs cut out by the sides of the ideal polygon $\ab{0}\ab{1}  \dots \ab{n} D$ and contained in the previous arc $\Arc[\ab{0}, \ab{1},  \dots, \ab{n}]$.  
One can show that if $\theta \peq{[\ab{0}, \ab{1}, \dots]}$ one has that 
\beq \label{shrink}
	\xi(\theta)=	\bigcap_{n\in\NN} G_{\ab{0}}\dots G_{\ab{n}} \Arc [\ab{n+1}].
\eeq
Thus, \emph{the cutting sequence of the ray which starts at the origin and ends at a non-cuspidal $\xi$ gives the entries of the boundary expansion of $\theta(\xi)$}.
Notice that the combinatorial no-backtracking condition~\eqref{eqnobacktrack} corresponds to the no-backtracking geometric phenomenon between hyperbolic geodesics we mentioned earlier.  

\section{Lagrange values via boundary expansions}\label{sec:Lagrangeboundary}
In this section we show that one can use boundary expansions to study Lagrange spectra of Veech translation surfaces. 
From now on, let $S$ be a fixed Veech translation surface and let $\cL:= \cL(S)$ denote its Lagrange spectrum. 

\subsection{Boundary expansions for Veech surfaces}
Let us consider a subgroup $\Gamma$ of finite index in the Veech group $V(S)$ that has no torsion elements, i.e.\ such that it does not contain elliptic elements. 
Then  $\Gamma$  admits a fundamental domain for the action on $\mathbb{D}$ which is an \emph{ideal} polygon $D$, that is an hyperbolic polygon having finitely many vertices $\xi_1, \dots, \xi_{2d}$ all lying on $\partial\DD$, see~\cite{Tukia}.
The domain $D$ is a finite cover of a fundamental domain for $V(S)$ and hence the induced tesselation of the hyperbolic plane has tiles which are finite union of the original tiles.
We will now use the tessellation given by $D$ to code geodesics on $\DD$ and to study flat geodesics on $S$. 
Let us stress that we do \emph{not} pass to a \emph{finite cover} of the surface $S$, which is fixed and has $V(S)$ as its Veech group.
We simply code geodesics in $\DD$ according to a tessellation of the disk that is more suited to our purposes than the one we would obtain from $V(S)$.

\begin{rem}
One can obtain an ideal polygon $D$ in a \emph{canonical} way, which is related to the flat geometry of the lattice surface $S$.
We recall that the \emph{spine} $\Pi$ of a translation surface is the subset of $\DD$ formed by the (isometry equivalence classes\footnote{We say that $S$ and $S'$ are \emph{isometric} if they are affinely equivalent by an affine map $\Psi:S \to S'$ with $D\Psi\in \operatorname{O} (2)$. The property of belonging to the spine is clearly invariant by isometry, so it is well defined on isometry classes of surfaces affinely isomorphic to $S$, which are in one to one correspondence to points in $\DD$, see for example~\cite{SmillieUlcigrai}.} of) surfaces which are affinely isomorphic to $S$ and that have two non-parallel minimal length saddle connections.
Then $\Pi$ is a deformation retract of $\DD$ (see~\cite{SmillieWeissTriangles}) and hence it is simply connected, which implies that $\Pi$ is a \emph{tree}. 
Its vertices are the surfaces with at least three, pairwise non-parallel, minimal length saddle connections.
If we consider the tessellation dual to $\Pi$, we obtain a tessellation of the hyperbolic disk into ideal polygons.
The spine is invariant under the action of the Veech group $V(S)$ but the tiles in the dual tessellation are permuted under its action.
Since all non-identity elements of $V(S)$ that fix a tile have to have finite order, we can quotient the Veech group to get a torsion-free group $\Gamma$ (that amounts to glueing some tiles of the tessellation together to obtain the ideal polygon domain $D$).
\end{rem}

From now on, we suppose that we have fixed a choice of a domain $D$, which is an ideal polygon with $2d$ sides.
As before, we will let $\cA_0$ denote an alphabet with $d$ symbols and let $\cA = \cA_0 \cup \overline{\cA_0}$. For each $\alpha \in \cA$ we choose a fixed representative $G_\alpha \in \SL(2,\RR)$ of the generator $G_\alpha \in \Gamma \subset \PSL(2,\RR)$. 
Thanks to the identification of the Teichm\"uller disk of $S$ and $T^1 \DD$ described in the previous section, we can use the boundary expansions to code affine deformations of the surface $S$ itself.

\subsection{Wedges associated to boundary expansions}\label{sec:wedges}
In this section, we first show how to associate to the boundary expansion a collection of saddle connections, which are obtained acting  by the linear action associated to boundary expansions on $d$ pairs of displacement vectors of saddle connections, that we call \emph{wedges}.
We show that, in order to evaluate large values of the Lagrange spectrum, it is enough to know the areas of these collection of saddle connections.
We will then show in \S~\ref{sec:renormalized_formula} that these areas can be expressed by a convenient formula. 

For any vertex $\xi_i$, $1\leq i \leq 2d$ of $D$ consider the corresponding direction $\theta_i:= \theta(\xi_i)$ on the translation surface $S$, see Remark~\ref{directions}. It is well known (see for example~\cite{HS}) that since $\xi_i$ corresponds to a cusp of $\DD/\Gamma$, $\theta_i$ is a parabolic direction on the Veech surface $S$ and hence the surface $S$ has a cylinder decomposition in direction $\theta_i$. 
Let $\cD$ be the collection of all displacement vectors of all saddle connections which belong to the boundaries of the cylinders of the cylinder decompositions in directions $\theta_i$, for $1\leq i \leq 2d$.  
This finite set of saddle connections generates all other saddle connections, in the sense that for any  saddle connection $u$ on $S$ there exists a saddle connection $v \in \cD$ and a reduced word $g \in \Gamma$ in the generators such that $u$ is the image of $ v$ under the linear action of $g$ on $S$.

In order to analyse small areas of saddle connections, it is enough to consider a smaller set of saddle connections.
Namely, for each cylinder decomposition, we pick only the shortest saddle connection which belongs to the cylinders boundaries.
It is convenient to pair these saddle connections in \emph{wedges} as follows.  
Recall that, for each side $e_\alpha$ in $D$ labelled by $\alpha \in \cA$, we call $\xi_\alpha^l$ and $\xi_\alpha^r$ the left and right endpoint of $e_\alpha$.
Let $\theta_\alpha^l$  and $\theta_\alpha^r$ the the corresponding pair of parabolic directions.

\begin{defi}[Basic wedges]
For any $\alpha\in \cA$, let  $v^r_\alpha$ and $v^l_\alpha$  be the displacement vectors of the shortest saddle connections  in the directions $\theta^r_\alpha$ and $\theta^l_\alpha$ respectively.
We will call \emph{wedge} and denote by $W_\alpha$ the basis of $\RR^2$ formed by the ordered pair of displacement vectors  $(v^r_\alpha, v^l_\alpha)$. 
We will identify $v^r_\alpha, v^l_\alpha$ with the corresponding column vectors  which have as entries the coordinates $(\Re(v^r_\alpha), \Im(v^r_\alpha))$  and $(\Re(v^l_\alpha), \Im(v^l_\alpha)) $ with respect to the standard base of $\RR^2$. 
We will often abuse the notation and denote by $W_\alpha$ also the matrix in $\GL(2, \RR)$ that has as columns the two vectors $(v^r_\alpha, v^l_\alpha) $, that is
\beq 
	W_\alpha = \begin{pmatrix} 
				\Re(v^r_\alpha) 	& 	\Re(v^l_\alpha) \\ 
				\Im(v^r_\alpha) 	& 	\Im(v^l_\alpha) 
			\end{pmatrix}.
\eeq
\end{defi}

Remark that $v_\alpha^r$ is the first vector and $v_\alpha^l$ is the second of the basis in order to give it positive orientation, consistently with the orientation of $\partial\DD$.
If $v$ is a displacement vector for some saddle connection on $S$ then $-v$ also is one.
Hence we define $-W_\alpha$ as the wedge formed by the vectors $-v_\alpha^r$ and $-v_\alpha^l$. 
Modulo replacing $S$ by some element in its $\SL (2, \RR)$-orbit, we assume that all the $W_\alpha$ are in the upper half-plane $\RR\times\RR_+$ and all the $-W_\alpha$ are in the lower half-plane $\RR\times\RR_-$.

We want to use the boundary expansion of directions in $\partial \DD$ to obtain a way of approximating directions in $\RR \PP^1$ with the directions of  a sequence of wedges (see Lemma~\ref{properties_boundary} below).
For $\alpha\in\cA$ the generators $G_\alpha$ of $\Gamma$ act linearly on $\RR^2$ as elements of $\slduer$.
For any $\alpha$ in $\cA$ consider the cone $\cW_\alpha$ in $\RR^2$ spanned by $W_\alpha$, that is the set of vectors
\[
	v=\pm(av_\alpha^r+bv_\alpha^l), \qquad \text{ with $a\geq0$ and $b\geq 0$.}
\]
Let $\cU_\alpha$ be the complement of $\cW_{\overline{\alpha}}$, namely $\RR^2\setminus\cW_{\overline{\alpha}}$, which is of also a cone in $\RR^2$.
By definition of the boundary expansion, we have
\[
	G_\alpha \cU_\alpha =\cW_\alpha, \qquad \text{ for any } \alpha,
\]
as shown in Figure~\ref{fig:Gaction}.

\begin{figure}
\centering
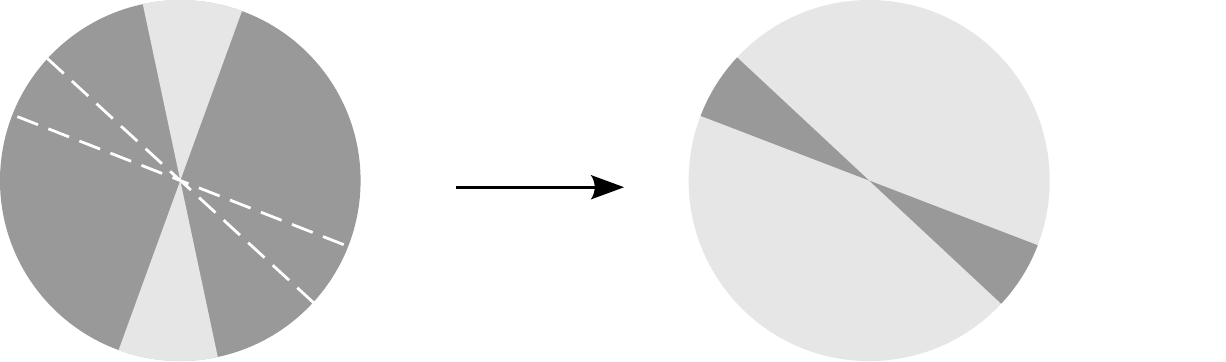
\caption{On the left, the cones $\cW_\alpha$, $\cU_\alpha$ and $\cW_{\overline{\alpha}}$. On the right, the image under $G_\alpha$ of $\cU_\alpha$ and $\cW_{\overline{\alpha}}$.}
\label{fig:Gaction}
\end{figure}

Consider a direction $\theta$ corresponding to a point $\xi(\theta)$ in $\partial\DD$ which is not a cusp and let $(\ab{n})_{n\in\NN}$ be the cutting sequence given by the boundary expansion, where $\ab{n}\in\cA$ for any $n$.
Define a sequence of wedges $\W{n}{\theta}$ setting
\begin{equation}\label{eq:wedgesfromboundaryexpansion}
\left\{ \begin{array}{ll}
	\W{0}{\theta} &:=W_{\ab{0}},\\
	\W{n}{\theta} &:= G_{\ab{0}}\dots G_{\ab{n-1}} W_{\ab{n}}, \qquad n \in \NN_+.
\end{array}
\right.
\end{equation}
Write $\W{n}{\theta}=\{ \vv{n}{r}, \vv{n}{l}\}$ and denote by $\theta(\vv{n}{r})$ and $\theta(\vv{n}{l})$ the directions in $\RR \PP^1$ of the displacement vectors $\vv{n}{r}$ and $\vv{n}{l}$ respectively.
We say that a \emph{saddle connection}  $v$ \emph{is in} the wedge $\W{n}{\theta}$  if $v =\vv{n}{r}$ or $v= \vv{n}{l}$ and moreover that a \emph{direction} $\eta$ \emph{belongs to a wedge} $(\vv{n}{r},\vv{n}{l})$ if $\theta(\vv{n}{r})\leq \eta\leq  \theta(\vv{n}{l})$.

For any $n$ let $\cW_n$ be the cone spanned by the wedge $\W{n}{\theta}$, that is the set of vectors of the form
\[
	v=\pm(a\vv{n}{r}+b\vv{n}{l}), \qquad \text{ with $a$, $b \geq 0$}.
\]
Observe that the cones $ \cW_n$ are \emph{nested}, that is we have $\cW_{n+1}\subset \cW_n$ for any $n$.
The same does not necessarily holds for the wedges $\W{n}{\theta}$, but this is not a problem, since we are interested only in the directions $\theta(\vv{n}{l})$ and $\theta(\vv{n}{r})$  of the vectors $\vv{n}{r}$ and $\vv{n}{l}$ in $\RR \PP ^1$ which converge monotonously to the direction $\theta \in \RR \PP^1$. 
More precisely, by definition of the wedges $\W{n}{\theta}$ and by the properties of the boundary expansion, one can prove the following.

\begin{lem}\label{properties_boundary}
For any $\theta$, the wedges $\W{n}{\theta}=(\vv{n}{r},\vv{n}{l})$ and the associated cones $\cW_n$ are such that
\begin{enumerate}
\item The cones $\cW_n$ 
 are \emph{nested} and all contain $\theta$, that is, for any $n \in \NN$ we have
\[
	-\frac{\pi}{2}\leq	\theta(\vv{n}{r}) \leq  \theta(\vv{n+1}{r}) \leq \theta \leq \theta(\vv{n+1}{l}) \leq  \theta(\vv{n}{l}) < \frac{\pi}{2}.
\]
\item The cones $\cW_{n}$ \emph{shrink} to $\theta$, i.e.\ the directions $\theta(\vv{n}{l})$ and $\theta(\vv{n}{r})$ both converge to $\theta$ as $n$ grows.
\end{enumerate}
\end{lem}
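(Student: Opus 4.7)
The plan is to translate the claims about cones in $\RR^2$ into claims about arcs on $\partial\DD$, using the identification $\RR\PP^1 \cong \partial \DD$ described in the preliminaries. Under this correspondence, the cone $\cW_\alpha$ spanned by $W_\alpha = (v_\alpha^r, v_\alpha^l)$ projects to the arc $\Arc[\alpha]$ with endpoints $\xi_\alpha^r, \xi_\alpha^l$. More generally, since the linear action of $G_{\ab{0}} \cdots G_{\ab{n-1}}$ on $\RR^2$ corresponds to the M\"obius action on $\partial\DD$, by \eqref{arcexpression} the cone $\cW_n$ projects exactly to the arc $\Arc[\ab{0}, \ab{1}, \dots, \ab{n}]$, and its two boundary rays are generated by $\vv{n}{r}$ and $\vv{n}{l}$ in such a way that the angles $\theta(\vv{n}{r})$ and $\theta(\vv{n}{l})$ correspond to the two endpoints of that arc.

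For part (1), I would prove nesting $\cW_{n+1}\subset\cW_n$ by reducing it, after inverting the common prefix $G_{\ab{0}} \cdots G_{\ab{n-1}}$, to the single inclusion $G_{\ab{n}} \cW_{\ab{n+1}} \subset \cW_{\ab{n}}$. The no-backtracking condition $\ab{n+1} \neq \overline{\ab{n}}$ forces $\Arc[\ab{n+1}]$ to lie in $\partial \DD \setminus \Arc[\overline{\ab{n}}]$ (up to the common endpoints of neighbouring arcs), which translates into the cone-level inclusion $\cW_{\ab{n+1}} \subset \cU_{\ab{n}}$; then the identity $G_{\ab{n}} \cU_{\ab{n}} = \cW_{\ab{n}}$ closes the step. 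The fact that $\theta$ lies in $\cW_n$ is immediate from \eqref{shrink}, which places $\xi(\theta)$ in $\Arc[\ab{0}, \dots, \ab{n}]$ for every $n$. The specific ordering of the four angles will follow by tracking the left/right labels of endpoints along the orbit using \eqref{action_g_on_labels}, together with the fact that each generator $G_\alpha$, being in $\SL(2,\RR)$, preserves the cyclic orientation of $\partial\DD$.

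For part (2), shrinking is an immediate corollary of \eqref{shrink}: the nested arcs $\Arc[\ab{0}, \dots, \ab{n}]$ shrink to the single point $\xi(\theta)$, and their endpoints, which are precisely the directions $\theta(\vv{n}{r})$ and $\theta(\vv{n}{l})$ under the identification above, therefore both converge to $\theta$.

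The only real difficulty I anticipate is the careful bookkeeping of the left/right labels, since each generator interchanges them via \eqref{action_g_on_labels}; once the dictionary between cones, wedges, and boundary arcs is set up cleanly, the combinatorics of no-backtracking and the identity $G_\alpha\cU_\alpha = \cW_\alpha$ do all the work, and no new ideas beyond those developed in \S\ref{subsection:boundary} are required.
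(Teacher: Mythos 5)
Your proposal is correct, and it should be said up front that the paper provides no proof of this lemma; it is stated with the remark that "one can prove the following" from the definitions, so you are being asked to supply what the authors treated as routine. Your plan does exactly that, and the structure is the natural one: identify the cone $\cW_n$ with the arc $\Arc[\ab{0},\dots,\ab{n}]$ via \eqref{arcexpression} and the $\SL(2,\RR)$-equivariance of the correspondence $\RR\PP^1\cong\partial\DD$; get nesting by pulling back to the single-step inclusion $G_{\ab{n}}\cW_{\ab{n+1}}\subset\cW_{\ab{n}}$, which follows from the no-backtracking condition combined with $G_\alpha\cU_\alpha=\cW_\alpha$; get $\theta\in\cW_n$ and the shrinking of the cones directly from \eqref{shrink}. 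All of these steps are sound.

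Two minor points you should tighten if you write this out in full. First, $\cW_{\ab{n+1}}$ need not lie in the open cone $\cU_{\ab{n}}=\RR^2\setminus\cW_{\overline{\ab{n}}}$: adjacent cones may share a boundary ray (exactly when $\Arc[\ab{n+1}]$ and $\Arc[\overline{\ab{n}}]$ share an endpoint). You should work with closures, which is harmless since all the inequalities in the lemma are non-strict. Second, and more importantly, you explicitly defer the identification of which vector corresponds to which endpoint to "tracking the left/right labels." This is precisely where the paper's several interacting sign conventions live (angle measured clockwise from vertical, positive orientation of the basis $(v^r_\alpha,v^l_\alpha)$, the left/right endpoint convention on $\partial\DD$, the clockwise parametrization in Remark~\ref{directions}), and it is the only place a sign error could slip in. A fully convincing write-up would carry out that bookkeeping explicitly, for instance by checking the $n=0$ case against the conventions and then noting that every $G_\alpha$ preserves the cyclic order on $\partial\DD$ and sends $\xi^r_{\overline\alpha}\mapsto\xi^l_\alpha$, $\xi^l_{\overline\alpha}\mapsto\xi^r_\alpha$ by \eqref{action_g_on_labels}.
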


\subsection{Boundary expansion detects large values of the spectrum}
We will now show (see Theorem~\ref{boundary_see_largeL} below), that  the areas of the saddle connections in the wedges defined in the previous section allow to compute large values of the spectrum.
Let us first introduce some notation. Let $\cD_0 $ be the set of displacement vectors $v^l_\alpha$ or $v^r_\alpha$ appearing in the wedges $W_{\alpha}$ for $\alpha\in \cA$, namely
\[
	\cD_0 =  \{ v_\alpha^l, \alpha \in \cA\} = \{ v_\alpha^r, \alpha \in \cA\} .
\]
For $n\geq1$ let $\cD_n$ be the set of $u$ in $\RR^2$ of the form $u=\g_{\ab{1}}\dots \g_{\ab{n}}v$, where $\ab{1}, \dots \ab{n}$ is any sequence of letters in $\cA$ satisfying the no-backtracking condition~\eqref{eqnobacktrack} and $v\in\cD_0$. 
Remark that  $\cD_n$ is exactly the set of displacement vectors of saddle connections which can appear in a wedge of the form $\W{n}{\theta}(\theta)$ for some $\theta$. 
For any $n\geq0$ set
\beq\label{Mndef}
	M_n := \max \{ \abs{v} , v \in \cD_n \},
\eeq
where $\abs{v}$ denotes the Euclidean length of a vector $v$ in $\RR^2$.

\begin{thm}\label{boundary_see_largeL}
For any Veech surface $S$ there exists a constant $L_0=L_0(S)>0$ such that for any $\theta$ such that $L(\theta)>L_0$ we have 
\[
	L(\theta) = \frac{1}{\liminf_{n \to \infty} \areaW{\theta}{\W{n}{\theta}}}, \qquad \text{where} \quad  \areaW{\theta}{\W{n}{\theta}}= \min \{ \area{\theta}{\vv{n}{r}}, \area{\theta}{\vv{n}{l}} \}
\]
and  $(\W{n}{\theta})_{n \in \NN}$ is the sequence of wedges $\W{n}{\theta}=(\vv{n}{r},\vv{n}{l})$  associated to $\theta$.
Moreover, we can take
\[
L_0(S) = \frac{ 4 M_0 M_1 }{c(S)^2},
\]
where $c(S)$ is the constant defined below in Lemma~\ref{Veech:lemma} and $M_0$, $M_1$ are defined by~\eqref{Mndef}.
\end{thm}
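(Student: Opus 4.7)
\emph{Strategy.} The stated equality splits into two inequalities. The easy direction $L(\theta) \geq 1/\liminf_n \areaW{\theta}{\W{n}{\theta}}$ will hold for every $\theta$ by showing that the wedge vectors $\vv{n}{r}, \vv{n}{l}$ are admissible candidates in the liminf defining $L(\theta)^{-1}$. The hard direction $L(\theta) \leq 1/\liminf_n \areaW{\theta}{\W{n}{\theta}}$ uses the hypothesis $L(\theta) > L_0$ and proceeds by contrapositive: I would show that every saddle connection $v$ with $\area{\theta}{v} < 1/L_0$ and $|\Im_\theta(v)|$ large enough must coincide with some $\vv{n}{r}$ or $\vv{n}{l}$.

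\emph{Easy direction.} The vectors $\vv{n}{r}, \vv{n}{l}$ are saddle connections on $S$ since they belong to the Veech-group orbit of the basic wedge vectors via~\eqref{eq:wedgesfromboundaryexpansion}. By Lemma~\ref{properties_boundary} their directions converge monotonously to $\theta$. Since $\xi(\theta)$ is non-cuspidal, the sequence $G_{\ab{0}} \cdots G_{\ab{n-1}}$ leaves every compact subset of $\slduer$ by discreteness of $\Gamma$, so $|\vv{n}{r}|, |\vv{n}{l}| \to \infty$; combined with the convergence of directions this forces $|\Im_\theta(\vv{n}{r})|, |\Im_\theta(\vv{n}{l})| \to \infty$, making the wedge vectors admissible in the liminf.

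\emph{Hard direction via Teichm\"uller flow.} Apply the flow $g_t^\theta$ of Remark~\ref{directions} to $v$. Setting
\[
    t_0 := \log \frac{|\Im_\theta(v)|}{|\Re_\theta(v)|},
\]
the two components of $g_{t_0}^\theta v$ along $\theta$ and $\theta_\perp$ are equal, so that
\[
    |g_{t_0}^\theta v|^2 = 2\, \area{\theta}{v} < 2/L_0 = c(S)^2 / (2 M_0 M_1).
\]
Hence $g_{t_0}^\theta \cdot S$ carries a saddle connection of Euclidean length well below $c(S)$. By Remark~\ref{directions} the lift of $(g_t^\theta \cdot S)_t$ to $\DD$ is the hyperbolic geodesic converging to $\xi(\theta)$, so its cutting sequence coincides with the boundary expansion $(\ab{n})_{n \in \NN}$ of $\theta$; at time $t_0$ it lies inside the tile $G_{\ab{0}} \cdots G_{\ab{n-1}} D$ for a uniquely determined $n = n(t_0)$. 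Lemma~\ref{Veech:lemma} then forces any saddle connection of length below $c(S)$ on a surface in a tile to come from one of the tile's cuspidal directions. Combining this with~\eqref{arcexpression} and~\eqref{shrink}, which identify the exit side of the current tile along the geodesic with $\ab{n}$, and with the no-backtracking constraint~\eqref{eqnobacktrack} ruling out the entry-side cusp, uniquely pins down $v = G_{\ab{0}} \cdots G_{\ab{n-1}} v_{\ab{n}}^{?}$ with $? \in \{r,l\}$, which by~\eqref{eq:wedgesfromboundaryexpansion} is precisely $\vv{n}{r}$ or $\vv{n}{l}$.

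\emph{Main obstacle.} The principal difficulty is the identification inside the cusp argument: among the several cuspidal vertices of the tile $G_{\ab{0}} \cdots G_{\ab{n-1}} D$, only the two adjacent to the exit side labelled $\ab{n}$ are genuinely relevant, and the remaining lateral cusps must be ruled out quantitatively. This is precisely where the explicit form $L_0 = 4 M_0 M_1 / c(S)^2$ enters: $c(S)^2$ encodes the systole threshold in Lemma~\ref{Veech:lemma}, while $M_0 M_1$ controls the sizes of basic wedge vectors at the first two levels and guarantees both that $n(t_0) \geq 1$ (the argument genuinely concerns a cusp reached by the expansion) and that small-area saddle connections cannot come from cusps lying off the boundary-expansion path.
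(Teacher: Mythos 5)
Your easy direction is fine (though the paper dispatches it even faster by noting that $L(\theta)^{-1}$ is a $\liminf$ over a superset of the wedge vectors, so no growth argument is needed). Your hard direction, however, takes a genuinely different route from the paper and, as written, has a real gap precisely where you flag the ``main obstacle''.

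You propose to renormalize a short-area saddle connection $v$ via $g_{t_0}^\theta$, observe that $g_{t_0}^\theta\cdot S$ has a short saddle connection, locate the time $t_0$ in some tile $G_{\ab{0}}\cdots G_{\ab{n-1}}D$, and then ``force'' $v$ to be a cuspidal vector of that tile using Lemma~\ref{Veech:lemma}. But Lemma~\ref{Veech:lemma} is not a systole statement: it says that two saddle connections in distinct directions have angular separation $\geq c/(|\gamma||\sigma|)$. It does not, by itself, identify which cusp of the tile a short saddle connection belongs to, nor does it say that every short saddle connection comes from a cuspidal direction of the current tile. The claim ``Lemma~\ref{Veech:lemma} then forces any saddle connection of length below $c(S)$ \dots to come from one of the tile's cuspidal directions'' is not what the lemma says and would require a separate argument. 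Your ``Main obstacle'' paragraph correctly identifies this as the crux, but the paragraph only asserts that the constants $L_0=4M_0M_1/c(S)^2$ should make it work, without actually producing the estimate; in effect the heart of the proof is left as a gesture. There is also a secondary issue: even if $v$ lies in a cuspidal direction of the tile, it need not equal $\vv{n}{r}$ or $\vv{n}{l}$, since those are the \emph{shortest} saddle connections in their respective directions, and this minimality needs to be invoked.

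The paper avoids the Teichm\"uller-flow reduction entirely. Its quantitative engine is Lemma~\ref{area_control}, a purely flat-geometric bound: if the direction of $v$ falls strictly between two consecutive wedge directions $\theta(\vv{n}{r})$ and $\theta(\vv{n+1}{r})$ (or the $l$-analogue), then $\area{\theta}{v}\geq c(S)^2/(4M_0M_1)=1/L_0$. This lemma is itself proved by renormalizing with the group element $G_{\ab{0}}\cdots G_{\ab{n-1}}$ (not the flow $g_{t_0}^\theta$), writing the area as that of a parallelogram, and applying Lemma~\ref{Veech:lemma} twice to lower-bound the two angles at the diagonal. Given Lemma~\ref{area_control} with $N=1$, the conclusion is immediate: any minimizing sequence $(v_k)$ with $\area{\theta}{v_k}\to 1/L(\theta)<1/L_0$ must eventually consist of wedge vectors, since any non-wedge saddle connection in the initial cone is barred by the bound. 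To fix your argument you would essentially need to prove a version of Lemma~\ref{area_control}; as it stands, your proposal replaces that lemma with an unproved cusp-identification claim.
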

We will actually show in the next section that one can define an acceleration of the boundary expansion map (see \S~\ref{sec:acceleration}) so that the corresponding subsequence of wedges still allows to compute large values of the spectrum (see Theorem~\ref{thmgaussdetects}). 
The rest of this section is devoted to the proof of Theorem~\ref{boundary_see_largeL}. The proof is based on the following two lemmas. 

\begin{lem}\label{Veech:lemma}
For a Veech surface $S$ there exists a positive constant $c=c(S)$ such that if $\sigma$ and $\gamma$ are two saddle connections of lengths $|\sigma|$ and $|\gamma|$ in directions $\theta(\sigma)$ and $\theta(\gamma)$ respectively, then we have
\[
	\abs{\theta(\gamma)-\theta(\sigma)} > \frac{c}{\abs{\gamma} \cdot \abs{\sigma}}.
\]
\end{lem}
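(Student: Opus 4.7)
The plan is to reduce the inequality to a uniform lower bound on the absolute wedge product $|\sigma \wedge \gamma|$ and then to obtain that bound using the cylinder decompositions guaranteed by the Veech dichotomy. Since directions live in $\RR\PP^1$, I may assume $|\theta(\gamma)-\theta(\sigma)| \leq \pi/2$ (otherwise the inequality is automatic for any reasonable $c$), so that $|\sin(\theta(\gamma)-\theta(\sigma))| \leq |\theta(\gamma)-\theta(\sigma)|$. Thus it is enough to produce $c=c(S)>0$ such that
\[
|\sigma \wedge \gamma| = |\sigma|\cdot|\gamma|\cdot|\sin(\theta(\gamma)-\theta(\sigma))| \geq c(S)
\]
for every pair of non-parallel saddle connections on $S$.

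The first step applies the Veech dichotomy: since $\sigma$ is a saddle connection, the direction $\theta(\sigma)$ must be parabolic, so $S$ decomposes as a finite union of cylinders in direction $\theta(\sigma)$. Denote by $h_{\min}(\theta(\sigma))$ the smallest cylinder height in that direction and by $\ell_{\min}(\theta(\sigma))$ the length of the shortest saddle connection in direction $\theta(\sigma)$. Rotating coordinates so that $\sigma$ becomes horizontal, the endpoints of $\gamma$ are singular points lying on horizontal cylinder boundaries. Because $\gamma$ is not horizontal, the trajectory along $\gamma$ starting at a singularity must enter the interior of a cylinder and traverse its full height before reaching the next singularity (possibly crossing several cylinders in succession). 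Hence the component $\gamma_\perp$ of $\gamma$ perpendicular to $\theta(\sigma)$ satisfies $|\gamma_\perp| \geq h_{\min}(\theta(\sigma))$, giving
\[
|\sigma \wedge \gamma| = |\sigma|\cdot|\gamma_\perp| \geq \ell_{\min}(\theta(\sigma))\cdot h_{\min}(\theta(\sigma)).
\]

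The remaining step is to bound $\ell_{\min}(\theta)\cdot h_{\min}(\theta)$ uniformly away from zero as $\theta$ varies over all parabolic directions on $S$. Since $V(S)$ is a lattice in $\SL(2,\RR)$, the Teichm\"uller curve has only finitely many cusps, so there are only finitely many $V(S)$-orbits of parabolic directions. Within one such orbit, if $g \in V(S)$ sends $\theta$ to $g*\theta$, the linear action of $g$ scales lengths in direction $\theta$ by $\lambda=|g\,e_\theta|$ and scales perpendicular heights by $\lambda^{-1}$ (since $\det g = 1$ and cylinders are sent to cylinders, preserving their areas). The product $\ell_{\min}(\theta)\cdot h_{\min}(\theta)$ is therefore constant along each $V(S)$-orbit, so taking the minimum over the finitely many orbits yields the required constant $c(S)>0$.

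The main obstacle I anticipate is the geometric claim that $|\gamma_\perp| \geq h_{\min}(\theta(\sigma))$: it requires a careful case analysis ruling out degenerate configurations, e.g.\ trajectories whose endpoints happen to lie on the same horizontal edge of a cylinder (in which case $\gamma$ must wrap around, but still with vertical excursion at least one cylinder height) and singular points shared among several cylinders. A secondary technical point is to verify that the lengths of boundary saddle connections transform covariantly under affine automorphisms, so that the product $\ell_{\min}\cdot h_{\min}$ is genuinely orbit-invariant and not just invariant under parabolic stabilizers.
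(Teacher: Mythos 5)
Your proof is correct and rests on the same key geometric fact as the paper's: a saddle connection transverse to the cylinder decomposition in the other's parabolic direction must cross a full cylinder, giving a uniform lower bound on $|\sigma\wedge\gamma|$. The paper packages the uniformity slightly differently (a lower bound $a$ on cylinder areas and a comparison constant $M>1$ between lengths of parallel saddle connections and closed geodesics, then working with two intersecting parallel cylinders $C_{\gamma'}$ and $C_{\sigma'}$ rather than with $\sigma$ and $\gamma$ directly) whereas you push the uniformity into $V(S)$-invariance of $\ell_{\min}\cdot h_{\min}$ and finiteness of cusps; the substance is the same, and the case analysis you flag is harmless since a non-horizontal straight segment that enters the interior of a cylinder must exit through the opposite boundary, so it always accrues at least one full cylinder height.
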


\begin{proof}
Since $S$ is a Veech surface, then there exists a constant $M>1$ such that if $\gamma$ and $\gamma'$ are closed geodesics or saddle connections in the same direction, then we have $\abs{\gamma}<M\abs{\gamma'}$ and $\abs{\gamma'}<M\abs{\gamma}$.
Moreover there exists a constant $a>0$ such that the family of closed geodesics parallel to any given saddle connection $\gamma$ spans a cylinder $C$ with $\Area(C)>a$.
Let  $C_\gamma$ and $C_\sigma$ be cylinders in the two directions $\theta(\gamma)$ and $\theta(\sigma)$ of the two saddle connections $\gamma$ and $\sigma$. If we replace $C_\gamma$ and $C_\sigma$ by parallel cylinders $C_{\gamma'}$ and $C_{\sigma'}$ with core curves  the closed curves $\sigma',\gamma'$ parallel to $\sigma$ and $\gamma$ respectively, we can assume that $C_{\gamma'}$ and $C_{\sigma'}$ have non empty intersection.
Since cylinders have area bigger than $a$ then the width of $C_{\gamma'}$ is at least $a/\abs{\gamma'}$.
Since $\gamma'$ and $\sigma'$ are not parallel (they intersect since by assumption the corresponding cylinders intersect), than $\sigma'$ is not contained in $C_{\gamma'}$, see Figure~\ref{fig:cylinders}. It follows that
\[
	\abs{\sigma'} \cdot \abs{\sin(\theta(\sigma) - \theta(\gamma))}>\frac{a}{\abs{\gamma'}}
\]
and therefore
\[
	\abs{\theta(\sigma)-\theta(\gamma)} > \abs{\sin(\theta(\sigma)-\theta(\gamma))} %
	> \frac{a}{\abs{\gamma'}\cdot\abs{\sigma'}}\geq\frac{1}{M^2}\frac{a}{\abs{\gamma}\cdot\abs{\sigma}}.\qedhere
\]
\end{proof}

\begin{figure}
\centering
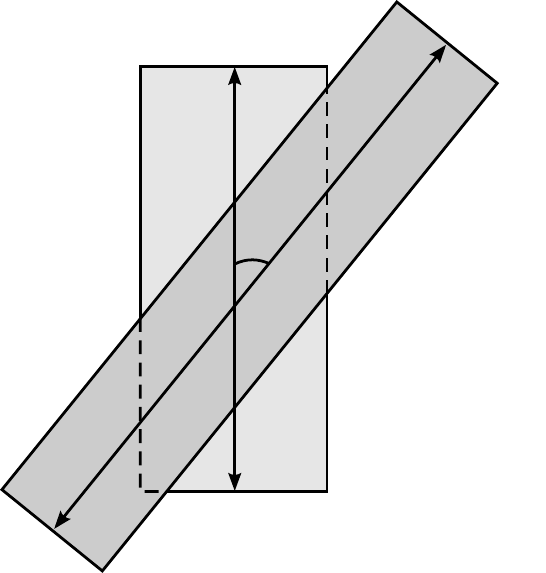
\caption{The two cylinders $C_{\gamma'}$ and $C_{\sigma'}$.}
\label{fig:cylinders}
\end{figure}

The following Lemma provides a useful way to bound areas of saddle connections. 
Fix a direction $\theta$ and consider the sequence of wedges $\W{n}{\theta}$ defined by Equation~\eqref{eq:wedgesfromboundaryexpansion}.
The area of a displacement vector $v$ of $S$ can be estimated in terms of the position of $v$ with respect to the wedges $\W{n}{\theta}$.
More precisely, we have the following.

\begin{lem}\label{area_control}
Fix a direction $\theta$ and consider the sequence  $(\W{n}{\theta})_{n \in \NN}$  associated to $\theta$ by the boundary expansion.
Suppose that $v$ is a displacement vector in direction $\theta(v)$ and consider integers $n$ and $N$ such that  either
\begin{equation}\label{eqcontrolarearight}
	\theta(\vv{n}{r} ) < \theta(v)< \theta(\vv{n+N}{r})<\theta,
\end{equation}
or
\begin{equation}\label{eqcontrolarealeft}
	\theta<\theta(\vv{n+N}{l})<\theta(v)<\theta(\vv{n}{l} ).
\end{equation}  
Then we have that 
\[
	\area{\theta}{v}\geq \frac{c(S)^2}{4 M_0 M_{N}}.
\]
\end{lem}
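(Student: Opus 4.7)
By the symmetry between the $r$ and $l$ sides, it suffices to treat case~\eqref{eqcontrolarearight}. The plan is to establish a lower bound on $\area{\theta}{v}$ of the form $|v\wedge\vv{n}{r}|\cdot|v\wedge\vv{n+N}{r}|/|\vv{n}{r}\wedge\vv{n+N}{r}|$, in which the two numerator wedge products are bounded below by the cross-product form of Lemma~\ref{Veech:lemma}, and the denominator wedge product is bounded above by $M_0 M_N$ via $\SL(2,\RR)$-invariance.

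First, decompose $v$ in the basis of $\RR^2$ given by $\{\vv{n}{r},\vv{n+N}{r}\}$. The hypothesis~\eqref{eqcontrolarearight} places $\theta(v)$ strictly between the two wedge directions, with all three vectors on the same side of $\theta$; thus $v$ sits in the interior of the cone spanned by the pair, and
\[
v = p\,\vv{n}{r} + q\,\vv{n+N}{r}, \qquad p,q>0,
\]
with $p = (v\wedge\vv{n+N}{r})/(\vv{n}{r}\wedge\vv{n+N}{r})$ and $q = (\vv{n}{r}\wedge v)/(\vv{n}{r}\wedge\vv{n+N}{r})$ by Cramer's rule. Because the three directions lie on the same side of $\theta$ (and the saddle connections may all be oriented into the upper half-plane), both $\Re_\theta(\cdot)$ and $\Im_\theta(\cdot)$ have constant sign on the three vectors. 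Setting $x_1 := |\Re_\theta\vv{n}{r}|$, $x_2 := |\Re_\theta\vv{n+N}{r}|$, $y_1 := |\Im_\theta\vv{n}{r}|$, $y_2 := |\Im_\theta\vv{n+N}{r}|$ (all nonnegative), one gets $|\Re_\theta v| = p x_1 + q x_2$, $|\Im_\theta v| = p y_1 + q y_2$, and $|\vv{n}{r}\wedge\vv{n+N}{r}| = |x_2 y_1 - x_1 y_2|$. Expanding the product $\area{\theta}{v}$, discarding the nonnegative terms $p^2 x_1 y_1 + q^2 x_2 y_2$, and using $x_1y_2+x_2y_1 \geq |x_2y_1-x_1y_2|$ (immediate from $x_i,y_i\geq 0$),
\[
\area{\theta}{v} \;=\; (px_1+qx_2)(py_1+qy_2) \;\geq\; pq(x_1y_2+x_2y_1) \;\geq\; pq\,|\vv{n}{r}\wedge\vv{n+N}{r}|.
\]
Substituting the Cramer expressions for $p,q$ yields the \emph{key estimate}
\[
\area{\theta}{v} \;\geq\; \frac{|v\wedge\vv{n}{r}|\cdot|v\wedge\vv{n+N}{r}|}{|\vv{n}{r}\wedge\vv{n+N}{r}|}.
\]

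It remains to estimate the three wedge products. For the numerator I would use the cross-product form of Lemma~\ref{Veech:lemma}: its proof actually establishes the sharper inequality $|\sigma\wedge\gamma|\geq c(S)$ for any pair of non-parallel saddle connections on $S$ (the cylinder-area bound $|\sigma'\wedge\gamma'|\geq a$ transfers to $\sigma,\gamma$ via the ratio bound, with a loss of $M^2$, giving $c(S)=a/M^2$). For the denominator, $G := G_{\ab{0}}\cdots G_{\ab{n-1}}$ lies in $\Gamma\subseteq V(S)\subset\SL(2,\RR)$, so $\det G=1$ and wedge products are preserved:
\[
|\vv{n}{r}\wedge\vv{n+N}{r}| \;=\; |G^{-1}\vv{n}{r}\wedge G^{-1}\vv{n+N}{r}| \;=\; |v^r_{\ab{n}}\wedge w| \;\leq\; |v^r_{\ab{n}}|\cdot|w| \;\leq\; M_0 M_N,
\]
with $w := G_{\ab{n}}\cdots G_{\ab{n+N-1}}\,v^r_{\ab{n+N}}\in\cD_N$. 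Combining yields $\area{\theta}{v}\geq c(S)^2/(M_0 M_N)$, which is in fact stronger than the claimed bound; the extra factor of $1/4$ in the statement presumably absorbs slack in an alternative, more symmetric chain of estimates. The main subtlety I anticipate is the sign and positivity bookkeeping required to justify $p,q>0$ and to read off $|\Re_\theta v|$, $|\Im_\theta v|$ as positive linear combinations with the \emph{same} basis coefficients $p,q$; this is precisely where the geometric content of hypothesis~\eqref{eqcontrolarearight} (all three vectors on the same side of $\theta$, with $v$ strictly between the two basis vectors) is used.
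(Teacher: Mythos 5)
Your proof is correct, and it takes a genuinely different route from the paper's. Where the paper works with hyperbolic-style angle estimates, you work with determinants. Concretely: you decompose $v$ in the basis $\{\vv{n}{r},\vv{n+N}{r}\}$, prove the exact algebraic bound
\[
\area{\theta}{v}\;\geq\; \frac{\abs{v\wedge\vv{n}{r}}\cdot\abs{v\wedge\vv{n+N}{r}}}{\abs{\vv{n}{r}\wedge\vv{n+N}{r}}},
\]
and then bound the numerator below by $c(S)^2$ using the cross-product form of Lemma~\ref{Veech:lemma} (which, as you correctly observe, the proof of that lemma yields, even though only the angle form is stated) and the denominator above by $M_0M_N$ via $\SL(2,\RR)$-invariance. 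The paper instead renormalizes $v,\vv{n}{r},\vv{n+N}{r}$ by $G_{\ab{n-1}}^{-1}\cdots G_{\ab{0}}^{-1}$, writes $\area{\theta}{v}$ as the area of a parallelogram with diagonal $v'$ and sides parallel to $\theta',\theta'_\perp$, expresses that area by the triangle formula involving $\sin(\theta'-\theta(v'))$ and $\sin(\theta(v')-\theta'_\perp)$, and only then invokes Lemma~\ref{Veech:lemma} in its stated angle form $\abs{\theta(\gamma)-\theta(\sigma)}>c/(\abs{\gamma}\abs{\sigma})$. The two approaches apply the same two ingredients (the Veech separation constant and the bounds $M_0$, $M_N$ after renormalization), but your determinant identity is exact, whereas the paper passes through $\abs{\sin x}\geq\abs{x}/2$ twice and $\abs{\sin(\cdot)}\leq 1$ in the denominator; this is exactly where the extra factor of $4$ in the stated bound comes from, which is why your route yields the sharper conclusion $\area{\theta}{v}\geq c(S)^2/(M_0M_N)$. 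The sign bookkeeping you flag as the main subtlety is handled implicitly by the paper too, via the observation that $\theta_\perp$ cannot lie in the wedge (its equation~(3.4)), which is precisely what guarantees that $\Re_\theta$ and $\Im_\theta$ each have constant sign on your three vectors so that the coefficients $p,q$ and the absolute values line up as claimed.
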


\begin{proof}
Let us assume that $\theta(\vv{n}{r} ) < \theta(v)< \theta(\vv{n+N}{r} )<\theta$.
The proof of the other case is analogous.
Let $\theta_\perp$ denote the direction orthogonal to $\theta$.
Remark that, since  all sides of $D$ have at most angular amplitude $\pi$, the wedge $\W{n}{\theta}$, which is contained in $\W{0}{\theta}$, has angular amplitude at most $\pi/2$.
Hence, since $\theta$ belongs to the wedge $\W{n}{\theta}$,  $\theta_\perp$ cannot belong to the same wedge $\W{n}{\theta}$, thus
\beq\label{orderperp}
	\theta_\perp < \theta(\vv{n}{r} ) < \theta(v) \quad \implies \quad \abs{\theta(v) -\theta_\perp} \geq \abs{\theta(\vv{n}{r} )-\theta_\perp}.
\eeq
 Let $\theta \peq{[\ab{0}, \dots, \ab{n}, \dots]}$ and let $\g:=\g_{\ab{n-1}}^{-1} \cdots \g_{\ab{0}}^{-1} $ be the element of $\Gamma$ such that $\g \W{n}{\theta}=W_{\ab{n}}$.
Let us renormalize by applying the element $\g$, that is consider the saddle connections $v':=\g v$, $v_0:=\g \vv{n}{r}$, $v_N:=\g \vv{n+N}{r}$ and let us denote respectively by $\theta(v')$, $\theta(v_0)$ and $ \theta(v_N)$  their directions. 
Let us now show that $\theta(v')$ is bounded away from the directions  $\theta'$, $\theta'_\perp$ defined by $\tan \theta'=\g * \tan\theta$ and  $\tan \theta_\perp'=\g * \tan \theta_\perp$.
In other words, $\theta'$ and $\theta'_\perp$ are directions of lines which are image of lines in direction $\theta$ and $\theta_\perp$ under $\g$.
 
Since $\g$ preserves the order of points on $\partial\DD$ and hence the order of the corresponding directions, it follows from~\eqref{eqcontrolarearight} and~\eqref{orderperp} that $\theta'_\perp < \theta(v_0)<  \theta(v')< \theta(v_N) < \theta'$, which implies
\beq \label{angleslowerbounds}
	 \abs{\theta' - \theta(v')} \geq \abs{\theta(v_N)-\theta(v')}
		\qquad \text{ and } \qquad
	 \abs{\theta(v') - \theta'_\perp} \geq \abs{\theta(v') -  \theta(v_0)}.
\eeq
Recall that $\area{\theta}{v} $ is the area of a rectangle $R$ with sides in directions $\theta$ and $ \theta_\perp$ and $v$ as a diagonal.
Since $\g$ acts linearly and preserves areas, $\area{\theta}{v} $ is also the area of the parallelogram  $\g \cdot R$ which has $v'$ as diagonal and sides in directions $\theta'$ and $\theta_\perp'$.
One can see that this area is bounded below as long as the angles $\abs{\theta(v')-\theta'}$ and $\abs{\theta(v')-\theta_\perp'}$ formed by the diagonal $v'$ with the sides of the parallelogram are bounded below. Indeed, calculating the area of the parallelogram by  the formula which gives the area of a triangle with a side of length $s$ adjacent to angles  $\psi, \phi$ as $(|{s}|^2 \sin\psi \sin \phi) /2 (\sin(\pi/2-\psi-\phi)$,  
 and using the trivial inequalities $\frac{\abs{x}}{2}\leq \abs{\sin(x)}\leq 1$ we get  that
\[
	\area{\theta}{v} = \frac{\abs{v'}^2 (\sin(\theta'-\theta(v') ) \sin(\theta(v')- \theta'_\perp)}{ \sin ( \frac{\pi}{2}-\theta'-\theta'_\perp)}
				 \geq \frac{\abs{v'}^2 \abs{\theta' -\theta(v')} \cdot \abs{\theta(v')-\theta_\perp'}}{4}.
\]
Thus, by~\eqref{angleslowerbounds}, using Lemma~\ref{Veech:lemma} and recalling the definition of the constants $M_0$, $M_N$ in~\eqref{Mndef}, we get
\[
\begin{split}
	\area{\theta}{v} 	&\geq \frac{\abs{v'}^2 \abs{\theta(v_N) - \theta(v')} \cdot \abs{\theta(v') - \theta(v_0)}}{4} 
				\geq \frac{c(S)^2 \abs{v'}^2 }{ 4  \abs{v_N} \abs{v'} \abs{v_0} \abs{v'}} 
				= \frac{c(S)^2}{ 4 \abs{v_N}  \abs{v_0} } \geq \frac{c(S)^2}{ 4 M_N M_0 }. \qedhere
\end{split}
\]
\end{proof}

We can now use Lemmas~\ref{Veech:lemma} and~\ref{area_control} to prove Theorem~\ref{boundary_see_largeL}.
\begin{proofof}{Theorem}{boundary_see_largeL}
Set $L_0:=  4 M_0 M_1/c^2(S)$ where $c(S)$ is the constant in Lemma~\ref{Veech:lemma} and $M_0$, $M_1$ are defined by~\eqref{Mndef}.
Let $\theta$ be such that $L(\theta)>L_0$ and let $(\W{n}{\theta})_{n \in \NN}$ be the sequence of associated wedges.   
Let us remark first that we always  have that $1/L(\theta) \leq \liminf_{n \to \infty} \area{\theta}{\W{n}{\theta}}$,  since $1/L(\theta)$ is computed considering the $\liminf$ on a larger set of saddle connections then the ones which belong to the wedges $\W{n}{\theta}$.
Thus it is enough to prove the converse inequality.

Let $(v_k)_{k\in \NN}$ be a sequence of saddle connections such that $(\area{\theta}{v_k})_{k\in \NN}$ converges to  $1/L(\theta)$ as $\Im_\theta(v_k)$ grows.  
We are going to show that there exists  $\overline{k}$ such that each $v_k$ with $k \geq \overline{k}$  belongs to $\W{n_k}{\theta}$ for some $n_k \in \NN$, so that we also have that  
\[
	\frac{1}{L(\theta)} = \liminf_{k \to \infty } \area{\theta}{v_k} 
		\geq \liminf_{\substack{k \to \infty \\ k\geq \overline{k}}} \area{\theta}{\W{n_k}{\theta}}
		\geq \liminf_{n \to \infty} \area{\theta}{\W{n}{\theta}}.
\]

Since $\Im_\theta(v_k)\to+\infty$, and displacement vectors of saddle connections form a discrete set, we must have that $\Re_\theta(v_k)\to 0$.
Thus the directions of all $v_k$, with $k$ sufficiently large, belong to the initial wedge $\W{0}{\theta}$.
According to Lemma~\ref{area_control}, if $v_k$ was neither equal to $\vv{n}{r}$ nor to $\vv{n}{l}$ for some $n$, we would have $\area{\theta}{v_k}>1/L_0$.
Since $\area{\theta}{v_k}\to1/L(\theta)$ and $L(\theta)>L_0$, this is not possible.
Hence $v_k$ is in some wedge $\W{\theta}{n_k}$ for some $n_k\in\NN$.
This concludes the proof.

\end{proofof}

\section{The cusp acceleration of the boundary expansion}\label{sec:acceleration}
We now define an acceleration of the boundary expansion. The acceleration is obtained by grouping together all steps which correspond to excursions in the same cusp,  in a similar way to how the Gauss map is obtained from the Farey map in the theory of classical continued fractions expansions. One can show that it is  sufficient to consider accelerated times to compute large values of the spectrum (see Theorem~\ref{thmgaussdetects}) and that one can bound areas of saddle connections in terms of the number of steps in the acceleration (see Proposition~\ref{propcontrolareaselemetarywords}).

\subsection{Cuspidal words and cuspidal sequences}
We first describe sequences that can taken to be  boundary expansions of the ideal vertices of $D$. 
\begin{defi}\label{defnestedarcs}
A \emph{left cuspidal word} (respectively a \emph{right cuspidal word}) is a word $\ab{0} \dots \ab{k}$ in the alphabet $\cA$ which satisfies the no-backtracking condition~\eqref{eqnobacktrack} and such that 
the $k+1$ arcs
\[
	\Arc[\ab{0}], \quad \Arc[\ab{0},\ab{1}], \quad \dots \quad  \Arc[\ab{0}, \dots, \ab{k-1}], \quad \Arc[\ab{0}, \dots, \ab{k}]
\]
all share as a  common left endpoint the left  endpoint $\xi_{\ab{0}}^l$ of $\Arc [\ab{0}]$ (respectively as right endpoint the  right endpoint $\xi_{\ab{0}}^r$ of $\Arc [\ab{0}]$), see Figure~\ref{fig:cuspidalword}. 
We simply write that $\ab{0} \dots \ab{k}$ is a \emph{cuspidal word} when left or right is not specified. 
We say that a sequence $(\ab{n})_{n \in \NN}$ is a \emph{cuspidal sequence} if any word of the form $\ab{0}\dots \ab{n}$ for $n \in \NN$ is a cuspidal word and that it is \emph{eventually cuspidal} if there exists $k \in \NN$ such that $(\ab{n+k})_{n \in \NN}$ is a {cuspidal sequence}. 
\end{defi}

\begin{figure}
\centering
\def\svgscale{0.8}
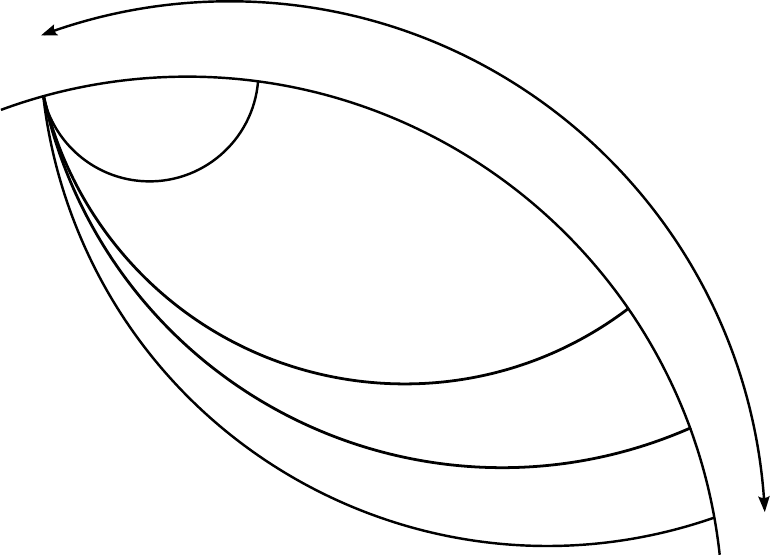
\caption{A left cuspidal word $\ab{0}\dots\ab{k}$.}
\label{fig:cuspidalword}
\end{figure}

Equivalently, $\ab{0} \dots \ab{k}$ is a left (right) cuspidal word exactly when the arc $\Arc [\ab{0}, \dots, \ab{k}] \subset \partial \DD$ has a vertex of $D$ as its left (respectively right) endpoint.
In \S~\ref{sec:combinatorialdescription} we show  that all cuspidal sequences are periodic (see Lemma~\ref{parabolic_in_cuspidal}) and we give an explicit combinatorial description of cuspidal words (see Lemma~\ref{lemcuspidalwords}).  
 
\begin{rem}\label{unique}
Remark that given an ideal vertex $\xi$, there is a \emph{unique} left (right) cuspidal word of length $k+1$ such that the arc $\Arc[\ab{0}, \dots, \ab{k}]$ has $\xi$ as left (right) endpoint.
Indeed, such word can be obtained as follows.
Let $\ab{0}$ be such that $\Arc[\ab{0}]$ has $\xi$ as its left (right) endpoint. 
For any $0\leq i < k$,  the arc  $\Arc [\ab{0},\dots, \ab{i}]$ of level $i$ is subdivided at level $i+1$ into $2d-1$ arcs of level $i+1$  and $\Arc[\ab{0},\dots, \ab{i+1}]$ is  the unique one which contains the left (respectively right) endpoint of $\Arc[\ab{0},\dots, \alpha_{i}]$.
\end{rem} 

\begin{rem}\label{boundary_cuspidal}
Boundary expansions can be defined for all points in the boundary $\partial D$ (and not only non-cuspidal points)  as follows.   \emph{Cuspidal sequences} can be taken by definition to be boundary expansions of vertices of the ideal polygon $D$.  More precisely, if $\xi$ is the left endpoint of $e_\alpha$ and the right  endpoint of $e_\beta$, that is $\xi=\xi^l_\alpha=\xi^r_\beta$, then $\xi$ has exactly two boundary expansions which are respectively given by the unique left boundary expansion starting with $\alpha$ and the unique right boundary expansion starting with $\beta$. Similarly,  \emph{eventually cuspidal} sequences can be taken to be boundary expansions of cuspidal points.
\end{rem} 

\subsection{Definition and properties of the cusp acceleration.}
Let us now define the \emph{cusp acceleration}.

\begin{defi}\label{defparabolicsteps}
Let $\theta \peq{[\ab{0}, \dots, \ab{n}, \dots ]}$  and let $(\W{n}{\theta})_{n \in \NN}$ be the sequence of wedges associated to $\theta$.
The \emph{sequence of accelerated times} is the sequence  $(n_k)_{k\in\NN}$ on integers $n_k$ defined as follows:
\[
	n_0:=0, \qquad n_{k+1} = \min \{ n > n_k \text{ such that $\ab{n_k} \dots \ab{n}$ is not a cuspidal word}\}.  
\]
The \emph{wedges associated to $\theta$ by the cusp acceleration} are the subsequence $(\W{n_k}{\theta})_{k \in \NN}$  of  the wedges $(\W{n}{\theta})_{n \in \NN} $ associated to $\theta$ corresponding to accelerated times. 
\end{defi}

By construction,  the integers $(n_k)_{k\in\NN}$ provide a decomposition of words into maximal cuspidal words, that is, for each $k \in \NN$,  $\ab{n_k} \dots \ab{n_{k+1}-1}$ is a cuspidal word and is the largest cuspidal word in  $\ab{n_k}, \ab{n_k+1}, \dots$ which starts with  $\ab{n_{k}}$. 
We remark that $n_{k+1}$ can be equal to $n_{k}+1$, i.e.\  maximal cuspidal words in the decomposition may have length one.

A description of the acceleration times in terms of wedges is given by the following Lemma, which shows that  the acceleration groups together exactly all steps in which one of the two displacement vectors in the wedges does not change.

\begin{lem}\label{lemelementarywords}
Let $\theta \peq{[\ab{0}, \dots, \ab{n},\dots]}$ and $(\W{n}{\theta})_{n \in \NN} $ be the sequence of wedges associated to $\theta$ by the boundary expansion.
Call $(n_k)_{k\in \NN}$ the sequence of accelerated times.
Then, for any time $k \in \NN$, exactly one of the following holds:
\begin{align}
	\vv{n}{r} &=\vv{n_k}{r}, \text{ for all $n_k \leq n < n_{k+1}$, and $\theta(\vv{n_{k+1}}{r}) \neq \theta(\vv{n_{k}}{r})$,}  \label{leftcase} \\
	\vv{n}{l} &=\vv{n_k}{l}, \text{ for all $n_k \leq n < n_{k+1}$, and $\theta(\vv{n_{k+1}}{l}) \neq \theta(\vv{n_{k}}{l})$.} \label{rightcase}
\end{align}
More precisely, if $\ab{n_k}\dots\ab{n_{k+1}}$ is a right cuspidal word, then~\eqref{leftcase} holds, and if  $\ab{n_k}\dots\ab{n_{k+1}}$ is a left cuspidal word, then~\eqref{rightcase} holds.
\end{lem}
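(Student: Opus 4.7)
The plan is to leverage equation~\eqref{arcexpression} to translate the combinatorial property of cuspidal words into geometric information about the wedge vectors. The key observation is that the two endpoints of $\Arc[\ab{0},\dots,\ab{n}]=H_n\Arc[\ab{n}]$, where $H_n:=G_{\ab{0}}\cdots G_{\ab{n-1}}$, are precisely the boundary points $H_n\xi^r_{\ab{n}}$ and $H_n\xi^l_{\ab{n}}$, which correspond respectively to the directions $\theta(\vv{n}{r})$ and $\theta(\vv{n}{l})$ of the wedge vectors (since $\vv{n}{r}=H_n v^r_{\ab{n}}$ and similarly for $l$).

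First I would check that, as a consequence of the no-backtracking condition~\eqref{eqnobacktrack}, each generator $G_{\alpha}$ preserves the left/right labelling of the endpoints of any arc $\Arc[\beta]$ with $\beta\neq\overline{\alpha}$: indeed M\"obius transformations preserve cyclic order on $\partial\DD$, while~\eqref{action_g_on_labels} and the fact that $G_{\alpha}$ carries the complement of $\Arc[\overline{\alpha}]$ onto $\Arc[\alpha]$ pin down the orientation. Iterating, $H_n\xi^{r}_{\ab{n}}$ is the right endpoint, and $H_n\xi^{l}_{\ab{n}}$ the left endpoint, of $\Arc[\ab{0},\dots,\ab{n}]$.

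Now suppose that $\ab{n_k}\dots\ab{n_{k+1}-1}$ is a right cuspidal word (the left case being symmetric). By Definition~\ref{defnestedarcs}, the arcs $\Arc[\ab{n_k},\dots,\ab{n}]$ for $n_k\leq n\leq n_{k+1}-1$ share the right endpoint $\xi^{r}_{\ab{n_k}}$. Applying $H_{n_k}$ and using~\eqref{arcexpression}, the arcs $\Arc[\ab{0},\dots,\ab{n}]$ all share the right endpoint $H_{n_k}\xi^{r}_{\ab{n_k}}$; combined with the previous step this forces $\theta(\vv{n}{r})=\theta(\vv{n_k}{r})$ throughout this range. For the strict inequality, the maximality of $n_{k+1}$ means $\ab{n_k}\dots\ab{n_{k+1}}$ is no longer cuspidal, so the right endpoint of $\Arc[\ab{0},\dots,\ab{n_{k+1}}]$ differs from $H_{n_k}\xi^{r}_{\ab{n_k}}$, giving $\theta(\vv{n_{k+1}}{r})\neq\theta(\vv{n_k}{r})$.

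It remains to promote equality of directions to equality of vectors; I would proceed by induction on $n\geq n_k$, reducing to the one-step claim that $G_{\ab{n}}v^{r}_{\ab{n+1}}=v^{r}_{\ab{n}}$ whenever $\ab{n}\ab{n+1}$ is right cuspidal. Using the endpoint identity $G_{\ab{n}}\xi^{r}_{\ab{n+1}}=\xi^{r}_{\ab{n}}$ established above, $G_{\ab{n}}v^{r}_{\ab{n+1}}$ lies in the parabolic direction $\theta^{r}_{\ab{n}}$; since $G_{\ab{n}}\in V(S)$ acts isometrically on $S$ and bijects the set of saddle connections in direction $\theta^{r}_{\ab{n+1}}$ onto that in direction $\theta^{r}_{\ab{n}}$ (applying the same argument to $G_{\ab{n}}^{-1}$ gives $|v^{r}_{\ab{n}}|=|v^{r}_{\ab{n+1}}|$), the image is again a shortest saddle connection, so $G_{\ab{n}}v^{r}_{\ab{n+1}}=\pm v^{r}_{\ab{n}}$. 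The sign is fixed by the upper-half-plane normalization of the $W_{\alpha}$ together with the positive orientation of the wedges as ordered bases. I expect this last step—namely the sign bookkeeping—to be the main obstacle, because a generic element of $\SL(2,\RR)$ need not preserve the upper half-plane; it may require a careful orientation analysis of the image wedge $G_{\ab{n}}W_{\ab{n+1}}$ relative to $W_{\ab{n}}$ inside the shared cone.
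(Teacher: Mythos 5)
Your proposal takes essentially the same route as the paper: use the cuspidal-word combinatorics to show the directions $\theta(\vv{n}{r})$ are frozen, then promote this to equality of vectors via the fact that linear maps preserve the property of being the shortest saddle connection in a given direction, and get the strict inequality at $n_{k+1}$ from maximality. The paper organizes the bookkeeping differently — it conjugates by $G_k = G_{\ab{0}}\cdots G_{\ab{n_k-1}}$ to reduce once and for all to the base case $k=0$, appealing to the fact that $F$ acts as a shift — whereas you run an induction on $n$ reducing to the one-step claim $G_{\ab{n}}v^r_{\ab{n+1}}=v^r_{\ab{n}}$. Both are correct and equivalent in content.

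Two points deserve comment. First, a small but genuine slip: you assert that $G_{\ab{n}} \in V(S)$ ``acts isometrically.'' It does not — a generic element of $\SL(2,\RR)$ changes lengths, and in general $|v^r_{\ab{n}}| \neq |v^r_{\ab{n+1}}|$. The correct reason your conclusion survives (and the reason the paper actually uses, cf.\ the argument in Lemma~\ref{parabolic_in_cuspidal}) is that a linear map scales all \emph{parallel} vectors by the same factor, so the image of the shortest saddle connection in a direction is still the shortest in the image direction. The determinant-one condition plays no role in this particular step.

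Second, you flag the sign/orientation issue as a possible obstacle and leave it open. The worry is legitimate, and the paper's statement ``by uniqueness of the shortest displacement vector'' elides it — uniqueness holds only once one fixes a side of the line, since $v$ and $-v$ are both shortest. The resolution is the normalization fixed just before Lemma~\ref{properties_boundary}: all $W_\alpha$ are taken in the upper half-plane and all $-W_\alpha$ in the lower, and the no-backtracking condition implies that $W_{\ab{n}}^{-1} G_{\ab{n}} W_{\ab{n+1}}$ is a \emph{positive} matrix (this is exactly Lemma~\ref{lemproductmatricesA}, proved later but independent of the present lemma). Hence $G_{\ab{n}}v^r_{\ab{n+1}}$ is a nonnegative combination of $v^r_{\ab{n}}$ and $v^l_{\ab{n}}$, so it lies in the upper half-plane, and being $\pm v^r_{\ab{n}}$ it must be $+v^r_{\ab{n}}$. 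So the gap you anticipated is real in the sense that it must be said, but it is closed exactly by the cone-positivity you gesture at in your last sentence.
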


\begin{proof}
By definition of accelerated times, $\ab{n_k} \dots \ab{n_{k+1}-1}$ is a maximal cuspidal word.
Let us show that if it is a right cuspidal word then~\eqref{leftcase} holds. 
Let us first remark that if we prove the equalities in~\eqref{leftcase}, namely $\vv{n}{r}=\vv{n_k}{r}$ for all $n_k < n < n_{k+1}$, then the inequality $\theta(\vv{n_{k+1}}{r}) \neq \theta(\vv{n_{k}}{r})$ in~\eqref{leftcase} follows from maximality,  since if it failed  then we would have $\xi^l_{n_k} = \dots = \xi^l_{n_{k+1}-1}=  \xi^l_{n_{k+1}}$ and $\ab{n_k} \cdots \ab{n_{k+1}-1} \ab{n_{k+1}}$ would also be cuspidal, contradicting the definition of $n_{k+1}$.  
Let us now show that  it is enough to prove that
\begin{equation}\label{k0case}
\vv{n}{r}=\vv{0}{r}, \qquad \text{ for } 0 < n < n_{1},
\end{equation}
that is to prove the equalities in~\eqref{leftcase} for the case $k=0$, recalling that $n_0=0$.  
Consider the element $G_k:= G_{\ab{0}} \cdots G_{\ab{n_k-1}}$ and  apply $G_k^{-1}$ to the  wedges $ \W{n}{\theta}$ with $n> n_k$, to obtain the wedges 
\beq \label{wedgesFn}
	G_k^{-1} \W{n}{\theta} =  \left(G_{\ab{n_k-1}}^{-1}\dots G_{\ab{0}}^{-1}\right)  G_{\ab{0}} \dots G_{\ab{n-1}} W_{\ab{n}} = 
		G_{\ab{n_k}} G_{\ab{n_k+1}} \dots G_{\ab{n-1 }} W_{\ab{n}},  \quad n>n_k. 
\eeq
Thus, since $G_k$ is invertible and hence acts bijectively on $\RR^2$, to prove that  $\vv{n}{r}=\vv{n_k}{r}$ for some $n > n_k$ it is enough to prove that the vector $G_k^{-1} v_r^{(n)}$  in~\eqref{wedgesFn} is equal to  $G_k^{-1}v_r^{(n_k)}=v_{\ab{n_k}}^r$. 
Remark  now that $\f$ acts as a shift on boundary expansions, so that $\f^{n_k}(\theta) \peq{ [\ab{n_k}, \ab{n_k+1}, \dots ]}$.
So, $W_{\ab{n_k}}$ and the wedges in~\eqref{wedgesFn} are the wedges $(\W{m}{\f^{n_k}(\theta)})_{ \in \NN}$ associated to $\f^{n_k}(\theta)$ by the boundary expansion. Hence, by replacing $\theta$ with $\f^{n_k}(\theta)$, one reduces to proving~\eqref{k0case}. Fix now $0<n<n_1$. Since
\[
	\W{n}{\theta}= \g_{\ab{0}}\dots \g_{\ab{n-1}}W_{{\ab{n}}}, 
\]
$\vv{n}{r}$ is the image by a linear map of the right vector $v^r_{\ab{n}}$ in $ W_{\ab{n}}$, that by definition is the shortest vector in direction $\theta(v^r_{\ab{n}})$. It follows that  $\vv{n}{r}$  is the shortest vector in direction $\theta(\vv{n}{r})$. 
On the other hand,  since $\ab{0}, \dots, \ab{n}$ is a right cuspidal word, by Definition~\ref{defnestedarcs} we have that $\xi_n^r = \xi_0^r $ and hence also  $\theta(\vv{n}{r})=\theta(\vv{0}{r})$. Thus, since by definition $\vv{0}{r}$ is the shortest displacement vector in direction $\theta(\vv{0}{r})$, by uniqueness of the shortest displacement vector of a saddle connection in  direction $\theta(\vv{n}{r}) = \theta(\vv{0}{r})$, we must have $\vv{n}{r}= \vv{0}{r}$.
This concludes the proof of~\eqref{leftcase}. The case of a left cuspidal word leads to~\eqref{rightcase} and is proved analogously.
\end{proof}

One can show that accelerated times are sufficient to evaluate large values of the spectrum.

\begin{thm}\label{thmgaussdetects}
Let $S$ be a Veech surface and assume that $\theta$ is such that	$L(\theta)>{4M_0M_2}/{c(S)^2}.$
Let $(\W{n_k}{\theta})_{k\in \NN}$ be the corresponding sequence of wedges at  the accelerated times introduced in Definition~\ref{defparabolicsteps}.
Then, denoting as before  $\areaW{\theta}{\W{n_k}{\theta}}= \min \left\{ \area{\theta}{\vv{n_k}{r}}, \area{\theta}{\vv{n_k}{l}} \right\}$, we have
\[
	L(\theta)= \frac{1}{\liminf_{k\to\infty} \areaW{\theta}{\W{n_k}{\theta}}}. 
\]
\end{thm}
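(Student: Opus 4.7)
My plan is to combine Theorem~\ref{boundary_see_largeL} with a careful local analysis of wedge areas throughout a single cuspidal excursion, relying on the structural information of Lemma~\ref{lemelementarywords} and the quantitative estimate in Lemma~\ref{area_control}. Since $M_2\geq M_1$, the hypothesis $L(\theta) > 4 M_0 M_2 / c(S)^2$ is at least as strong as that of Theorem~\ref{boundary_see_largeL}, yielding both $L(\theta) = 1/\liminf_{n\to\infty} \areaW{\theta}{\W{n}{\theta}}$ and the quantitative inequality $1/L(\theta) < c(S)^2/(4 M_0 M_2)$. The subsequence bound $\liminf_{k\to\infty} \areaW{\theta}{\W{n_k}{\theta}} \geq \liminf_{n\to\infty} \areaW{\theta}{\W{n}{\theta}}$ is immediate, so the content of the theorem is the reverse inequality.

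The key to the reverse inequality is the local estimate
\[
	\areaW{\theta}{\W{n}{\theta}} \geq \min\left\{\areaW{\theta}{\W{n_k}{\theta}},\ \areaW{\theta}{\W{n_{k+1}}{\theta}},\ \tfrac{c(S)^2}{4 M_0 M_2}\right\}
\]
valid for every $n$ with $n_k < n < n_{k+1}$. To prove it, I would assume by symmetry that the maximal cuspidal word $\ab{n_k}\cdots\ab{n_{k+1}-1}$ is right cuspidal, so by Lemma~\ref{lemelementarywords} we have $\vv{n}{r} = \vv{n_k}{r}$ throughout the excursion and thus $\areaW{\theta}{\W{n}{\theta}} = \min\{\area{\theta}{\vv{n_k}{r}}, \area{\theta}{\vv{n}{l}}\}$. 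If the minimum is attained at the right vector, then $\areaW{\theta}{\W{n}{\theta}} = \area{\theta}{\vv{n_k}{r}} \geq \areaW{\theta}{\W{n_k}{\theta}}$, and we are done. Otherwise, setting $v := \vv{n}{l}$, I would apply Lemma~\ref{area_control} with $m = n-1$ and $N = 2$: the required strict inequalities $\theta(\vv{n-1}{l}) > \theta(\vv{n}{l}) > \theta(\vv{n+1}{l})$ follow from the strict shrinking of the left endpoints of the nested arcs $\Arc[\ab{0},\ldots,\ab{n}]$ within a right cuspidal word, and so the lemma gives $\area{\theta}{v} \geq c(S)^2/(4 M_0 M_2)$.

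The main obstacle is the boundary subcase $n = n_{k+1}-1$, where the strict inequality $\theta(\vv{n+1}{l}) < \theta(\vv{n}{l})$ can fail if $\vv{n_{k+1}}{l}$ happens to coincide with $v$ (this occurs when the new sub-arc $\Arc[\ab{0},\ldots,\ab{n_{k+1}}]$ inherits the left endpoint of $\Arc[\ab{0},\ldots,\ab{n_{k+1}-1}]$). In that situation, however, the very identification $\vv{n_{k+1}}{l} = v$, which follows from the fact that any two wedge vectors with the same direction must be equal as shortest saddle connections in that direction, immediately gives $\areaW{\theta}{\W{n_{k+1}}{\theta}} \leq \area{\theta}{\vv{n_{k+1}}{l}} = \areaW{\theta}{\W{n}{\theta}}$, accounting for the $\areaW{\theta}{\W{n_{k+1}}{\theta}}$ term in the local estimate. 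With the estimate established, the theorem follows: any sequence $(n_j)$ achieving $\areaW{\theta}{\W{n_j}{\theta}} \to 1/L(\theta)$ eventually satisfies $\areaW{\theta}{\W{n_j}{\theta}} < c(S)^2/(4 M_0 M_2)$, so the third term in the minimum is inactive and one can extract an accelerated index $\tilde{k}_j \in \{k_j, k_j+1\}$ with $\areaW{\theta}{\W{n_{\tilde{k}_j}}{\theta}} \leq \areaW{\theta}{\W{n_j}{\theta}}$, yielding $\liminf_k \areaW{\theta}{\W{n_k}{\theta}} \leq 1/L(\theta)$ and hence the claimed equality.
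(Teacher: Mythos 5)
The paper does not actually include a proof of Theorem~\ref{thmgaussdetects}: the authors write that they omit it since they will not use it, and remark only that it can be ``easily deduced'' from the estimates in Proposition~\ref{propcontrolareaselemetarywords}. Your argument supplies exactly such a derivation, going back one step further to the underlying tools (Lemma~\ref{lemelementarywords} and Lemma~\ref{area_control} with $N=2$) rather than quoting the Proposition's inequalities verbatim, and it is correct. The subsequence bound is indeed trivial, and the nontrivial direction is carried by your local estimate
\[
	\areaW{\theta}{\W{n}{\theta}} \geq \min\left\{\areaW{\theta}{\W{n_k}{\theta}},\ \areaW{\theta}{\W{n_{k+1}}{\theta}},\ \tfrac{c(S)^2}{4 M_0 M_2}\right\}, \qquad n_k < n < n_{k+1},
\]
which you prove cleanly by case analysis. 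Two remarks on the delicate points. First, you implicitly use that within a right cuspidal block the left wedge vectors are pairwise distinct in direction (so the inequalities fed into Lemma~\ref{area_control} are strict); this follows because the right vector is frozen, consecutive wedges differ in at least one vector, and any two wedge vectors of the same direction coincide by uniqueness of the shortest saddle connection. This is exactly the reasoning the paper uses in proving Proposition~\ref{propcontrolareaselemetarywords}, so your argument is consistent with it (though note that display~\eqref{otherinequalities} in the paper has the inequality signs reversed, a typographical slip you corrected tacitly). Second, your treatment of the boundary index $n = n_{k+1}-1$ is a genuinely useful refinement over reading off~\eqref{lastrightarea} directly: that inequality involves $M_{N_{k+1}+2}$, which is weaker than $M_2$ when the next excursion is long, whereas your dichotomy ($\vv{n_{k+1}}{l} = \vv{n}{l}$ hands the control over to $\areaW{\theta}{\W{n_{k+1}}{\theta}}$, and otherwise Lemma~\ref{area_control} with $N=2$ still applies) keeps the bound at the uniform level $M_2$ that the threshold in the Theorem actually requires. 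The final extraction step is also sound, since for non-cuspidal $\theta$ the accelerated times $n_k$ tend to infinity and hence the associated indices $k_j$ do as well.
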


We do not include the proof of this result since we will not use it. The proof can be easily deduced by the estimates in the following Lemma, that we will use in the proof of the existence of the Hall ray.

\begin{prop}\label{propcontrolareaselemetarywords}
Let $(\W{n}{\theta})_{n \in \NN}$ be the sequence of wedges associated to a direction $\theta$ and let $(n_k)_{k\in \NN}$ be the sequence of accelerated times. Let $c=c(S)$ be as in Lemma~\ref{Veech:lemma} and 
let $N_k:=n_{k+1}-n_k$ be the lengths of the the $k^\text{th}$ maximal cuspidal word. 
For any $k \in \mathbb{N}$, either~\eqref{leftcase} holds, in which case  we have 
\begin{equation}\label{leftareas}
	\area{\theta}{\vv{n}{r}}  =\area{\theta}{\vv{n_k}{r}}> \frac{c^2}{4M_0M_{N_k+2}}, 	\qquad \text{ for }  n_k\leq n < n_{k+1}, 
\end{equation}
and
\begin{subequations}\label{allrightareas}
	\begin{align}
&	\area{\theta}{\vv{n_k}{l}}  >\frac{c^2}{4M_0M_{N_{k-1}+3}}, \label{firstrightarea} \\
&	\area{\theta}{\vv{n}{l}}  >\frac{c^2}{4M_0M_{2}}, 			&  &\text{ for }  n_k< n< n_{k+1}-1, \label{rightareas} \\ 
&	\area{\theta}{\vv{n_{k+1}-1}{l}}  >\frac{c^2}{4M_0M_{N_{k+1}+2}},  \label{lastrightarea}
\end{align}
\end{subequations}
or~\eqref{rightcase} holds, in which case the same inequalities hold  with the role of $r$ and $l$ exchanged.    
\end{prop}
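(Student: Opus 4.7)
The strategy is to apply Lemma~\ref{area_control} separately to each displacement vector $\vv{n}{r}$ and $\vv{n}{l}$ with $n_k \leq n < n_{k+1}$, choosing the auxiliary indices $(n', N')$ in its hypothesis so that $N'$ matches the subscript of $M_{\cdot}$ in the desired bound. By the symmetric roles of $r$ and $l$ in Lemma~\ref{lemelementarywords}, it suffices to treat the right cuspidal case \eqref{leftcase}; the case \eqref{rightcase} follows by swapping letters. A key preliminary observation is that within a right cuspidal block the right endpoint of the arc $\Arc[\ab{0},\ldots,\ab{n}]$ is anchored at a common cusp, so at every interior step of the block the left endpoint of the arc must strictly move (the arcs being properly nested), and hence $\theta(\vv{n}{l})$ strictly decreases at every transition $n \to n+1$ with $n_k \leq n < n_{k+1}-1$.

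With this observation, the interior bound \eqref{rightareas} is the cleanest: for $n_k < n < n_{k+1}-1$, apply Lemma~\ref{area_control} to $v = \vv{n}{l}$ with $n' = n-1$ and $N' = 2$, bracketing $\theta(\vv{n}{l})$ strictly between $\theta(\vv{n+1}{l})$ and $\theta(\vv{n-1}{l})$, which yields exactly $c^2/(4 M_0 M_2)$. For the constant right bound \eqref{leftareas}, I apply Lemma~\ref{area_control} to $v = \vv{n_k}{r}$ with $n' = n_k - 2$ and $N' = N_k + 2$, so that $n' + N' = n_{k+1}$: the upper bracket $\theta(\vv{n_k}{r}) < \theta(\vv{n_{k+1}}{r})$ is immediate from Lemma~\ref{lemelementarywords}, and the lower bracket $\theta(\vv{n_k - 2}{r}) < \theta(\vv{n_k}{r})$ is obtained by a brief case analysis on block $k-1$: whether it is right or left cuspidal, the right direction strictly changes at least once in the two-step window just before $n_k$.

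For the endpoint left bounds \eqref{firstrightarea} and \eqref{lastrightarea} the proof must reach into a neighboring block, which is what the $+3$ and $+2$ increments in the index absorb. For $\vv{n_k}{l}$ I take $n' = n_{k-1} - 2$ and $N' = N_{k-1} + 3$, so that $n' + N' = n_k + 1$, exploiting the strict change of the left direction at $n_k \to n_k + 1$ (interior to block $k$) on one side and the structural change within the two-step window preceding block $k-1$ on the other; the bound \eqref{lastrightarea} for $\vv{n_{k+1}-1}{l}$ is obtained symmetrically by taking $n' = n_{k+1} - 2$ (near the end of block $k$) and $N' = N_{k+1} + 2$, so that $n' + N' = n_{k+2}$ reaches into block $k+1$. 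The main obstacle will be carrying out the case analysis needed to verify every strict directional inequality, especially when adjacent blocks have length one (so the two-step buffer is tight) and when transitions between right and left cuspidal blocks occur at shared cusps of the ideal polygon; the additive constants $+2$ and $+3$ appearing in the statement are calibrated precisely to absorb all such configurations.
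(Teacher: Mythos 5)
Your proposal is correct and takes essentially the same approach as the paper: each bound comes from Lemma~\ref{area_control} with precisely the choices of auxiliary indices $(n',N')$ you identified, namely $(n-1,2)$ for the interior left bounds, $(n_k-2, N_k+2)$ for the constant right vector, $(n_{k-1}-2, N_{k-1}+3)$ for the first left vector, and $(n_{k+1}-2, N_{k+1}+2)$ for the last left vector, together with the same case analysis on the neighboring block to establish strictness of the bracketing angles. The paper carries out the case analysis on block $k-1$ in slightly more detail than you sketch, but the decomposition and key lemma are identical.
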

\begin{proof}
Fix $k \in \NN$.
By Lemma~\ref{lemelementarywords}, either~\eqref{leftcase} or~\eqref{rightcase} hold.
Let us assume that~\eqref{leftcase} holds and prove the inequalities~\eqref{leftareas} and~\eqref{allrightareas}.
The case when~\eqref{rightcase} holds is analogous. Remark that, since for any $n$ the wedges $\W{n}{\theta}$ and $\W{n+1}{\theta}$ differ by at least one vector, by~\eqref{leftcase}  for any $n_k < n< n_{k+1}-1$ we have
\begin{equation}\label{otherinequalities}
	\theta\big(\vv{n-1}{l}\big)< \theta\big(\vv{n}{l}\big)< \theta\big(\vv{n+1}{l}\big),
\end{equation}
while one could have that $\theta\big(\vv{n_{k+1}-1}{l}\big)= \theta\big(\vv{n_{k+1}}{l}\big)$.
The inequality~\eqref{rightareas} for $n_k < n< n_{k+1}-1$ hence follows immediately from~\eqref{otherinequalities} by applying Lemma~\ref{area_control} with $N=2$. 
In order to prove~\eqref{leftareas}, observe first that by~\eqref{leftcase} $\vv{n}{r}=\vv{n_k}{r}$ for $n_k\leq n< n_{k+1}$.
We claim  that we have
\begin{equation}\label{inequalitiesleft}
	\theta \big(\vv{n_k-2}{r} \big)<\theta\big(\vv{n}{r}\big)<\theta\big(\vv{n_{k+1}}{r}\big).
\end{equation}
From~\eqref{inequalitiesleft}, applying Lemma~\ref{area_control} with $n={n_k-2}$ and $N=N_k+2=n_{k+1}- (n_k-2)$, we immediately get~\eqref{leftareas}. We are hence left to prove~\eqref{inequalitiesleft}: 
the last inequality is part of~\eqref{leftcase}, while the first inequality follows by remarking that  we cannot have $\theta \big(\vv{n_k-2}{r} \big)=\theta \big(\vv{n_k-1}{r} \big) = \theta\big(\vv{n_k}{r}\big)$ since this would contradict the definition of $n_{k}$. More in detail, if $\theta\big(\vv{n_k-1}{r} \big) < \theta\big(\vv{n_k}{r}\big)$ then $\theta \big(\vv{n_k-2}{r} \big)\leq  \theta\big(\vv{n_k-1}{r} \big) < \theta\big(\vv{n_k}{r}\big)$ and we are done.
If $\theta\big(\vv{n_k-1}{r} \big) = \theta\big(\vv{n_k}{r}\big)$ then~\eqref{leftcase} does not hold for ${k-1}$.
Thus~\eqref{rightcase} for ${k-1}$ yields $\theta \big(\vv{n_k-2}{l} \big)=  \theta\big(\vv{n_k-1}{l} \big)$ and so $\theta \big(\vv{n_k-2}{r} \big)<  \big(\vv{n_k-1}{r} \big)$. 

Finally, we obtain the bounds in~\eqref{firstrightarea} and in~\eqref{lastrightarea} by applying twice Lemma~\ref{area_control} to the inequalities
\[
	\theta\big(\vv{n_{k-1}-2}{l}\big)<\theta\big(\vv{n_{k}}{l}\big)<\theta\big(\vv{n_{k}+1}{l}\big)
	\quad \text{ and } \quad
	\theta\big(\vv{n_{k+1}-2}{l}\big)<\theta\big(\vv{n_{k+1}-1}{l}\big)<\theta\big(\vv{n_{k+2}}{l}\big),
\]
which can be proved reasoning as above and  using the definition $n_{k-1}$, $n_{k+1}$ and $n_{k+2}$.
\end{proof}

\subsection{Combinatorial description of cuspidal words}\label{sec:combinatorialdescription}
In this section  we  give a combinatorial descriptions of cuspidal words. 

\begin{lem}\label{lemcuspidalwords} Assume that $\alpha_0, \alpha_1, \dots,\alpha_k$  which satisfies the no-backtracking Condition~\eqref{eqnobacktrack}.  

The word $\alpha_0 \alpha_1 \dots \alpha_k$  is a \emph{left cuspidal word} if and only if
\beq\label{left_cuspidal}
	\xi_{\alpha_i}^l = \g_{\alpha_i}(\xi_{\alpha_{i+1}}^l), \qquad \text{for $0 \leq i \leq k-1$.}
\eeq
Similarly, $\alpha_0 \alpha_1 \dots \alpha_k$  is a \emph{right cuspidal word} if and only if
\beq\label{right_cuspidal}
	\xi_{\alpha_i}^r  = \g_{\alpha_i}(\xi_{\alpha_{i+1}}^r) , \qquad \text{for $0 \leq i \leq k-1$.}
\eeq
\end{lem}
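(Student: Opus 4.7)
The plan is to prove the characterization~\eqref{left_cuspidal} of left cuspidal words by induction on $k$; the characterization~\eqref{right_cuspidal} of right cuspidal words follows by the same argument with the roles of left and right endpoints interchanged.

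The key ingredient is formula~\eqref{arcexpression}, together with the observation that each generator $\g_\alpha$, as an element of $\PSL(2,\RR)$, acts on $\partial \DD$ as a clockwise-preserving homeomorphism. Consequently, if an arc $I \subset \partial \DD$ has left endpoint $a$ and right endpoint $b$ (in the convention that $b$ follows $a$ moving clockwise along $I$), then $\g_\alpha(I)$ has left endpoint $\g_\alpha(a)$ and right endpoint $\g_\alpha(b)$. Equation~\eqref{action_g_on_labels} is precisely an instance of this fact applied to the arc complementary to $\Arc[\overline{\alpha}]$, which is mapped to $\Arc[\alpha]$. In particular, by~\eqref{arcexpression} the left endpoint of $\Arc[\ab{0}, \dots, \ab{i}] = \g_{\ab{0}} \cdots \g_{\ab{i-1}} \Arc[\ab{i}]$ is $\g_{\ab{0}} \cdots \g_{\ab{i-1}}(\xi_{\ab{i}}^l)$, so $\ab{0} \dots \ab{k}$ is a left cuspidal word if and only if
\[
	\g_{\ab{0}} \g_{\ab{1}} \cdots \g_{\ab{i-1}}(\xi_{\ab{i}}^l) = \xi_{\ab{0}}^l \quad \text{for every } 1 \leq i \leq k. \qquad (\ast)
\]

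I would then prove by induction on $i$ that $(\ast)$ holding for all $1 \leq i \leq k$ is equivalent to the identities $\xi_{\ab{j}}^l = \g_{\ab{j}}(\xi_{\ab{j+1}}^l)$ for all $0 \leq j \leq k-1$. For $i = 1$, condition $(\ast)$ is exactly~\eqref{left_cuspidal} with $j = 0$. For larger $i$, assuming $(\ast)$ holds for $i - 1$, one cancels the invertible map $\g_{\ab{0}} \cdots \g_{\ab{i-2}}$ from both sides of $(\ast)$ to reduce it to $\g_{\ab{i-1}}(\xi_{\ab{i}}^l) = \xi_{\ab{i-1}}^l$, which is precisely~\eqref{left_cuspidal} with $j = i - 1$. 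This closes the induction and establishes both implications of the lemma.

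There is no significant obstacle in this argument beyond a careful bookkeeping of the clockwise orientation. The only slightly subtle point is the claim that $\g_\alpha$ preserves the left/right labelling of arcs under images, which is a direct consequence of $\g_\alpha \in \PSL(2,\RR)$ being orientation-preserving on $\partial \DD$ combined with~\eqref{action_g_on_labels}; the no-backtracking hypothesis~\eqref{eqnobacktrack} is used implicitly to guarantee that all the nested arcs $\Arc[\ab{0}, \dots, \ab{i}]$ are non-empty connected sub-arcs of $\partial \DD$, so that their left endpoints are well defined.
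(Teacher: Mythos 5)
Your proof is correct and follows essentially the same approach as the paper: both reduce the defining condition on nested arcs to the pairwise identities $\xi_{\alpha_i}^l = \g_{\alpha_i}(\xi_{\alpha_{i+1}}^l)$ by applying $(\g_{\alpha_0}\cdots\g_{\alpha_{i-1}})^{-1}$ and using that $\g_{\alpha_i}$, being orientation-preserving on $\partial\DD$, carries left endpoints to left endpoints of the relevant arcs (under the no-backtracking hypothesis $\alpha_{i+1}\neq\overline{\alpha_i}$). Your reformulation as condition $(\ast)$ followed by an explicit induction is a minor repackaging of the paper's telescoping cancellation, not a genuinely different argument.
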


\begin{proof}
Assume that $\alpha_0, \dots, \alpha_k$ satisfy the no-backtracking condition~\eqref{eqnobacktrack}.
For $1 \leq i < k$, we write $\Arc [\alpha_0, \dots, \alpha_{i}] = \g_{\alpha_0} \dots \g_{\alpha_{i-1}} \Arc[\alpha_{i}]$.
Then, by Definition~\ref{defnestedarcs}, we know that $\alpha_0 \dots \alpha_k$ is a left cuspidal word if and only if, for any $0\leq i \leq k-1$, the pair of arcs 
\[
	\Arc [\alpha_0, \dots, \alpha_{i}] \quad \text{and} \quad \Arc [\alpha_0, \dots, \alpha_{i+1}],
\]
share a common left endpoint. Thus, applying $( \g_{\alpha_0} \dots \g_{\alpha_{i-1}})^{-1}$, we see that this equivalently means that for every $0\leq i < k $ the pair of arcs 
 \[
	\Arc [\alpha_{i}] =( \g_{\alpha_0} \dots \g_{\alpha_{i-1}})^{-1} \Arc [\alpha_0, \dots, \alpha_i], \quad  \g_{\alpha_{i}} \Arc [\alpha_{i+1}]=
{( \g_{\alpha_0} \dots \g_{\alpha_{i-1}})}^{-1} \Arc [\alpha_0, \dots, \alpha_{i+1}] 
\]
share an endpoint.
In particular, since the left endpoints of $\Arc [\alpha_{i}]$ and $\Arc [\alpha_{i+1}]$ are respectively $\xi_{\alpha_i}^l$ and $\xi_{\alpha_{i+1}}^l$ and since $\alpha_{i+1}\neq \overline{\alpha}_i$, we see that $ \g_{\alpha_{i}}$ maps left endpoints of arcs to left endpoints of arcs. 
Thus $\alpha_0, \dots \alpha_k$ is a left parabolic word if and only if~\eqref{left_cuspidal} holds.
The proof for right cuspidal words is analogous.
\end{proof}

We now show that cuspidal sequences are simply obtained by repeating periodically words which correspond to parabolic elements.   

\begin{lem}\label{parabolic_in_cuspidal}
Given any cuspidal sequence $(\ab{n})_{n \in \NN}  $ there exists an integer $k>0$ such that
\begin{enumerate}
	\item The group element $\g= \g_{\alpha_0}\dots \g_{\alpha_{k-1}} \in \Gamma$ associated to $\alpha_0 \cdots \alpha_{k-1}$ is a parabolic linear transformation. If $(\ab{n})_{n \in \NN}  $ is right cuspidal, $G$ fixes the vector $v^r_{\alpha_0}$ in direction $\theta(v^r_{\alpha_0}) =	\theta(v^l_{\overline{\alpha_{k-1}}})$, while if it is left cuspidal $G$ fixes $v^l_{\alpha_0}$ in direction $ \theta(v^l_{\alpha_0}) =	\theta(v^r_{\overline{\alpha_{k-1}}})$. 
\item The infinite cuspidal word $\alpha_0 \alpha_1 \cdots \alpha_n \cdots$ is obtained by repeating periodically the word $\alpha_0 \dots \alpha_{k-1}$, that is, using the notation $n \bmod k$ for the remainder of the division of $n$ by $k$, we can write
\[
	\ab{n} = \alpha_{ n \bmod k} \qquad \text{for all } n \in \NN.
\]
\end{enumerate}
\end{lem}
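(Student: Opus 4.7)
The plan is to focus on the left cuspidal case; the right cuspidal case is symmetric. Set $\xi := \xi^l_{\alpha_0}$. Since $\xi$ is an ideal vertex of $D$, it is a parabolic fixed point of $\Gamma$, and its stabilizer $\mathrm{Stab}_\Gamma(\xi)$ is an infinite cyclic subgroup generated by a parabolic element $P$.

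First I would iterate Lemma~\ref{lemcuspidalwords} to obtain $H_i(\xi^l_{\alpha_i}) = \xi$ for every $i \geq 1$, where $H_i := G_{\alpha_0}\cdots G_{\alpha_{i-1}}$. Applying $G_{\alpha_i}^{-1} = G_{\overline{\alpha_i}}$ to the defining identity $\xi^l_{\alpha_i} = G_{\alpha_i}(\xi^l_{\alpha_{i+1}})$ and invoking~\eqref{action_g_on_labels} yields the explicit combinatorial recursion
\[
\xi^l_{\alpha_{i+1}} = \xi^r_{\overline{\alpha_i}}.
\]
Consequently the sequence $(\xi^l_{\alpha_i})_i$ is the forward orbit of $\xi$ under the map $T(\xi^l_\alpha) := \xi^r_{\overline{\alpha}}$, defined on the finite set $V$ of vertices of $D$. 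Because every vertex of $D$ is simultaneously the left endpoint of a unique side and the right endpoint of a unique side, $T$ is a bijection of $V$; and since $\xi^l_\alpha$ and $\xi^r_{\overline{\alpha}}$ are $\Gamma$-related through $G_\alpha$, $T$ preserves the subset $V_\xi \subset V$ of vertices $\Gamma$-equivalent to $\xi$. Therefore the orbit of $\xi$ under this bijection of the finite set $V_\xi$ is purely periodic; let $k > 0$ denote its minimal period.

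By construction $\xi^l_{\alpha_k} = \xi$, so $H_k \in \mathrm{Stab}_\Gamma(\xi) = \langle P \rangle$. Since $\Gamma$ is free and $\alpha_0 \cdots \alpha_{k-1}$ is a reduced word, $H_k$ is nontrivial and hence parabolic, which proves the first assertion of~(1). For the periodicity in~(2), the shifted sequence $(\alpha_{k+n})_n$ is itself left cuspidal with the same initial vertex $\xi^l_{\alpha_0}$, and by the uniqueness in Remark~\ref{unique} it must coincide term by term with $(\alpha_n)_n$, yielding $\alpha_{k+n} = \alpha_n$ for all $n$. The direction identity $\theta(v^l_{\alpha_0}) = \theta(v^r_{\overline{\alpha_{k-1}}})$ is then an immediate consequence of the recursion at $i = k-1$: since $\alpha_k = \alpha_0$, we get $\xi^l_{\alpha_0} = \xi^l_{\alpha_k} = \xi^r_{\overline{\alpha_{k-1}}}$, so the two boundary points coincide and determine the same parabolic direction on $S$.

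Finally, to see that $G := H_k$ fixes the vector $v^l_{\alpha_0}$ itself (not just its direction), observe that $G$ acts affinely on $S$ preserving the cusp direction $\theta^l_{\alpha_0}$ and hence permutes the saddle connections in that direction; as $v^l_{\alpha_0}$ is the shortest such saddle connection, we get $G \cdot v^l_{\alpha_0} = \pm v^l_{\alpha_0}$, with the positive sign fixed by the canonical $\SL(2,\RR)$ lift of the parabolic Dehn multi-twist preserving the boundary saddle connections of the cylinders in direction $\theta^l_{\alpha_0}$. The main technical point I expect to require care is the combinatorial verification that $T$ is a bijection preserving $V_\xi$, together with the sign check in the last assertion; both are ultimately consequences of the labelling conventions for the sides of $D$ and of the $\SL(2,\RR)$ lift choices fixed earlier.
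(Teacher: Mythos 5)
Your proof is correct and its overall skeleton --- exploiting Lemma~\ref{lemcuspidalwords} to obtain a chain of vertex identities, showing that $H_k$ stabilizes a cusp and is hence parabolic, and then using the uniqueness of Remark~\ref{unique} to obtain $k$-periodicity --- matches the paper's. However, the order of the argument and one key ingredient differ in a way that is worth noting. The paper begins by \emph{defining} $k$ as the smallest integer with $\alpha_k = \alpha_0$, without explicitly verifying that any such $k$ exists, and only afterwards deduces parabolicity and periodicity. You instead introduce the map $T(\xi^l_\alpha) := \xi^r_{\overline\alpha}$, observe it is a bijection of the finite vertex set $V$ of $D$ (because each vertex is the left endpoint of a unique side and the right endpoint of a unique side), and conclude that the $T$-orbit of $\xi$ is \emph{purely} periodic. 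This gives the existence and minimality of $k$ up front, before any dynamical conclusion is drawn, and also produces $\alpha_k = \alpha_0$ automatically. You also make explicit the step $H_k \ne \mathrm{Id}$ via freeness of $\Gamma$ and reducedness of the word, which the paper leaves implicit. On the last point about $G$ fixing the vector $v^l_{\alpha_0}$ rather than only its direction: you correctly note that a sign ambiguity must be resolved by the choice of $\SL(2,\RR)$ representatives (and of the fundamental domain $D$); the paper invokes the "shortest vector" argument without dwelling on this, so your added caveat is appropriate even if, like the paper, you do not chase it down in full. In short, your route is equivalent in spirit but a bit more self-contained: the bijection argument fills a small logical gap in the paper's definition of $k$.
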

\begin{notation}\label{parabolicword_def}
Given  a cuspidal sequence $(\ab{n})_{n \in \NN} $, we will say that $k$ given by Lemma~\ref{parabolic_in_cuspidal} is the \emph{period} of the cuspidal sequence and that    $\alpha_0, \dots, \alpha_k$ is the \emph{parabolic word} associated to $(\ab{n})_{n \in \NN} $.
 \end{notation}
\begin{proof}
Let us assume that $(\ab{n})_{n \in \NN}$ is left cuspidal. The other case is analogous. 
Let $k \in \NN$ be the smallest integer such that $\ab{k}=\ab{0}$.
Since the word $\alpha_0 \dots \alpha_{k-1} \alpha_0$ coincide by assumption with the initial word $\ab{0} \dots \ab{k-1} \ab{k}$ of $(\ab{n})_{n \in \NN}$ and hence it is cuspidal, it follows from Lemma~\ref{lemcuspidalwords} that 
\beq \label{parabolic_chain}
	\begin{split}
		\xi_{\alpha_i}^l  &= \g_{\alpha_i}(\xi_{\alpha_{i+1}}^l) , \qquad \text{ for $ 0\leq i \leq k-1$,} \\
		\xi_{\alpha_k}^l & = \g_{\alpha_k}(\xi_{\alpha_0}^l).
	\end{split}
\eeq 

Let us consider  $\g= \g_{\alpha_0}\dots \g_{\alpha_{k-1}} \in \Gamma$.
It follows from~\eqref{parabolic_chain} that $\g$ fixes the point $\xi_{\alpha_0}^l$ in $\partial\DD$ and hence the direction $\theta(v^l_{\alpha_0})$, since $v^l_{\alpha_0}$ is by definition a vector the direction $\theta(\xi_{\alpha_0}^l)$ which correspond to $\xi_{\alpha_0}^l$.  Since $v^l_{\alpha_0}$ is the shortest displacement vector in direction $\theta(v^l_{\alpha_0})$,  linear maps send the shortest vector in a given direction to the shortest vector in the image direction and $G $ fixes the direction $\theta(\xi_{\alpha_0}^l)$, it follows that  $\g v^l_{\alpha_0}=  v^l_{\alpha_0}$. Finally, to show that $\theta(v^l_{\overline{\alpha_0}})=\theta(v^r_{\alpha_k})$, remark that,  by using  $\g_{\alpha_k} (\xi_{\alpha_0}^l) =  \xi_{\alpha_k}^l$ in~\eqref{parabolic_chain} and~\eqref{action_g_on_labels}, we get
\[
\xi_{\alpha_0}^l = \g_{\alpha_k}^{-1} (\xi_{\alpha_k}^l) = \g_{\overline{\alpha_k}} (\xi_{\alpha_k}^l ) =\xi_{\overline{\alpha_k}}^r.
\]
Thus Part $(1)$ of the Lemma is proved.
To show Part $(2)$, let us remark that by~\eqref{arcexpression} the element $\g$ defined above is such that
\[
	\g \Arc [\alpha_0] = \g_{\alpha_0}\dots \g_{\alpha_{k-1}} \Arc [\alpha_0] = \Arc [ \alpha_0, \dots , \alpha_{k-1}, \alpha_0] = \Arc [ \ab{0}, \dots , \ab{k-1}, \ab{0}].
\]
Moreover, since $\alpha_0, \dots, \alpha_{k-1}$ is a cuspidal word, by Definition~\ref{defnestedarcs} we know that $\Arc [\alpha_0,\dots , \alpha_i]$ all share a common endpoint for $0\leq i < k$.
Thus it follows that  also 
\[
	\g \Arc [ \alpha_0, \dots, \alpha_i] =  \Arc [\alpha_0, \dots, \alpha_{k-1}, \alpha_0, \dots , \alpha_i ], \qquad 0\leq i < k 
\]
all share the same endpoint, which is the same endpoint of $\Arc [ \ab{0}, \dots , \ab{k-1}, \ab{0}]$.
Thus also the word $\alpha_0, \dots , \alpha_{k-1}, \alpha_0, \dots, \alpha_{k-1}$ is   left  cuspidal.
Since there is a  unique left cuspidal word starting with $\alpha_0$ (see Remark~\ref{unique}),  it follows that $\alpha_n=\alpha_{n \mod k}$ for all $k \leq n < 2k$. Using this argument with powers $\g^l$ of $\g$, one can prove by induction on $l$ the desired conclusion.
\end{proof}

\begin{rem}\label{cuspdescription} If $\g_\alpha=\g_{\overline{\alpha}}^{-1}$ is a parabolic generator of $\Gamma$, the two sides $e_{\alpha}, e_{\overline{\alpha}}$ share a common vertex, say $\xi = \xi^r_\alpha = \xi^l_{\overline{\alpha}}$. In this case
\[
	\g_\alpha(\xi^l_{\overline{\alpha}}) = \xi^l_{\overline{\alpha}} = \xi^r_\alpha=\g_{\overline{\alpha}} (\xi^r_\alpha)
\]
and the length-one words $\alpha$ and $\overline{\alpha}$ identify a cusp  with fixed parabolic direction $\theta(v^l_{\overline{\alpha}})=\theta(v^r_\alpha)$. More in general,  the cusps of $\DD/\Gamma$ are in bijection with the set of parabolic words, modulo the operations of cyclical permutation of the entries and inversion $\alpha_0 \dots \alpha_{k}\mapsto \overline{\alpha_{k}} \dots \overline{\alpha_0}$. 
\end{rem}

\section{Existence of a Hall's ray}\label{sec:Hall}
In this section we prove our main result, Theorem~\ref{thm:HallVeech}, that is the existence of a Hall ray for any Veech surface.
We first state and explain two results which are needed in the proof.
The first one is a formula for areas of wedges using the boundary expansions (see Theorem~\ref{thmrenormalizedformula}  and Corollary~\ref{correnormalizedformula}),   which has a similar form to the formula~\eqref{eq:classical_formula} for values of the classical Lagrange spectrum  in terms of the continued fraction entries. This formula 
allows to express large values in the spectrum as a sum of two Cantor sets.  A result proved in the original paper by Hall is that, under a technical condition, the sum of two Cantor sets contains an interval. 
The second result needed (Proposition~\ref{propsumcantorsets}) is that this condition is satisfied for the Cantor sets given by the formula in Theorem~\ref{thmrenormalizedformula}. After stating these two results, respectively in \S~\ref{subsec:formula} and \S~\ref{subsec:sumCantor}, we present in \S~\ref{subsec:finalarguments} the arguments which allow to construct the Hall's ray and hence prove Theorem~\ref{thm:HallVeech}.  


\subsection{A formula for areas of wedges}\label{subsec:formula}
 In order to state the formula for areas of wedges, we need to introduce some notation. Recall that  we write $A \in \pslduer$ to denote the equivalence class in  $\pslduer$ of a  matrix $A \in \slduer$. Consider the matrices $G_\alpha \in \pslduer$, $\alpha \in \cA$, introduced in~\S~\ref{subsection:boundary} as generators of $\Gamma$ and recall that we use the symbol $W_\alpha$  for the matrix
\[
	W_\alpha = \begin{pmatrix} 
				\Re(v^r_\alpha) 	& 	\Re(v^l_\alpha) \\ 
				\Im(v^r_\alpha) 	& 	\Im(v^l_\alpha) 
			\end{pmatrix}.
\]
For any fixed letter $\alpha\in\cA$, define the family of matrices
\begin{equation}\label{Amatricesdef}
	A_\beta^\alpha:= (W_\alpha^{-1}G_\alpha)\, G_\beta \, (W_\alpha^{-1}G_\alpha )^{-1}=W_\alpha^{-1}G_\alpha G_\beta G_\alpha^{-1} W_\alpha ,
 \qquad \beta \in \cA .\end{equation}
We will also need a second family of matrices. If $A^T$ as usual denotes the transpose matrix of $A$, set 
\begin{equation}\label{Bmatricesdef}
	B_\beta^\alpha:=(RW_\alpha^T) \, G_\beta^T \, (R W_\alpha^T)^{-1} = RW_\alpha^T G_\beta^T  (W_\alpha^T)^{-1} R,\qquad \beta\in \cA, \qquad \text{where}\  R=\left(\begin{smallmatrix}0&1 \\ 1&0\end{smallmatrix}\right). 
\end{equation}
Let us now define two \emph{continued fractions} based on these two families of matrices as follows.   Recall  that  we denote by  $x \mapsto A * x $ the action of $A$ on $\RR$ by M\"obius transformations.  Let $\alpha$ be a letter in the alphabet $\cA$ and assume  $\alpha, \ab{0}, \ab{1} ,\dots, \ab{n}$  satisfies the no-backtracking condition~\eqref{eqnobacktrack}. The  (finite) \emph{$\alpha$-forward continued fraction} associated to the word $\ab{0} \ab{1}\dots\ab{n}$ is 
\begin{equation}\label{forwardCFdef}
	[\ab{0},\ab{1}, \dots,\ab{n}]_\alpha^+:=A_{\ab{0}}^\alpha A_{\ab{1}}^\alpha \cdots A_{\ab{n-1}}^\alpha \left( W_{\alpha}^{-1} G_\alpha W_\ab{n}\right) * \infty.
\end{equation}
Similarly, the (finite) \emph{$\alpha$-backwards continued fraction} associated to the word $\ab{0}\ab{1}\dots\ab{n}$ is
\begin{equation}\label{backwardCFdef}
	[\ab{0},\ab{1}, \dots,\ab{n}]_\alpha^-:=B_{\ab{0}}^\alpha B_{\ab{1}}^\alpha \cdots B_{\ab{n}}^\alpha \left(R W_\alpha^T (W_{\ab{n}}^T)^{-1} R\right) *\infty.
\end{equation}
We will  show in the next section (see Lemma~\ref{lemconvergencecontinuedfraction}) that, having fixed an infinite word $(\ab{n})_{n\in\NN}$ such that $\ab{0}\neq \overline{\alpha}$, both the previous expressions converge as $n $ tends to infinity. Hence the following quantities, which will be called respectively the (infinite) $\alpha$-\emph{forward} and $\alpha$-\emph{backward continued fractions} with entries $(\ab{n})_{n\in\NN}$,  are well defined:
\begin{align*}
	[\ab{0}, \ab{1},\dots,\ab{n},\dots]_\alpha^+ &:= \lim_{n\to\infty} [\ab{0}, \ab{1},\dots,\ab{n}]_\alpha^+,\\
	[\ab{0}, \ab{1},\dots,\ab{n},\dots]_\alpha^- &:= \lim_{n\to\infty} [\ab{0}, \ab{1},\dots,\ab{n}]_\alpha^-.
\end{align*}
We can now state the formula which expresses areas of saddle connections in the wedges using $\alpha$-backward and forward continued fractions.

\begin{thm}[Continued fraction formula]\label{thmrenormalizedformula}
Let $(\ab{n})_{n \in \NN}$ be the boundary expansion of a direction $\theta$.
Given two sequences of numbers $x_n$ and $y_n$, let us write $x_n\sim y_n$ if $|x_n-y_n | \to 0$ as $n\to\infty$. 
We have
\begin{align*}
	\frac{1}{\area{\theta}{\vv{n}{r} }} &\sim \frac{1}{\det (W_{\ab{n}})} \left( [\ab{n-1},\dots,\ab{0}]_{\ab{n}}^- +[\ab{n+1},\ab{n+2},\dots]_{\ab{n}}^+\right),\\
	\frac{1}{\area{\theta}{\vv{n}{l} }} &\sim \frac{1}{\det (W_{\ab{n}})} \left( \frac{1}{[\ab{n-1},\dots,\ab{0}]_{\ab{n}}^-} %
							+ \frac{1}{ [\ab{n+1},\ab{n+2},\dots]_{\ab{n}}^+} \right).
\end{align*}
\end{thm}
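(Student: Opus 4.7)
The plan is to rewrite $\area{\theta}{v}$ in an $\SL(2,\RR)$-invariant form, renormalize by $g_n:=G_{\ab{0}}\cdots G_{\ab{n-1}}$ (which sends the reference wedge $W_{\ab{n}}$ onto $\W{n}{\theta}$), and then identify the renormalized directions of $\theta$ and $\theta_\perp$ with the forward and backward continued fractions. The starting identity is
\[
\area{\theta}{v}=\frac{\lvert\det(v,u_{\theta_\perp})\rvert\cdot\lvert\det(u_\theta,v)\rvert}{\lvert\det(u_\theta,u_{\theta_\perp})\rvert},
\]
valid for any nonzero $u_\theta,u_{\theta_\perp}$ in the directions $\theta,\theta_\perp$ and manifestly invariant under $\SL(2,\RR)$. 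Applying $g_n^{-1}$ replaces $\vv{n}{r}$ and $\vv{n}{l}$ by $v^r_{\ab{n}}$ and $v^l_{\ab{n}}$, while $u_\theta,u_{\theta_\perp}$ are sent to $\tau_n:=g_n^{-1}u_\theta$ and $\tau_n^\perp:=g_n^{-1}u_{\theta_\perp}$.

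Next I would pass to wedge-adapted coordinates by setting $\widetilde\tau_n:=W_{\ab{n}}^{-1}\tau_n$ and $\widetilde\tau_n^\perp:=W_{\ab{n}}^{-1}\tau_n^\perp$, so that $v^r_{\ab{n}}$ and $v^l_{\ab{n}}$ become the standard basis vectors $e_1$ and $e_2$. Expanding the three $2\times 2$ determinants in the coordinates of $\widetilde\tau_n$ and $\widetilde\tau_n^\perp$, factoring $\det(W_{\ab{n}})$ out of each, and simplifying, one obtains
\[
\area{\theta}{\vv{n}{r}}=\frac{\det(W_{\ab{n}})}{\lvert s_n-s_n^\perp\rvert},\qquad \area{\theta}{\vv{n}{l}}=\frac{\det(W_{\ab{n}})}{\lvert 1/s_n-1/s_n^\perp\rvert},
\]
where $s_n$ and $s_n^\perp$ are the slopes (ratio of first to second entry) of $\widetilde\tau_n$ and $\widetilde\tau_n^\perp$ respectively. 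Taking reciprocals, the theorem reduces to showing that as $n\to\infty$
\[
s_n\to[\ab{n+1},\ab{n+2},\dots]_{\ab{n}}^+,\qquad -s_n^\perp\to[\ab{n-1},\dots,\ab{0}]_{\ab{n}}^-,
\]
with both limits positive, so that $\lvert s_n-s_n^\perp\rvert$ becomes exactly the sum appearing in the statement (and the analogous identity for the $l$-vector uses the reciprocals).

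The forward limit is straightforward: by Lemma~\ref{properties_boundary} the arcs $\Arc[\ab{0},\dots,\ab{n+k}]=g_n\cdot G_{\ab{n}}\cdots G_{\ab{n+k-1}}\Arc[\ab{n+k}]$ shrink to $\xi(\theta)$, so after pulling back by $g_n$ the partial products $G_{\ab{n}}\cdots G_{\ab{n+k-1}}W_{\ab{n+k}}*\infty$ tend to the boundary image of $g_n^{-1}\xi(\theta)$; an additional conjugation by $W_{\ab{n}}^{-1}G_{\ab{n}}$ produces exactly the product~\eqref{forwardCFdef}. For the backward limit, the point $\xi(\theta_\perp)=-\xi(\theta)$ is, by Remark~\ref{directions}, the $t\to-\infty$ endpoint of the geodesic $(g^\theta_t S)_{t\in\RR}$, whose cutting sequence is the time-reversal of the forward one through the past letters $\ab{n-1},\dots,\ab{0}$. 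At the level of M\"obius matrices, reversing a product corresponds to transposition, and conjugating by the reflection $R=\bigl(\begin{smallmatrix}0&1\\1&0\end{smallmatrix}\bigr)$ to swap the roles of $0$ and $\infty$ produces exactly the matrices $B^\alpha_\beta$ of~\eqref{Bmatricesdef}.

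The hardest step will be the backward identification, for two reasons. First, one must track signs and orientations carefully to confirm that $s_n$ and $-s_n^\perp$ both converge to \emph{positive} limits — otherwise the two continued fractions would cancel rather than add; this rests on the positive orientation of each wedge basis $(v^r_\alpha,v^l_\alpha)$ together with the fact that $\xi(\theta)$ and $\xi(\theta_\perp)$ lie in opposite sectors of the renormalized picture. Second, convergence of the backward continued fractions is less transparent than that of the forward ones: since the defining products $B^\alpha_{\ab{0}}B^\alpha_{\ab{1}}\cdots B^\alpha_{\ab{n}}$ extend on the right by matrices whose composition reflects an increasingly long \emph{initial} word of the cutting sequence, the nested-arc argument does not apply verbatim; I would handle this by matching the truncated backward products to Bowen--Series cutting sequences of the \emph{reversed} geodesic through the origin and reapplying Lemma~\ref{properties_boundary} in that dual picture.
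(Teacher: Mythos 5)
Your decomposition of the area is exactly right and, once unwound, coincides with the paper's: after applying $g_n^{-1}$ and passing to $W_{\ab{n}}^{-1}$-coordinates, one checks that $s_n=-\re_\theta(\vv{n}{l})/\re_\theta(\vv{n}{r})$ and $-s_n^\perp=\im_\theta(\vv{n}{l})/\im_\theta(\vv{n}{r})$, which is precisely the split in the paper's identity~\eqref{eq:inverseareas}. What differs is how one identifies those two ratios with the continued fractions. The paper proceeds by a step-by-step iteration (Lemma~\ref{lem3renormalizedformula(transfpastfuture)} and Lemma~\ref{lem4renormalizedformula}) that explicitly expresses how each ratio transforms when $n$ increases by one; you instead pull back once and for all by $g_n^{-1}$ and then invoke the shrinking of the Bowen--Series arcs. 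For the forward part your route works cleanly and in fact yields the \emph{exact} identity $s_n=[\ab{n+1},\ab{n+2},\dots]^+_{\ab{n}}$ (your phrasing ``$s_n\to\cdots$'' slightly undersells this — the $\sim$ in the theorem is entirely produced by the backward term, not the forward one).

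The backward sketch is the part that doesn't survive as stated. The backward continued fraction $[\ab{n-1},\dots,\ab{0}]^-_{\ab{n}}$ is built from the reversed \emph{finite initial word} $\ab{n-1}\cdots\ab{0}$; it is \emph{not} a tail of the boundary expansion of $\xi(\theta_\perp)$, which is the cutting sequence of the ray from $O$ to $-\xi(\theta)$ and is an entirely unrelated infinite word. So ``matching the truncated backward products to cutting sequences of the reversed geodesic'' cannot work as an identification: there is no reversed geodesic whose itinerary produces $\ab{n-1},\dots,\ab{0}$ for every $n$ simultaneously. The correct mechanism is purely algebraic: writing $M_n:=W_{\ab{0}}^{-1}g_n W_{\ab{n}}$ and using $R M^T R=S M^{-1}S$ in $\pslduer$ (with $S=\diag(-1,1)$, $S\ast z=-z$), one gets
\[
	[\ab{n-1},\dots,\ab{0}]^-_{\ab{n}}=R M_n^T R\ast\infty=-W_{\ab{n}}^{-1}g_n^{-1}W_{\ab{0}}\ast\infty,
\]
while $-s_n^\perp=-W_{\ab{n}}^{-1}g_n^{-1}\ast\tan\theta_\perp$. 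So both are images of \emph{fixed} slopes under the \emph{same} M\"obius map $-W_{\ab{n}}^{-1}g_n^{-1}$; the asymptotic $\sim$ then follows from the contraction of the products established in Lemma~\ref{lemconvergencecontinuedfraction}, once one has checked that both seeds lie on the correct side so they fall in the nested shrinking intervals. Your orientation-and-sign concern is therefore well placed, but the resolution is linear-algebraic (transpose plus conjugation by $R$), not a dual Bowen--Series picture — which is exactly the role the paper assigns to Lemma~\ref{lem4renormalizedformula}.
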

The proof of Theorem~\ref{thmrenormalizedformula} is given in \S~\ref{sec:renormalized_formula}, where  
we also deduce the following Corollary. 


\begin{cor}\label{correnormalizedformula}
Let $(\ab{n})$ be the boundary expansion of a direction $\theta$. For any $\alpha \in \cA$, there exists $\mu_\alpha>0$ such that the following holds. Let $\alpha \gamma_1 \cdots \gamma_{p-1}$ be the unique right cuspidal word which starts with $\alpha$.  Assume that there are arbitrarily large $n$ such that
\[
	\ab{n}=\alpha, \ab{n+1}=\gamma_1,\cdots,\ab{n+p-1}=\gamma_{p-1},  \ab{n+p}=\alpha.
\]
For such times\footnote{Since we require the extra condition $\ab{n+p}=\alpha$, that is the finite word $\ab{n}\dots\ab{n+p}$ is part of a bigger cuspidal word, we are looking at areas of wedges at instants which do not necessarily coincide with the instants of the parabolic acceleration.} we have
\[
	\frac{1}{\area{\theta}{\vv{n}{r} }} \sim \frac{1}{\det (W_{\alpha})} \left( [\ab{n-1},\dots,\ab{0}]_{\alpha}^- +\mu_\alpha %
							+[\ab{n+p+1},\ab{n+p+2},\dots]_{\alpha}^+\right).
\]
\end{cor}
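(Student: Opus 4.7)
\medskip

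\noindent\textbf{Proof proposal.}
The plan is to derive the Corollary directly from Theorem~\ref{thmrenormalizedformula} by showing that, when the boundary expansion contains the block $\alpha\gamma_1\cdots\gamma_{p-1}\alpha$ at position $n$, the forward continued fraction at that time splits off a constant translation corresponding to the parabolic element which fixes the cusp. Applying Theorem~\ref{thmrenormalizedformula} at a time $n$ with $\ab{n}=\alpha$ gives
\[
\frac{1}{\area{\theta}{\vv{n}{r}}}\sim \frac{1}{\det(W_\alpha)}\Bigl([\ab{n-1},\dots,\ab{0}]_\alpha^- + [\ab{n+1},\ab{n+2},\dots]_\alpha^+\Bigr),
\]
so everything reduces to rewriting $[\ab{n+1},\ab{n+2},\dots]_\alpha^+=[\gamma_1,\dots,\gamma_{p-1},\alpha,\ab{n+p+1},\dots]_\alpha^+$ as $\mu_\alpha+[\ab{n+p+1},\ab{n+p+2},\dots]_\alpha^+$ for a constant $\mu_\alpha>0$ depending only on $\alpha$.

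For the splitting I would use the telescoping built into the definition of $A_\beta^\alpha$ in~\eqref{Amatricesdef}: for any admissible string $\beta_1,\dots,\beta_k$ one checks directly from $A_\beta^\alpha=W_\alpha^{-1}G_\alpha G_\beta G_\alpha^{-1}W_\alpha$ that
\[
A_{\beta_1}^\alpha A_{\beta_2}^\alpha\cdots A_{\beta_k}^\alpha = W_\alpha^{-1}\bigl(G_\alpha G_{\beta_1}G_{\beta_2}\cdots G_{\beta_k}G_\alpha^{-1}\bigr)W_\alpha.
\]
Applied to the block $\gamma_1,\dots,\gamma_{p-1},\alpha$ the terminal $G_\alpha G_\alpha^{-1}$ collapses, producing
\[
P := A_{\gamma_1}^\alpha\cdots A_{\gamma_{p-1}}^\alpha A_\alpha^\alpha = W_\alpha^{-1}\bigl(G_\alpha G_{\gamma_1}\cdots G_{\gamma_{p-1}}\bigr)W_\alpha.
\]
Truncating the $\alpha$-forward continued fraction at level $N>p$ and reading off~\eqref{forwardCFdef} then yields
\[
[\gamma_1,\dots,\gamma_{p-1},\alpha,\ab{n+p+1},\dots,\ab{n+N}]_\alpha^+ = P * [\ab{n+p+1},\dots,\ab{n+N}]_\alpha^+,
\]
and letting $N\to\infty$, using the convergence of $\alpha$-continued fractions (\ref{lemconvergencecontinuedfraction}) together with continuity of M\"obius transformations, gives the same identity in the limit.

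The final ingredient identifies $P$ as a translation. Since $\alpha\gamma_1\cdots\gamma_{p-1}$ is the parabolic word of the right cuspidal sequence starting with $\alpha$, Lemma~\ref{parabolic_in_cuspidal}(1) says that $G:=G_\alpha G_{\gamma_1}\cdots G_{\gamma_{p-1}}$ is parabolic and fixes $v^r_\alpha$. Because $W_\alpha e_1=v^r_\alpha$, the conjugate $P=W_\alpha^{-1}GW_\alpha$ fixes $e_1$; being parabolic it must therefore have the form $\left(\begin{smallmatrix}1&\mu_\alpha\\ 0&1\end{smallmatrix}\right)$ for a single real number $\mu_\alpha$, and as a M\"obius transformation $P$ acts by $y\mapsto y+\mu_\alpha$. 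Substituting this into the splitting above and then into the formula from Theorem~\ref{thmrenormalizedformula} gives exactly the Corollary. The only non-mechanical point is the sign claim $\mu_\alpha>0$: this I expect to be the main obstacle, but it is ultimately an orientation check, since $P$ is conjugate to a nontrivial parabolic generator of $\Gamma$ and the sign is determined by the convention that the wedge $W_\alpha$ is positively oriented and that the cusp is approached by the boundary expansion in the direction dictated by~\eqref{action_g_on_labels}; fixing these conventions forces the translation $\mu_\alpha$ to be strictly positive.
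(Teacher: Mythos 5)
Your proposal follows the same route as the paper: apply Theorem~\ref{thmrenormalizedformula}, telescope the product $A^\alpha_{\gamma_1}\cdots A^\alpha_{\gamma_{p-1}}A^\alpha_\alpha = W_\alpha^{-1}G_\alpha G_{\gamma_1}\cdots G_{\gamma_{p-1}}W_\alpha$, and use Lemma~\ref{parabolic_in_cuspidal}(1) to identify this as a unipotent upper-triangular matrix acting by $y\mapsto y+\mu_\alpha$. That part is sound and is precisely the content the paper isolates as Lemma~\ref{propertiesmatrices}(3); you re-derive it inline rather than citing it, which is fine.

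The one real gap is exactly where you flag it: the assertion $\mu_\alpha>0$ is not actually proved. You say it is "ultimately an orientation check" tied to~\eqref{action_g_on_labels}, but that is vague and not a derivation. The paper's argument is sharper and worth stating: the word $\alpha\gamma_1\cdots\gamma_{p-1}\alpha$ satisfies the no-backtracking condition~\eqref{eqnobacktrack}, so by Lemma~\ref{lemproductmatricesA} the product $W_\alpha^{-1}G_\alpha G_{\gamma_1}\cdots G_{\gamma_{p-1}}W_\alpha$ is a \emph{positive} matrix (a representative with all entries $\geq 0$); once one knows it has the form $\left(\begin{smallmatrix}1&\mu_\alpha\\0&1\end{smallmatrix}\right)$, positivity forces $\mu_\alpha\geq 0$, and $\mu_\alpha=0$ is excluded since $G_\alpha G_{\gamma_1}\cdots G_{\gamma_{p-1}}$ is a nontrivial parabolic in a torsion-free Fuchsian group. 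In short, the sign comes from the cone-nesting property of Lemma~\ref{lemproductmatricesA}, not from an ad hoc convention; you should replace the "orientation check" sentence with this argument to close the gap.
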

Remark that this formula is very similar to the classical formula~\eqref{eq:classical_formula}, in the sense that it is the sum of a term which depends only on the \emph{past} of the symbolic coding and a term which depend only on the \emph{future}. 
Moreover, the constant $\mu_\alpha$ in the statement is the shear of the parabolic matrix given at point~(3) of Lemma~\ref{propertiesmatrices}


\subsection{Sums of Cantor sets}\label{subsec:sumCantor}
The formula in the previous section, as in the classical case, allows to express large values in the spectrum as sum of the following two Cantor sets.  Let us now fix an arbitrary positive integer $N$.
We call $\cK_N$ the set of all sequences $(\ab{n})_{n \in \NN}$ which satisfy the no-backtracking condition~\eqref{eqnobacktrack} and do not contain any cuspidal word of length $N$. 
Fix a letter $\alpha\in\cA$ and call $\cK_{N}^{\alpha}$ the subset of sequences  $(\ab{n})_{n \in \NN} \in \cK_N$ whose first letter $\ab{0} \neq \overline{\alpha}$. 
We introduce the following subsets of the real line
\begin{align*}
	\KK_{N,\alpha}^+ & =\{ [\ab{0},\ab{1}, \dots,\ab{n},\dots]_\alpha^+, \text{ with } (\ab{n})_{n\in\NN}\in\cK_{N}^{\alpha}\} ;\\
	\KK_{N,\alpha}^- & =\{ [\bb{0},\bb{1}, \dots,\bb{n},\dots]_\alpha^-, \text{ with } (\bb{n})_{n\in\NN}\in\cK_{N}^{\alpha}\} .
\end{align*}
One can show that actually $ \KK_{N,\alpha}^\pm \subset \RR^+= [0, \infty)$ (see Lemma~\ref{propertiesmatrices}, Part $(5)$).  We will show in \S~\ref{sec:Cantorproof} that $ \KK_{N,\alpha}^+ $ and $ \KK_{N,\alpha}^-$ are Cantor sets. 
The final ingredient we need for the proof of the existence of the Hall ray is the following Proposition, which 
 will be proved in \S~\ref{sec:Cantorproof}.

\begin{prop}\label{propsumcantorsets}
For any $\alpha\in\cA$ and any $\mu>0$ there exists a positive integer $N$ such that 
the sum of the Cantor sets $\KK^+_{N,\alpha}$ and $\KK^-_{N,\alpha}$ is an interval of size at least $\mu$, that is, if $m^\pm, M^\pm$ denote respectively the minimum and the maximum of  $\KK^\pm_{N,\alpha}$, we have
\[
	\KK^+_{N,\alpha} +\KK^-_{N,\alpha}=  \left[ m^+ + m^-, M^+ + M^- \right] \quad \text{and} \quad M^+ + M^- - (m^+ + m^-)>\mu. 
\]
\end{prop}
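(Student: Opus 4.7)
The plan is to adapt the classical strategy of Hall~\cite{Hall} to our Cantor sets $\KK^\pm_{N,\alpha}$. The key tool is Hall's lemma on sums of Cantor sets: if $C_1, C_2 \subset \RR$ are Cantor sets expressed as nested intersections $C_i = \bigcap_n \bigcup_j I^{(i)}_{n,j}$ of disjoint closed \emph{bridges}, and if at every level each new \emph{gap} removed in $C_1$ has length at most that of the two adjacent bridges of $C_2$ (and symmetrically), then $C_1 + C_2 = [\min C_1 + \min C_2,\, \max C_1 + \max C_2]$.

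The first step would be to exhibit $\KK^\pm_{N,\alpha}$ as Cantor sets with an explicit bridge structure. For each admissible finite word $\ab{0}\ab{1}\cdots\ab{n}$ (satisfying no-backtracking~\eqref{eqnobacktrack}, with $\ab{0}\neq\overline{\alpha}$, and containing no cuspidal subword of length $N$), define the bridge $I^+_n(\ab{0}\cdots\ab{n})$ to be the closure of the set of continued-fraction values $[\ab{0},\ldots,\ab{n},\gamma_{n+1},\ldots]^+_\alpha$ obtained by all admissible continuations $(\gamma_j)_{j>n}$. Writing $I^+_n(\ab{0}\cdots\ab{n})$ as the image under the M\"obius action of $A^\alpha_{\ab{0}}\cdots A^\alpha_{\ab{n-1}}$ of a bridge at level one, and using that the matrices $A^\alpha_\beta$ act as uniform M\"obius contractions on the relevant region (away from the parabolic fixed directions), one obtains a decreasing family of unions of closed intervals whose diameters shrink to zero, and whose intersection is $\KK^+_{N,\alpha}$. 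The analogous argument with the matrices $B^\alpha_\beta$ works for $\KK^-_{N,\alpha}$.

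The core step, and the one I expect to be the main obstacle, is verifying Hall's gap-to-bridge condition. Gaps in $\KK^\pm_{N,\alpha}$ arise from two sources: (a) extensions forbidden by no-backtracking, producing a fixed combinatorial family of gaps independent of $N$; and (b) extensions that would create a cuspidal subword of length $N$, corresponding geometrically to parabolic excursions into a cusp. The key quantitative estimate is that parabolic matrices associated to cuspidal words (identified in Lemma~\ref{parabolic_in_cuspidal}) contract transverse directions only \emph{sub}-geometrically, so the gaps of type (b) become arbitrarily small relative to their adjacent bridges as $N\to\infty$. For the finitely many type-(a) gaps one regroups the Cantor construction at a uniformly coarser level, as in Hall's original argument, in order to make adjacent bridges long compared with these fixed gaps. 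The delicate point is balancing these two regimes simultaneously through a careful choice of $N$ and of the level of regrouping; this is where the technical estimates of \S~\ref{sec:Cantorproof} on lengths of M\"obius-image intervals enter.

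Once Hall's condition holds, $\KK^+_{N,\alpha} + \KK^-_{N,\alpha} = [m^+ + m^-,\, M^+ + M^-]$. For the length estimate, I would observe that sequences containing cuspidal subwords of length $N-1$ are still admissible, and that their continued-fraction values approach, as the length of the cuspidal subword grows, the parabolic fixed point of the corresponding matrix product, which lies at $+\infty$ in the coordinate chosen so that $\KK^\pm_{N,\alpha} \subset [0,+\infty)$. Therefore $M^\pm\to+\infty$ as $N\to\infty$, and for every $\mu>0$ one can choose $N$ large enough that both Hall's condition holds and $M^+ + M^- - (m^+ + m^-) > \mu$, concluding the proof.
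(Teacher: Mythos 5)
Your plan follows essentially the same route as the paper: encode $\KK^\pm_{N,\alpha}$ as Cantor sets via nested cylinder intervals indexed by admissible words, verify Hall's gap condition using distortion estimates for M\"obius maps (this is the paper's Lemma~\ref{lemdistortionhomography}), observe that as $N\to\infty$ the first-generation holes shrink while the extremal continued fractions escape towards the parabolic fixed point at $+\infty$ so that $M^\pm\to+\infty$, and conclude by applying Hall's theorem on sums of Cantor sets.

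One point is off, though not fatally: there are no gaps arising from the no-backtracking condition, i.e.\ your ``type (a)'' gaps do not exist. At every level the $2d-1$ cylinder intervals $I^\pm_\alpha(\ab{1},\dots,\ab{k},\beta)$ over admissible $\beta\neq\overline{\ab{k}}$ tile the parent interval exactly (this is~\eqref{positivedecomp} combined with the positivity statements in Lemma~\ref{propertiesmatrices}); the image of the forbidden continuation $\beta=\overline{\ab{k}}$ lies entirely outside $[0,+\infty]$ and so leaves no hole. Every hole of $\KK^\pm_{N,\alpha}$ comes from deleting cylinders whose coding contains a cuspidal word of length $N$, and these deleted intervals accumulate around the $2d-1$ interior breakpoints of the level-one subdivision and their $A_{\ab{1}}\cdots A_{\ab{k}}$-images (Lemma~\ref{lemdescriptionholes}). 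Consequently your proposed ``regrouping at a coarser level'' to handle fixed no-backtracking gaps, while harmless, addresses a non-issue, and the argument reduces to a single scale: choose $N$ so large that the cuspidal holes become small relative to their neighbouring bridges (Lemmas~\ref{lemgapconditionfuture} and~\ref{lemgapconditionpast}) and the spread $M^++M^--m^+-m^-$ exceeds $\mu$.
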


\subsection{Concluding arguments for the existence of a Hall ray.}\label{subsec:finalarguments}
Before starting the proof of Thereom~\ref{thm:HallVeech} we want to stress the fact that we will prove the existence of a Hall's ray \emph{relative to} any given cusp,  in the sense that we will show that for any given $\xi$ ideal vertex of  $D$ there is an interval $[r, +\infty] \subset \cL$ whose values can be attained by excursions in the cups associated to $\xi$ (see Remark~\ref{cuspdescription}).

\begin{proofof}{Theorem}{thm:HallVeech}
Fix any $\alpha \in \mathbb{A}$. Let $\alpha {\gamma_1} \dots  {\gamma_{p-1}}$ be the unique right parabolic word starting with $\alpha$ and let  $\mu:=\mu_\alpha$ be given by Corollary~\ref{correnormalizedformula}.
By Proposition~\ref{propsumcantorsets} there exists an integer $\overline{N}$ so that, for $N\geq\overline{N}$, we have that the size of $\KK_{N, \alpha}^+ + \KK_{N,\alpha}^-$ is greater than $\mu$. For brevity, let us write $\cK:= \cK_{N}^\alpha$ and let us denote by $\mathbb{K}^\pm:= \mathbb{K}_{N,\alpha}^\pm$ the corresponding Cantor sets with maxima $M^\pm$ and minima $m^\pm$. We will show that 
\begin{equation}\label{choicer}
[r, + \infty] \subset \cL, \qquad \text{for any }  r>
 \frac{4M_0M_{N+1}}{c(S)^2},
\end{equation}
 where $c(S)$ is the constant in Lemma~\ref{Veech:lemma} and $M_0$, $M_N$ are defined by~\eqref{Mndef}. 
  Fix any real number $L \geq r$ and consider the number $\det(W_{\alpha}) L$. By Proposition~\ref{propsumcantorsets}, since $\mathbb{K}^++\mathbb{K}^-$ contains an interval of length at least $\mu$, we can write $ \det(W_\alpha) L = K \mu + l $ where  $ l  \in \mathbb{K}^++\mathbb{K}^-$. Thus, by definition of $\mathbb{K}^\pm$, there exist $(\ab{n})_{n\in\NN}$ and $(\bb{n})_{n\in\NN}$ in $ \cK$  such that 
\beq \label{sumCantor}
	L= \frac{1}{{\det (W_{\alpha})}}   \left( [\bb{0},\bb{1},\dots,\bb{n}, \dots ]_{\alpha}^- + K \mu 							+[\ab{0},\ab{1},\dots ,\ab{n},\dots]_{\alpha}^+\right).
\eeq

Let us now construct an infinite word $(\cc{n})_{n \in \NN}$ that will give the boundary expansion of an angle $\theta$ such that $L(\theta) = L$. 
 Let us define blocks of entries $C_j$, $j \in \NN$, which we will then concatenate to form the word $(\cc{n})_{n \in \NN}$. Set
\[
	C_{j} = {\bb{j}} \dots {\bb{0}} ( \alpha \gamma_1 \dots \gamma_{p-1})^K \alpha \ab{0} \dots \ab{j} , \qquad j \in \mathbb{N},
\]
where $ (\alpha, \gamma_1, \dots, \gamma_{p-1})^K$ means that the block $\alpha, \gamma_1, \dots, \gamma_{p-1}$ is repeated $K$ times.  Remark that, by definition of the Cantor sets and $\cK$, we have that $\ab{0} \neq \overline{\alpha}$ and $\bb{0} \neq \overline{\alpha}$  and thus $C_k$  satisfy the no-backtracking condition~\eqref{eqnobacktrack}.  Let us choose letters to interpolate between $C_j$ and $C_{j+1}$ as follows. Since the alphabet $\cA$ has cardinality $2d>3$, we can pick  $\delta_j$ such that $\delta_j \neq \overline{\ab{j}}$ and $\ab{j} \delta_j$ is not a cuspidal word and then  $\delta'_j$ such that $\delta'_j \neq \overline{\delta_{j}}$, $\delta'_j \neq \overline{\bb{j+1}}$ and $\delta'_j {\bb{j+1}}$ is not a cuspidal word. Thus, the sequence
\begin{equation}\label{sequencenoN}
	\ab{0} \dots \ab{j} \delta_j \delta_j' \bb{j+1} \dots \bb{0}
\end{equation}
satisfies the no-backtracking condition~\eqref{eqnobacktrack} and, by definition of $\cK$ and choice of $\delta_j, \delta_j'$ does not contain any parabolic word of length $N$. It follows that the infinite word $(\cc{n})_{n \in \NN} $ obtained justapposing the blocks $C_j, \delta_j, \delta_j'$ in increasing order of $j \in \mathbb{N}$ satisfies the no-backtracking condition or, in other words, is actually a boundary expansion of some angle $\theta$.

To show that $L(\theta)=L$, let $(\W{n}{\theta})_{n \in \NN}$  be the wedges associated to $\theta$ and recall that, by Theorem~\ref{boundary_see_largeL}, we have that $	L(\theta) = \limsup_{n \to \infty} 1/{\areaW{\theta}{\W{n}{\theta}}}$,  where $\areaW{\theta}{\W{n}{\theta}}$ is the minimum area with respect to $\theta$ of the vectors in the wedge $\W{n}{\theta}$. Let $(n_k)_{k\in \NN}$ be the sequence of accelerated times. Remark that since  $(\alpha \gamma_1, \dots, \gamma_{p-1})^K \alpha$ is cuspidal, it  is contained in a unique maximal cuspidal word. For each $j \in \mathbb{N}$, let $k_j$ be such that the word $(\alpha,\gamma_1 \dots, \gamma_{p-1})^K \alpha$  in $C_j$ is contained in  the $k_j^\text{th}$ cuspidal word of the decomposition (that is the cuspidal word $\cc{n_{k_j}},\cc{n_{k_j}+1},  \dots, \cc{n_{k_j+1}-1}$). Since we are assuming that $\gamma_1, \dots, \gamma_p$ is right parabolic, the cuspidal word which contains it is right cuspidal and, by Lemma~\ref{lemelementarywords}, we have that $\vv{n}{r}= \vv{n_{k_j}}{r}$ for any $n_{k_j}\leq n < n_{k_j+1}$. Thus, we can compute $\area{\theta}{\vv{n_{k_j}}{r}}$ by evaluating  $\area{\theta}{\vv{n}{r}}$ for $n$ which corresponds to the first occurence of $\alpha$ in the block $(\alpha \gamma_1, \dots, \gamma_{p-1})^K \alpha$ in $C_j$  and applying Corollary~\ref{correnormalizedformula}.  As  $j \to \infty$, recalling the form of the blocks $C_j$ and using  the convergence of $\alpha$-forward and $\alpha$-backward continued fractions  (see Lemma~\ref{lemconvergencecontinuedfraction}), we get 
\begin{equation}\label{limsupseq}
\lim_{j \to \infty }\frac{1}{ \area{\theta}{\vv{n_{k_j}}{r}}} = \frac{1}{\det (W_{\alpha})} \left( [\bb{0},\dots,\bb{n}, \dots ]_{\alpha}^- + K \mu + 
		+[\ab{0},\dots, \ab{n},\dots]_{\alpha}^+\right) = L,
\end{equation}
where the last equality is given  by~\eqref{sumCantor}.
On the other hand, by Proposition~\ref{propcontrolareaselemetarywords} (see~\ref{rightareas}), we have that for $n_{k_j}< n < n_{k_j+1}-1$
\[
\frac{1}{\area{\theta}{\vv{n}{l}}} \leq \frac{4M_0M_{2}}{c(S)^2} \leq  \frac{4M_0M_{N+1}}{c(S)^2}  <r \leq L,
\]
where the last inequalities follow since $M_2\leq M_{N+1}$ by Definition~\eqref{Mndef} of $M_n$ and by choices  of $r$ (see~\eqref{choicer})  and $L$. 
Furthermore, since by construction, for any $j$, the word~\eqref{sequencenoN} does not contain any cuspidal word of length greater than $N-1$, we have that for any $k_j+1 \leq k < k_{j+1}$, $N_k:=n_{k+1}-n_k\leq N-1$. Thus, again by Proposition~\ref{propcontrolareaselemetarywords}, we have that  
\begin{align*}
&\frac{1}{\area{\theta}{\vv{n}{r}}}\leq \frac{4M_0M_{N+1}}{c(S)^2}	<L  & \text{for}\  n_{k_j+1} \leq n < n_{k_{j+1}},\\
& \frac{1}{\area{\theta}{\vv{n}{l}}}\leq \frac{4M_0M_{N+1}}{c(S)^2} <L	& \text{for}\  n_{k_j+1} -1 \leq n \leq n_{k_{j+1}}.
\end{align*}
 This shows that the $\limsup$ which gives $L(\theta)$ is achieved by~\eqref{limsupseq} along the subsequence of times $(n_{k_j})_{j \in \mathbb{N}}$ and hence that  $L(\theta)=L$. 
\end{proofof}

\section{The formula via $\alpha$-backward and forward continued fractions}\label{sec:renormalized_formula}
In this section we prove Theorem~\ref{thmrenormalizedformula}. We first prove some preliminary Lemmas which are used to show the convergence of $\alpha$-backward and $\alpha$-forward continued fractions.

\subsection{Convergence of $\alpha$-backward and $\alpha$-forward continued fractions}
Recall that all matrices denote equivalence classes in $\pslduer$.  
We say that $A$ is \emph{positive} (\emph{strictly positive}) if there exists a representative of $A$ in its equivalence class in $\pslduer$ (that is a matrix $t A$ with $t \neq 0$) with  all entries \emph{positive} (\emph{strictly positive}). As usual, $A^T$ denotes the transpose matrix of $A$. 

\begin{lem}\label{lemproductmatricesA}
Let $\ab{0} \ab{1} \dots \ab{k}$ be any finite word.
The product
\begin{equation}\label{matrixproduct}
	W_{\ab{0}}^{-1} G_{\ab{0}}\dots G_{\ab{k-1}} W_{\ab{k}}
\end{equation}
is positive if and only if $\ab{0} \dots \ab{k}$ satisfies the no-backtracking condition~\eqref{eqnobacktrack}. Furthermore, if $\ab{0} \dots \ab{k}$ satisfies the no-backtracking condition~\eqref{eqnobacktrack}, the matrix product~\eqref{matrixproduct} 
is  strictly positive if and only if $\ab{0} \dots \ab{k}$ is \emph{not} a cuspidal word.
\end{lem}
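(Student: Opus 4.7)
The plan is to reformulate the claim geometrically. By~\eqref{eq:wedgesfromboundaryexpansion}, the product $G_{\ab{0}}\cdots G_{\ab{k-1}}W_{\ab{k}}$ is exactly (as a matrix whose columns are the wedge vectors) the wedge $\W{k}{\theta}$, so the lemma concerns $W_{\ab{0}}^{-1}\W{k}{\theta}$, i.e.\ the wedge $\W{k}{\theta}$ expressed in the ordered basis $(v^r_{\ab{0}},v^l_{\ab{0}})$. A matrix in $\PSL(2,\RR)$ admits a representative with non-negative (resp.\ strictly positive) entries if and only if, up to a global sign, its two columns lie in the closed (resp.\ open) first quadrant of $\RR^2$; dualizing by $W_{\ab{0}}$ this translates into the condition that both columns of $\W{k}{\theta}$ lie in the cone $\cW_{\ab{0}}$ (resp.\ in its interior).

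For the first equivalence, the ``if'' direction is essentially the nesting $\cW_k\subseteq\cW_0=\cW_{\ab{0}}$ already proved in Lemma~\ref{properties_boundary}(1). A direct inductive argument also works, based on the key identity $G_\alpha\cU_\alpha=\cW_\alpha$ from~\S~\ref{sec:wedges}: assuming by induction that $G_{\ab{1}}\cdots G_{\ab{k-1}}\cW_{\ab{k}}\subseteq\cW_{\ab{1}}$ (applied to the shorter no-backtracking word $\ab{1}\cdots\ab{k}$) and exploiting the condition $\ab{1}\neq\overline{\ab{0}}$ to get $\cW_{\ab{1}}\subseteq\overline{\cU_{\ab{0}}}$, one obtains $G_{\ab{0}}\cW_{\ab{1}}\subseteq G_{\ab{0}}\overline{\cU_{\ab{0}}}=\cW_{\ab{0}}$. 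For the converse, let $i$ be the smallest index with $\ab{i+1}=\overline{\ab{i}}$; inverting $G_{\ab{i}}\cU_{\ab{i}}=\cW_{\ab{i}}$ gives $G_{\ab{i}}\cW_{\overline{\ab{i}}}=\RR^2\setminus\cW_{\ab{i}}$, and combining this with the positive-orientation convention $\det(W_\alpha)>0$ one checks that $W_{\ab{i}}^{-1}G_{\ab{i}}W_{\overline{\ab{i}}}$ has off-diagonal entries of strictly opposite signs, which prevents any $\PSL$-representative from having all entries non-negative, and this obstruction is then propagated through the positive prefix $W_{\ab{0}}^{-1}G_{\ab{0}}\cdots G_{\ab{i-1}}W_{\ab{i}}$.

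For the second equivalence I would exploit the arc-cone dictionary given by the identification $\partial\DD\cong\mathbb{RP}^1$. A column of $W_{\ab{0}}^{-1}\W{k}{\theta}$ vanishes in one entry precisely when the corresponding wedge vector $\vv{k}{r}$ or $\vv{k}{l}$ is parallel to $v^r_{\ab{0}}$ or $v^l_{\ab{0}}$, which under the identification translates to the corresponding endpoint of the arc $\Arc[\ab{0},\ldots,\ab{k}]=G_{\ab{0}}\cdots G_{\ab{k-1}}\Arc[\ab{k}]$ coinciding with $\xi^r_{\ab{0}}$ or $\xi^l_{\ab{0}}$. Because the left (resp.\ right) endpoints of the nested arcs $\Arc[\ab{0},\ldots,\ab{i}]$ can only move strictly inward once they have left, such a coincidence occurs iff \emph{every} intermediate arc already shares that endpoint, which by Definition~\ref{defnestedarcs} is exactly the condition that $\ab{0}\cdots\ab{k}$ be a (left or right) cuspidal word. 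Hence strict positivity is equivalent to non-cuspidality.

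The step I expect to be the main obstacle is the converse direction in the first equivalence: after the first backtracking step the wedge cone exits $\cW_{\ab{i}}$, but one has to verify carefully that the sign mismatch cannot be ``undone'' by the subsequent group actions. This relies crucially on tracking the orientation conventions (positivity of $\det W_\alpha$ and the placement of the wedges in $\RR\times\RR_+$), which pin down signs up to the $\pm\Id$ ambiguity inherent in $\PSL(2,\RR)$; the rest of the lemma follows rather transparently from the cone and arc machinery already developed in \S~\ref{subsection:boundary} and \S~\ref{sec:wedges}.
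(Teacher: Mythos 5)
Your proposal follows essentially the same route as the paper's own proof: translate positivity of $W_{\ab{0}}^{-1}G_{\ab{0}}\cdots G_{\ab{k-1}}W_{\ab{k}}$ into the cone inclusion $G_{\ab{0}}\cdots G_{\ab{k-1}}\cW_{\ab{k}}\subseteq\cW_{\ab{0}}$ via the fact that $W_\alpha$ maps $\RR_+\times\RR_+$ onto $\cW_\alpha$; handle the two-letter case using the key identity $G_\alpha\cU_\alpha=\cW_\alpha$; iterate via the factorization $W_{\ab{0}}^{-1}G_{\ab{0}}\cdots G_{\ab{k}}W_{\ab{k+1}}=\bigl(W_{\ab{0}}^{-1}G_{\ab{0}}\cdots W_{\ab{k}}\bigr)\bigl(W_{\ab{k}}^{-1}G_{\ab{k}}W_{\ab{k+1}}\bigr)$; and read off strict positivity from the arc picture. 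The paper's proof is in fact extremely terse (it only spells out the two-letter computation, asserts that the induction is ``easy'', and leaves the cuspidal/strict-positivity part entirely to the reader), so your detailed arc-theoretic argument for the second equivalence genuinely fills a gap the paper does not fill.

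Your instinct that the converse of the first equivalence is the delicate step is correct, and in fact more than that: the converse is \emph{false} for words that are not reduced. Take $\ab{0}\ab{1}\ab{2}=\alpha\,\overline\alpha\,\alpha$; then $G_{\ab{0}}G_{\ab{1}}=G_\alpha G_\alpha^{-1}=\Id$, so $W_{\ab{0}}^{-1}G_{\ab{0}}G_{\ab{1}}W_{\ab{2}}=W_\alpha^{-1}W_\alpha=\Id$, which is positive in the paper's sense (it has a representative with all entries $\geq 0$), yet the word violates the no-backtracking condition at index $0$. More generally, any word containing a cancelling pair $\beta\overline\beta$ surrounded by a reduced word produces a positive product. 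This means your proposed repair (locate the first backtracking index and propagate the sign obstruction forward) cannot be made to work: a later cancellation really can ``undo'' the obstruction, exactly as you feared. The paper's terse induction likewise only establishes the ``if'' direction. This does not affect anything downstream, because the lemma is only ever invoked for words already known to satisfy no-backtracking (see Lemma~\ref{propertiesmatrices} and Lemma~\ref{lemconvergencecontinuedfraction}), so only the ``if'' direction plus the strict-positivity characterization are used. The honest formulation restricts to reduced words, drops the first ``only if'', and keeps your cuspidal-word argument for strictness, which is sound as written.
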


\begin{proof}
Recall that for any $\alpha \in \cA$, by definition, $W_\alpha$ sends the cone $\RR_+\times\RR_+$ onto the cone $\cW_\alpha$. In order to prove the first statement, consider a word with two letters $\ab{0}\ab{1}$. If $\overline{\ab{0}}\not=\ab{1}$ then $\cW_{\ab{1}}\subset\cU_{\ab{0}}= \RR^2\setminus\cW_{\overline{\ab{0}}}$,
so that $G_{\ab{0}}\cW_{\ab{1}}\subset\cW_{\ab{0}}$
and thus
\[
	W_{\ab{0}}^{-1}G_{\ab{0}} W_{\ab{1}} \big(\RR_+\times\RR_+\big)\subset\RR_+\times\RR_+.
\]
On the other hand $G_{\overline{\ab{0}}}=G_{\ab{0}}^{-1}$ so that $G_{\ab{0}}(\cW_{\overline{\ab{1}}})=\cU_{\ab{0}}$ and thus
\[
	W_{\ab{0}}^{-1}G_{\ab{0}}W_{\overline{\ab{0}}}\big(\RR_+\times\RR_+\big)=\RR_-\times\RR_+.
\]
Therefore the statement (1) is proved for $k=2$. One can easily prove it inductively for any $k$ observing that we have the factorization
\begin{equation}\label{factorization}
	W_{\ab{0}}^{-1}G_{\ab{0}} G_{\ab{1}}\dots G_{\ab{k}}W_{\ab{k+1}}=%
		W_{\ab{0}}^{-1}G_{\ab{0}}\dots G_{\ab{k-1}}W_{\ab{k}}W_{\ab{k}}^{-1}G_{\ab{k}}W_{\ab{k+1}}.
\end{equation}
The second part of the statement is easy to prove and we leave it to the reader.
\end{proof}

Fix a letter $\alpha$ in $\cA$. In the previous section we have introduced two  families of matrices, $A_\beta^\alpha$ and $B_\beta^\alpha$ as $\beta$ varies in $\cA$, see~\eqref{Amatricesdef} and~\eqref{Bmatricesdef}.  The next Lemma  summarizes the properties we need for $\alpha$-forward and  $\alpha$-backward continued fractions, which all follow easily from the previous Lemma.

\begin{lem}\label{propertiesmatrices}
Let $\alpha \ab{0}\dots\ab{k}$ any finite word that satisfies the  no-backtracking condition~\eqref{eqnobacktrack}.
\begin{enumerate}
\item The product  
$A^\alpha_{\ab{0}}A^\alpha_{\ab{1}} \cdots A^\alpha_{\ab{k-1}} W_{\alpha}^{-1} G_\alpha W_\ab{k}$
is positive and it is strictly positive if and only if $\alpha \ab{0}\dots\ab{k}$ is not a cuspidal word.
\item \label{productmatricesB}
The product 
$B^\alpha_{\ab{0}}B^\alpha_{\ab{1}} \cdots B^\alpha_{\ab{k}} R W_\alpha^T (W_{\ab{k}}^T)^{-1} R $
is positive  and it is strictly positive if and only if $\alpha \ab{1}\dots\ab{k}$ is not a cuspidal word.
\item For any parabolic word $\gamma_{0} \gamma_{1}\dots \gamma_{p-1}$ there exists $\mu>0$ such that
	\[
	A^{\gamma_{0}}_{\gamma_{1}} \dots A^{\gamma_{0}}_{\gamma_{p-1}} A^{\gamma_{0}}_{\gamma_{0}} = \begin{pmatrix} 
										1 & \mu \\
										0 & 1
									\end{pmatrix}.
\]
\item  For any $\alpha, \beta \in \cA$ we have
\[
0\leq W_\alpha^{-1} G_\alpha W_\beta * \infty \leq +\infty, \qquad   0\leq R W_\alpha^T  (W_\beta^T)^{-1} R * \infty \leq +\infty.
\]
\item The finite continued fractions $[\ab{0}, \ab{1}, \dots, \ab{k}]_\alpha^\pm \in [0,+\infty]$.
\end{enumerate}
\end{lem}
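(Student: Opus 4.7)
The plan is to reduce each assertion to Lemma~\ref{lemproductmatricesA} by exploiting the telescoping built into the definitions~\eqref{Amatricesdef} and~\eqref{Bmatricesdef}. Since each $A_\beta^\alpha$ is the conjugate of $G_\beta$ by the same matrix $W_\alpha^{-1}G_\alpha$, the inner inverses in the product $A^\alpha_{\ab{0}}A^\alpha_{\ab{1}}\cdots A^\alpha_{\ab{k-1}}$ cancel, yielding $(W_\alpha^{-1}G_\alpha)G_{\ab{0}}G_{\ab{1}}\cdots G_{\ab{k-1}}(W_\alpha^{-1}G_\alpha)^{-1}$. Multiplying on the right by $W_\alpha^{-1}G_\alpha W_{\ab{k}}$ and simplifying $(W_\alpha^{-1}G_\alpha)^{-1}W_\alpha^{-1}G_\alpha=\Id$ produces exactly $W_\alpha^{-1}G_\alpha G_{\ab{0}}\cdots G_{\ab{k-1}}W_{\ab{k}}$, which is of the form treated by Lemma~\ref{lemproductmatricesA} for the word $\alpha\,\ab{0}\,\ab{1}\cdots\ab{k}$. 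Since this word satisfies no-backtracking by hypothesis, the lemma immediately yields positivity and the stated equivalence between strict positivity and non-cuspidality, proving Part~(1).

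For Part~(2) one applies the identical strategy to $B_\beta^\alpha=(RW_\alpha^T)G_\beta^T(RW_\alpha^T)^{-1}$. The same telescoping, combined with the identity $(RW_\alpha^T)^{-1}=(W_\alpha^T)^{-1}R$ coming from $R^2=\Id$, collapses the full product to $R\,W_\alpha^T G_{\ab{0}}^T\cdots G_{\ab{k}}^T(W_{\ab{k}}^T)^{-1}R$. Transposing this matrix preserves the signs of entries, and conjugation by $R$ merely swaps rows and columns; consequently its positivity is equivalent to that of $W_{\ab{k}}^{-1}G_{\ab{k}}G_{\ab{k-1}}\cdots G_{\ab{0}}W_\alpha$. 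Lemma~\ref{lemproductmatricesA} applied to the reversed word $\ab{k}\,\ab{k-1}\cdots\ab{0}\,\alpha$ controls both positivity and strict positivity as announced.

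For Part~(3), specialize the telescoping of Part~(1) to $A^{\gamma_0}_{\gamma_1}\cdots A^{\gamma_0}_{\gamma_{p-1}}A^{\gamma_0}_{\gamma_0}$, which collapses to $W_{\gamma_0}^{-1}\g W_{\gamma_0}$ where $\g:=G_{\gamma_0}G_{\gamma_1}\cdots G_{\gamma_{p-1}}$. Lemma~\ref{parabolic_in_cuspidal} provides that $\g$ is a non-trivial parabolic element fixing the direction $\theta(v^r_{\gamma_0})$; since the shortest saddle connection in each parabolic direction is unique, $\g$ in fact fixes the vector $v^r_{\gamma_0}$ itself, which is the first column of $W_{\gamma_0}$. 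Therefore $W_{\gamma_0}^{-1}\g W_{\gamma_0}$ is a non-trivial parabolic element of $\PSL(2,\RR)$ fixing $(1,0)^T$, hence of the form $\bigl(\begin{smallmatrix}1&\mu\\0&1\end{smallmatrix}\bigr)$. The positivity of the telescoped product forces $\mu\geq 0$, and non-triviality of $\g$ rules out $\mu=0$.

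Finally, Parts~(4) and~(5) follow by direct inspection: any positive matrix $M=\bigl(\begin{smallmatrix}a&b\\c&d\end{smallmatrix}\bigr)\in\PSL(2,\RR)$ satisfies $M*\infty=a/c\in[0,+\infty]$. Part~(4) applies this observation to the one-step products $W_\alpha^{-1}G_\alpha W_\beta$ and $RW_\alpha^T(W_\beta^T)^{-1}R$, each positive by Lemma~\ref{lemproductmatricesA} and the argument for Part~(2) respectively. Part~(5) then follows by specializing Parts~(1)--(2) to the relevant word and invoking the same observation. The main obstacle I anticipate is pinning down the sign in Part~(3): positivity alone only yields $\mu\geq 0$, so to exclude $\mu=0$ one must crucially invoke the non-triviality of the parabolic generator $\g$; if needed, one can extend the parabolic word by one extra letter outside the cuspidal sequence and leverage the strict positivity of the resulting longer product from Part~(1).
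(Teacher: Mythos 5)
Parts (1)--(3) are correct and follow the paper's own strategy: telescope the conjugations to reduce to Lemma~\ref{lemproductmatricesA}, and for Part~(3) use Lemma~\ref{parabolic_in_cuspidal} to identify $W_{\gamma_0}^{-1}\g W_{\gamma_0}$ as a unipotent matrix fixing $(1,0)^T$. You are in fact slightly more careful than the paper on the point $\mu>0$ versus $\mu\geq 0$: the paper simply invokes Lemma~\ref{lemproductmatricesA} for the sign, whereas you correctly observe that non-triviality of the parabolic element $\g$ is what rules out $\mu=0$.

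Part~(4) has a genuine gap. You claim that both $W_\alpha^{-1}G_\alpha W_\beta$ and $RW_\alpha^T(W_\beta^T)^{-1}R$ are positive (the first by Lemma~\ref{lemproductmatricesA}, the second by ``the argument for Part~(2)''), so that the conclusion follows from $M*\infty=a/c$. This fails for two reasons. First, the statement is for \emph{all} $\alpha,\beta\in\cA$, including $\beta=\overline{\alpha}$; in that case the word $\alpha\overline{\alpha}$ violates the no-backtracking condition, and Lemma~\ref{lemproductmatricesA} actually shows $W_\alpha^{-1}G_\alpha W_{\overline{\alpha}}$ maps $\RR_+\times\RR_+$ into $\RR_-\times\RR_+$, so it is \emph{not} positive; the paper instead argues geometrically that $W_{\overline{\alpha}}*\infty=v_{\overline{\alpha}}^r$ generates $\cU_\alpha$, hence $G_\alpha$ sends it to a generator of $\cW_\alpha$, giving $W_\alpha^{-1}G_\alpha W_{\overline{\alpha}}*\infty\in\{0,+\infty\}$. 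Second, $RW_\alpha^T(W_\beta^T)^{-1}R$ does not contain a $G$-factor, so it is not of the form handled by Lemma~\ref{lemproductmatricesA} or Part~(2), and it is in general \emph{not} positive: writing $RW_\alpha^T(W_\beta^T)^{-1}R=\big(RW_\beta^{-1}W_\alpha R\big)^T$, the inner matrix $W_\beta^{-1}W_\alpha$ sends $\RR_+\times\RR_+$ into $\RR_+\times\RR_-\cup\RR_-\times\RR_+$ when $\alpha\neq\beta$ (since $\cW_\alpha\subset\RR^2\setminus\cW_\beta$), and conjugation by $R$ preserves this sign pattern. The paper's proof instead uses the identity $(A^T)^{-1}*z=-1/(A*(-1/z))$ to transfer the conclusion from $R W_\alpha^{-1}W_\beta R*0\in[-\infty,0]$ to $RW_\alpha^T(W_\beta^T)^{-1}R*\infty\in[0,+\infty]$. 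Your argument for Part~(5) then inherits the same gap in the case $n=0$, which is exactly where the paper has to fall back on Part~(4).
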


\begin{proof}
Part $(1)$ follows from Lemma~\ref{lemproductmatricesA} since, by Definition~\eqref{Amatricesdef} of the matrices $A^\alpha_{\alpha_i}= W_\alpha^{-1}G_\alpha G_{\alpha_i} G_\alpha^{-1}W_\alpha $, 
\begin{equation}\label{equalityAmatrices}
A^\alpha_{\ab{0}}A^\alpha_{\ab{1}} \cdots A^\alpha_{\ab{k-1}} W_{\alpha}^{-1} G_\alpha W_\ab{k} =
W_{\alpha}^{-1} G_\alpha G_{\ab{1}}  \dots G_{\ab{k-1}} W_{\ab{k}}. 
\end{equation} 
For Part $(2)$, observe that by Definition~\eqref{Bmatricesdef} of the matrices $B^\alpha_{\alpha_i}=  RW_\alpha^T G_{\alpha_i}^T  (W_\alpha^T)^{-1} R $ we have that
\begin{equation}\label{equalityBmatrices}
B^\alpha_{\ab{0}} \cdots B^\alpha_{\ab{k}} R W_\alpha^T {(W_{\ab{k}}^T)}^{-1} R  =	R W^T_{\alpha} G^T_{\ab{0}}\dots G^T_{\ab{k}} {(W^T_{\ab{k}})}^{-1} R =
		R \big(W_{\ab{k}}^{-1}G_{\ab{k}}\dots G_{\ab{0}} W_{\alpha}\big)^T R.
\end{equation}
Since  being positive is invariant under transposition and conjugation by $R$ and the sequence $\ab{k}\dots \ab{0} \alpha$ satisfies the no-backtracking condition, since $\alpha \ab{0} \dots \ab{k}$ does, also Part (2) follows from Lemma~\ref{lemproductmatricesA}. 
 
To prove Part $(3)$ recall that by Part $(1)$ in Lemma~\ref{parabolic_in_cuspidal} the product $G_{\gamma_0} G_{\gamma_{1}}\dots G_{\gamma_{p-1}}$ is parabolic and fixes the vector $v_{\gamma_0}^r$. Thus, by the equality in~\eqref{equalityAmatrices} we have
	\begin{align*}
		A^{\gamma_0}_{\gamma_{1}}\dots A^{\gamma_{0}}_{\gamma_{p-1}} A^{\gamma_0}_{\gamma_0}   \begin{pmatrix}1\\0 \end{pmatrix} %
			= W_{\gamma_0}^{-1}G_{\gamma_0} G_{\gamma_1}\dots G_{\gamma_{p-1}} W_{\gamma_0} \begin{pmatrix}1\\0 \end{pmatrix} 
			=  W_{\gamma_0}^{-1}G_{\gamma_0} G_{\gamma_1}\dots G_{\gamma_{p-1}} v_{\gamma_0}^r  = W_{\gamma_0}^{-1}v_{\gamma_0}^r= \begin{pmatrix}1\\0 \end{pmatrix} .
	\end{align*}
	Moreover, positivity of $\mu$ follows from Lemma~\ref{lemproductmatricesA}. 

Let us prove Part $(4)$. If $\beta \neq \overline{\alpha}$, the first set of inequalities follows since the product $W_\alpha^{-1} G_\alpha W_\beta$ is positive by Lemma~\ref{lemproductmatricesA} and hence $W_\alpha^{-1} G_\alpha W_\beta * \infty \subset [0,+\infty]$.
Otherwise, since $W_{\overline{\alpha}}* \infty = v_{\overline{\alpha}}^r$ is a generator of the cone $\cU_\alpha$ and hence, since $G_{\alpha } \cU_\alpha = \cW_\alpha$, it  is mapped by $G_{\alpha } $ to a generator of the cone $\cW_\alpha$, we get that $W_\alpha^{-1} G_\alpha W_{\overline{\alpha}} * \infty  \subset \{ 0, \infty\}$. 
To prove the second set of inequalities, remark first that if $\alpha = \beta $ we get trivially the identity matrix which fixes $\infty$. Thus let us assume that $\alpha \neq \beta$.  Since $R^T=R$ and $R$ is an involution in $\pslduer$, 
\begin{equation}\label{transposeinverse}
 R W_\alpha^T  (W_\beta^T)^{-1} R = \left[ \left(  R W_\alpha^{-1} W_\beta R \right)^T\right]^{-1}.
\end{equation}
Recall that by definition the matrix  $W_\beta$  sends $ \RR^\pm \times \RR^\pm $ to the wedges $\pm W_\beta \subset \cW_\beta$ and  since $\alpha \neq \beta$, $\cW_\beta \subset (\RR^2 \setminus \cW_\alpha )  $.  
Thus, 
using also that $R ( \RR^+\times \RR^+)= \RR^+\times \RR^+  $ and  $R ( \RR^\pm\times \RR^\mp)= \RR^\mp \times \RR^\pm  $, we get
\[
 R W_\alpha^{-1} W_\beta  R \ ( \RR^+\times \RR^+ ) \subset  R W_\alpha^{-1} ( \RR^2 \setminus \cW_\alpha ) = \RR^+\times \RR^- \cup \RR^-\times \RR^+.
\]
Thus, $-\infty\leq  R W_\alpha^{-1} W_\beta  R  \ast 0 \leq 0$. Hence, since one can verify that $(A^T)^{-1}\ast z= -1/ A \ast (-1/z) $, it follows  from~\eqref{transposeinverse} that  $0 \leq  R W_\alpha^T  (W_\beta^T)^{-1} R  \ast \infty \leq +\infty$. 

Finally, Part $(5)$ follows immediately from  the Definitions~\eqref{forwardCFdef} and~\eqref{backwardCFdef} of $\alpha$-forward and $\alpha$-backward continued fractions and Part $(1)$ when $k\geq 1$ and Part $(4)$ for   $[ \ab{0}]^\pm_\alpha$.
\end{proof}

We can now prove that the $\alpha$-forward and $\alpha$-backward continued fractions introduced in the previous section are well defined. This follows immediately from the following Lemma. 

\begin{lem}\label{lemconvergencecontinuedfraction}
Assume  that $(\ab{n})_{n\in\NN}$  satisfies the no-backtracking condition and $\alpha_0\neq \alpha \in \cA$.
Then the finite $\alpha$-forward and backward continued fractions $[\ab{0},\ab{1},\dots,\ab{n}]_\alpha^\pm$ converge.
\end{lem}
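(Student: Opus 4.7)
The plan is to exhibit, for each $n$, a closed interval $I_n \subseteq [0,\infty]$ such that $[\ab{0},\dots,\ab{m}]_\alpha^+ \in I_n$ for all $m \geq n$, with the $I_n$ nested and shrinking to a point; Cauchyness and hence convergence then follow immediately. Set
\[
	Q_n^+ := W_\alpha^{-1} G_\alpha G_{\ab{0}} \cdots G_{\ab{n-1}} W_{\ab{n}}.
\]
By the computation~\eqref{equalityAmatrices} in the proof of Lemma~\ref{propertiesmatrices}, one has $[\ab{0},\dots,\ab{n}]_\alpha^+ = Q_n^+ * \infty$. Since $\ab{0}\neq\overline{\alpha}$ and $(\ab{n})_{n\in\NN}$ satisfies the no-backtracking condition, the extended word $\alpha\,\ab{0}\,\ab{1}\cdots\ab{n}$ is no-backtracking, so Lemma~\ref{lemproductmatricesA} yields that $Q_n^+$ is a positive matrix. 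Define $I_n:=Q_n^+*[0,\infty]$, a closed subinterval of $[0,\infty]$.

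Nesting is automatic from the factorization $Q_{n+1}^+ = Q_n^+ \cdot (W_{\ab{n}}^{-1} G_{\ab{n}} W_{\ab{n+1}})$: the extra factor is positive (Lemma~\ref{lemproductmatricesA} applied to the two-letter word $\ab{n}\,\ab{n+1}$), hence maps $[0,\infty]$ into itself, so $I_{n+1}\subseteq I_n$. Iterating, $[\ab{0},\dots,\ab{m}]_\alpha^+ = Q_m^+ * \infty \in I_m \subseteq I_n$ for every $m\geq n$. It thus suffices to prove $\diametro(I_n)\to 0$.

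To see the shrinking, unwind $I_n$ as $W_\alpha^{-1} \cdot (G_\alpha G_{\ab{0}}\cdots G_{\ab{n-1}}) \cdot (W_{\ab{n}}*[0,\infty])$. Using the projective identification of directions with boundary points (cf.~Remark~\ref{directions}), the set $W_{\ab{n}}*[0,\infty]$ corresponds to the arc $\Arc[\ab{n}]$ on $\partial\DD$, since $W_{\ab{n}}$ maps the cone $\RR_+\times\RR_+$ onto $\cW_{\ab{n}}$; then by~\eqref{arcexpression}, the image under $G_\alpha G_{\ab{0}}\cdots G_{\ab{n-1}}$ corresponds to the arc $\Arc[\alpha,\ab{0},\dots,\ab{n}]$. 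These nested arcs shrink to a single point of $\partial\DD$ as $n\to\infty$: when $(\ab{n})_{n\in\NN}$ is not eventually cuspidal this is exactly~\eqref{shrink}, while if $(\ab{n})_{n\in\NN}$ is eventually cuspidal the common endpoint is a cusp and the moving endpoint converges to it via the attracting parabolic dynamics on $\partial\DD$ provided by Lemma~\ref{parabolic_in_cuspidal}. Finally, $W_\alpha^{-1}$ is a fixed Möbius transformation and hence a diffeomorphism of $\partial\DD$, so the diameter of its image $I_n$ also tends to zero.

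The backward case is entirely analogous with $Q_n^- := R(W_{\ab{n}}^{-1}G_{\ab{n}}\cdots G_{\ab{0}}W_\alpha)^T R$: identity~\eqref{equalityBmatrices} gives $[\ab{0},\dots,\ab{n}]_\alpha^- = Q_n^- * \infty$; positivity of $Q_n^-$ follows from Lemma~\ref{lemproductmatricesA} applied to the reversed word $\ab{n}\,\ab{n-1}\cdots\ab{0}\,\alpha$ together with the observation that $M\mapsto RM^TR$ preserves positivity (conjugation by $R$ swaps rows and columns); the nesting and shrinking arguments then carry over verbatim. The main subtlety throughout is the shrinking step in the cuspidal case, where~\eqref{shrink} does not directly apply and one must invoke the explicit parabolic description from Lemma~\ref{parabolic_in_cuspidal} to guarantee that the moving endpoint of the nested arcs converges (in the boundary circle topology) to the common cusp.
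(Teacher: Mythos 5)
Your proof is correct, and it reaches the same structural conclusion as the paper (nested intervals whose diameters shrink to zero), but the mechanism you use for the shrinking step is genuinely different. The paper works entirely in coordinates: it estimates $\diametro(I_n)=|A^{(n)}*0-A^{(n)}*\infty|$ by the angle between the column vectors of $A^{(n)}$, bounds this angle by $c/(|v^{(n)}||w^{(n)}|)$ using the determinant bound, and then shows the column norms grow --- in the non-cuspidal case because $A^{(n)}$ factorizes into arbitrarily many strictly positive factors from a finite set, and in the eventually cuspidal case because one column stabilizes while the other grows linearly (the coordinate form of the parabolic translation). You instead pass through the equivariant identification of the slope parameter $[0,\infty]$ with cones of directions and hence with arcs of $\partial\DD$, so that $I_n$ corresponds to $\Arc[\alpha,\ab{0},\dots,\ab{n}]$; shrinking is then the assertion~\eqref{shrink} in the non-cuspidal case and the attracting dynamics of the parabolic from Lemma~\ref{parabolic_in_cuspidal} in the eventually cuspidal case, followed by the observation that a fixed M\"obius map ($W_\alpha^{-1}$) preserves ``diameter tends to $0$''. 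The geometric route is cleaner conceptually and sidesteps the slightly delicate ``norms grow'' step, at the cost of relying on~\eqref{shrink}, which the paper records as a known property of boundary expansions but does not itself prove; the paper's computational argument is therefore more self-contained. Two small points worth making explicit if this were to stand alone: (i) you are implicitly using that the $\PSL(2,\RR)$-action on $\partial\DD$ is the image, under the identification of Remark~\ref{directions}, of the projective linear action on $\RR\PP^1$ --- this is true and used tacitly throughout the paper, but it is the load-bearing step of your ``corresponds to'' claim; (ii) in the eventually cuspidal case the precise reason the moving endpoint tends to the cusp is that the arcs beyond the pre-period are obtained by iterating the parabolic element $G_{\gamma_0}\cdots G_{\gamma_{k-1}}$ of Lemma~\ref{parabolic_in_cuspidal}, whose iterates converge uniformly on compacta of $\partial\DD\setminus\{\xi\}$ to the fixed point $\xi$; spelling this out makes the appeal to ``attracting parabolic dynamics'' rigorous.
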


\begin{proof}
Set
\[
	A^{(n)}:= A^\alpha_{\ab{0}}A^\alpha_{\ab{1}} \cdots A^\alpha_{\ab{n-1}}  W_{\alpha}^{-1} G_\alpha W_\ab{n}, 
	\quad \text{and} \quad
	B^{(n)}:= B^\alpha_{\ab{0}}B^\alpha_{\ab{1}} \cdots B^\alpha_{\ab{n}} R W_\alpha^T (W_{\ab{n}}^T)^{-1} R. 
\]
We will show that both  $\bigcap_{n \in \NN} \big( A^{(n)} * \RR_+ \big)$ and  $\bigcap_{n \in \NN} \big( B^{(n)} * \RR_+ \big)$ are non-empty and consist of a unique point.

By the equalities~\eqref{equalityAmatrices} and~\eqref{equalityBmatrices},  the factorization~\eqref{factorization} and Lemma~\ref{lemproductmatricesA}, we have that
\begin{align*}
A^{(n+1)} * \RR_+ & =  
\left( W_{\alpha}^{-1} G_\alpha G_{\ab{0}}\dots G_{\ab{n-1}} W_{\ab{n}}\right)\left( W_{\ab{n}}^{-1} G_{\ab{n}} W_{\ab{n+1}}\right) * \RR_+  \subset A^{(n)} * \RR_+, \\
 B^{(n+1)} * \RR_+ & = \left( R W^T_{\alpha} G^T_{\ab{0}} \dots  G^T_{\ab{n}} {(W^T_{\ab{n}})}^{-1} R \right)\left(R  W^T_{\ab{n}}       G^T_{\ab{n+1}} {(W^T_{\ab{n+1}})}^{-1} R\right) 
 \subset B^{(n)} * \RR_+.
\end{align*}
Thus, both are intersections of nested sets.  Furthermore, if $v^{(n)}$ and $w^{(n)}$ denote the two column vectors of $A^{(n)}$, we have that
\[
| A^{(n)} * 0 - A^{(n)} * \infty | \leq   \angle (v^{(n)}, w^{(n)}) \leq \frac{c}{|v^{(n)}| |w^{(n)}|}
\]
and a similar estimate holds for $|B^{(n)} * 0 - B^{(n)} * \infty |$.  Thus, to prove that the intersections consists of a unique point it is enough to show that  the norm of $A^{(n)}$ and $B^{(n)}$ are growing.  
If $(\ab{n})_{n\in\NN}$ is eventually cuspidal, one of the column vectors in  $A^{(n)}$ (resp.\ $B^{(n)}$) is eventually constant for $n$ large, but the other grows linearly in norm, thus we are done. On the other hand,  if  $(\ab{n})_{n\in\NN}$ is not eventually cuspidal, by Lemma~\ref{lemproductmatricesA},  $A^{(n)}$ and $B^{(n)}$  factorize as shown above in arbitrarily many  factors which are strictly positive, hence their norms also grow. This concludes the proof. 
\end{proof}

\subsection{The proof of Theorem~\ref{thmrenormalizedformula}.} 
Let $(\ab{n})_{n\in\NN}$ be the cutting sequence associated to $\theta$.
Let $\W{n}{\theta}$ be the sequence of matrices defined by Equation~\eqref{eq:wedgesfromboundaryexpansion}.
We write
\[
	\W{n}{\theta}= \begin{pmatrix}
		\re_\theta(\vv{n}{r} ) & \re_\theta(\vv{n}{l} ) \\
		\im_\theta(\vv{n}{r} ) & \im_\theta(\vv{n}{l} )
		\end{pmatrix}.
\]
The key remark to obtain an expression which splits past and future of the coding is the following.
Since 
$G_{\ab{0}},\dots,G_{\ab{n-1}}$ are in $\slduer$ we have
\[
	\det(W_{\ab{n}})= \det{\W{n}{\theta}}= \re_\theta (\vv{n}{r} ) \im_\theta(\vv{n}{l} )- \re_\theta (\vv{n}{l} ) \im_\theta(\vv{n}{r} )>0.
\]
Moreover $\area{\theta}{\vv{n}{r} }= \re_\theta (\vv{n}{r} )\cdot\im_\theta(\vv{n}{r} )$, and therefore
\begin{equation}\label{eq:inverseareas}
\begin{split}
\frac{1}{\area{\theta}{\vv{n}{r} }} = \frac{1}{\re_\theta(\vv{n}{r} )\cdot\im_\theta(\vv{n}{r} )} 
&=\frac{1}{\det(W_{\ab{n}})} \frac{\re_\theta(\vv{n}{r} )\im_\theta(\vv{n}{l} )- \re_\theta(\vv{n}{l} )\im_\theta(\vv{n}{r} )}{\re_\theta(\vv{n}{r} )\cdot\im_\theta(\vv{n}{r} )}\\
	&=\frac{1}{\det(W_{\ab{n}})}\bigg( \frac{\im_\theta(\vv{n}{l} )}{\im_\theta(\vv{n}{r} )} + \frac{-\re_\theta(\vv{n}{l} )}{\re_\theta(\vv{n}{r} )}\bigg).
\end{split}
\end{equation}
The following Lemma describes how the two terms in the parenthesis transform as $n$ changes. From the previous formula and the Lemma the continued fraction formula will be then easily deduced.

\begin{lem}\label{lem4renormalizedformula}
For any $n$ we have
\begin{equation}\label{eq1lem4renormalizedformula}
	\frac{\im_\theta(\vv{n}{l} )}{\im_\theta(\vv{n}{r} )}=
		B^{\ab{n}}_{\ab{n-1}}\dots B^{\ab{n}}_{\ab{0}} RW^T_{\ab{n}}\big(W^T_{\ab{0}}\big)^{-1}R*
				\frac{\im_\theta(v_0^l)}{\im_\theta(v_0^r)}.
\end{equation}
Moreover for any integer $k>0$ we have
\begin{equation}\label{eq2lem4renormalizedformula}
	\frac{-\re_\theta(\vv{n}{l} )}{\re_\theta(\vv{n}{r} )}=
		A^{\ab{n}}_{\ab{n+1}}\dots A^{\ab{n}}_{\ab{n+k-1}}W_{\ab{n}}^{-1}G_{\ab{n}}W_{\ab{n+k}}*
				\frac{-\re_\theta(v_{n+k}^l)}{\re_\theta(v_{n+k}^r)}.
\end{equation}
\end{lem}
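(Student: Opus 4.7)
The plan is to carry out both identities by the same mechanism: factoring the wedge matrix $\W{n+k}{\theta}$ (or $\W{n}{\theta}$) through $\W{n}{\theta}$ (or $\W{0}{\theta}$) and reading off how the Möbius action on a single \emph{row} of the resulting $2\times 2$ matrix changes. Writing
\[
	M_n := r_\theta \W{n}{\theta} = \begin{pmatrix} \re_\theta(\vv{n}{r}) & \re_\theta(\vv{n}{l})\\ \im_\theta(\vv{n}{r}) & \im_\theta(\vv{n}{l})\end{pmatrix},
\]
we have $M_{n+k}=M_n\cdot P_n^{n+k}$ where $P_n^{n+k} := W_{\ab{n}}^{-1}G_{\ab{n}}\cdots G_{\ab{n+k-1}}W_{\ab{n+k}}$, because $\W{m}{\theta}=G_{\ab{0}}\cdots G_{\ab{m-1}}W_{\ab{m}}$ and $r_\theta\in\slduer$. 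The two equations then come from looking at the first row of this identity (for Equation 2) and the second row of the analogous identity $M_n=M_0\cdot P_0^n$ (for Equation 1).

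For Equation 2, write $P_n^{n+k}=(p_{ij})$. The first-row identity $(\re_\theta(\vv{n+k}{r}),\re_\theta(\vv{n+k}{l}))=(\re_\theta(\vv{n}{r}),\re_\theta(\vv{n}{l}))\cdot P_n^{n+k}$ gives, after dividing numerator and denominator by $\re_\theta(\vv{n}{r})$, the relation
\[
	\frac{\re_\theta(\vv{n+k}{l})}{\re_\theta(\vv{n+k}{r})} = \frac{p_{12}+p_{22}\,\re_\theta(\vv{n}{l})/\re_\theta(\vv{n}{r})}{p_{11}+p_{21}\,\re_\theta(\vv{n}{l})/\re_\theta(\vv{n}{r})}.
\]
Inverting this Möbius relation in $\pslduer$ and introducing the pair of minus signs that appear on both sides of~\eqref{eq2lem4renormalizedformula}, the two minus signs cancel out the sign flips in the inverse matrix, so one obtains exactly $P_n^{n+k}*(-\re_\theta(\vv{n+k}{l})/\re_\theta(\vv{n+k}{r}))$. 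Finally, one checks by telescoping that
\[
	A_{\ab{n+1}}^{\ab{n}}\cdots A_{\ab{n+k-1}}^{\ab{n}}\,W_{\ab{n}}^{-1}G_{\ab{n}}W_{\ab{n+k}} = P_n^{n+k},
\]
since consecutive factors of the product of $A^\alpha_\beta = W_\alpha^{-1}G_\alpha G_\beta G_\alpha^{-1}W_\alpha$ cancel through $G_\alpha^{-1}W_\alpha\cdot W_\alpha^{-1}G_\alpha=\Id$, and the trailing factor removes the final $G_{\ab{n}}^{-1}W_{\ab{n}}$.

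For Equation 1 the same row-reduction is applied to the \emph{second} row of $M_n=M_0 P_0^n$. Using $P_0^n=(q_{ij})$ and dividing by $\im_\theta(v_0^r)$ yields
\[
	\frac{\im_\theta(\vv{n}{l})}{\im_\theta(\vv{n}{r})}=\frac{q_{12}+q_{22}\,\im_\theta(v_0^l)/\im_\theta(v_0^r)}{q_{11}+q_{21}\,\im_\theta(v_0^l)/\im_\theta(v_0^r)},
\]
which as a Möbius image is the action of the matrix $\left(\begin{smallmatrix}q_{22}&q_{12}\\ q_{21}&q_{11}\end{smallmatrix}\right)=R(P_0^n)^TR$ (a straightforward check with $R=\left(\begin{smallmatrix}0&1\\1&0\end{smallmatrix}\right)$). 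Here no sign correction is required because one is pushing from $0$ to $n$ rather than inverting. The identification with the product of $B$-matrices follows again by telescoping: from $B^\alpha_\beta=RW_\alpha^TG_\beta^T(W_\alpha^T)^{-1}R$ and $R^2=\Id$, consecutive factors cancel via $(W_\alpha^T)^{-1}R\cdot RW_\alpha^T=\Id$, giving
\[
	B^{\ab{n}}_{\ab{n-1}}\cdots B^{\ab{n}}_{\ab{0}}\cdot RW_{\ab{n}}^T(W_{\ab{0}}^T)^{-1}R=RW_{\ab{n}}^TG_{\ab{n-1}}^T\cdots G_{\ab{0}}^T(W_{\ab{0}}^T)^{-1}R=R(P_0^n)^TR.
\]

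The only subtle point, and the place where I expect the bookkeeping to be most delicate, is the passage from the row-identity to the Möbius formulation: one has to verify that the sign and the $R$-conjugation in the statements appear precisely because the two sides of each equation use either the first row or the second row of $M_n$, and that the direction of time (from $n$ to $n+k$ versus from $0$ to $n$) determines whether one ends up with $P$ directly (Equation 2) or with $R\,P^T R$ (Equation 1). Once this correspondence is set up, the remainder is routine telescoping using the definitions of the $A$- and $B$-matrices.
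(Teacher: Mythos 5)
Your proof is correct and follows essentially the same approach as the paper: the paper abstracts the single-step row computation into a standalone lemma (Lemma~\ref{lem3renormalizedformula(transfpastfuture)}) and iterates it via the one-step factors $H_n := W_{\ab{n-1}}^{-1}G_{\ab{n-1}}W_{\ab{n}}$, whereas you carry out the same row manipulation directly on the full factorization $P_n^{n+k}$, but the sign cancellation, the $R$-conjugation bookkeeping, and the telescoping of the $A$- and $B$-matrices are identical.
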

The proof of the Lemma exploits the following linear algebra exercise (Lemma~\ref{lem3renormalizedformula(transfpastfuture)}).   
Consider two matrices $W$ and $W'$ with column vectors respectively $v^r$, $v^l$ and $w^r$, $w^l$, that is
\[
	W=\begin{pmatrix}
		\re(v^r) & \re(v^l) \\
		\im(v^r) & \im(v^l) 
	\end{pmatrix}
	\text{ and }
	W'=\begin{pmatrix}
		\re(w^r) & \re(w^l) \\
		\im(w^r) & \im(w^l)
	\end{pmatrix}.
\]
Given $\pi/2\leq \theta<\pi/2$, recall that $r_\theta$ is the rotation sending the direction $\theta$ onto the vertical direction $\theta=0$. Remark that
\[
	r_\theta\cdot W=\begin{pmatrix}
				\re_\theta(v^r) & \re_\theta(v^l) \\
				\im_\theta(v^r) & \im_\theta(v^l)
			\end{pmatrix}
	\text{ and }
	r_\theta\cdot W'=\begin{pmatrix}
				\re_\theta(w^r) & \re_\theta(w^l) \\
				\im_\theta(w^r) & \im_\theta(w^l)
			\end{pmatrix}.
\]

\begin{lem}\label{lem3renormalizedformula(transfpastfuture)}
Consider a matrix $A$ with $\det(A)>0$ such that $W'=W\cdot A$.
Then we have
\[
	\frac{\im_\theta(w^l)}{\im_\theta(w^r)}= \reflection \cdot A^T \cdot \reflection *
			\frac{\im_\theta(v^l)}{\im_\theta(v^r)}, \qquad \text{and} \qquad  
	\frac{-\re_\theta(v^l)}{\re_\theta(v^r)}= A*\frac{-\re_\theta(w^l)}{\re_\theta(w^r)}.
\]
\end{lem}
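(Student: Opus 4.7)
The plan is a direct matrix manipulation. Write $A=\left(\begin{smallmatrix} a & b \\ c & d\end{smallmatrix}\right)$ so that $\det A = ad-bc>0$. Since $W'=WA$, reading the equation column by column gives $w^r = a v^r + c v^l$ and $w^l = b v^r + d v^l$. Because $\re_\theta$ and $\im_\theta$ are $\RR$-linear (they are the coordinate functionals in the basis obtained by rotating the standard basis by $\theta$), we get
\[
\begin{aligned}
\im_\theta(w^r) &= a\,\im_\theta(v^r) + c\,\im_\theta(v^l), & \im_\theta(w^l) &= b\,\im_\theta(v^r) + d\,\im_\theta(v^l),\\
\re_\theta(w^r) &= a\,\re_\theta(v^r) + c\,\re_\theta(v^l), & \re_\theta(w^l) &= b\,\re_\theta(v^r) + d\,\re_\theta(v^l).
\end{aligned}
\]
These are the only ingredients I will need.

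For the first identity, divide numerator and denominator of $\im_\theta(w^l)/\im_\theta(w^r)$ by $\im_\theta(v^r)$ and set $y := \im_\theta(v^l)/\im_\theta(v^r)$; this yields $(dy+b)/(cy+a)$. A direct computation shows
\[
R A^T R = \begin{pmatrix} 0 & 1 \\ 1 & 0 \end{pmatrix}\begin{pmatrix} a & c \\ b & d \end{pmatrix}\begin{pmatrix} 0 & 1 \\ 1 & 0 \end{pmatrix}=\begin{pmatrix} d & b \\ c & a \end{pmatrix},
\]
whose M\"obius action on $y$ is exactly $(dy+b)/(cy+a)$, proving the first claimed equality.

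For the second identity, set $y := -\re_\theta(w^l)/\re_\theta(w^r)$ and compute
\[
A * y \;=\; \frac{a y + b}{c y + d} \;=\; \frac{-a\,\re_\theta(w^l) + b\,\re_\theta(w^r)}{-c\,\re_\theta(w^l) + d\,\re_\theta(w^r)}.
\]
Substituting the expressions above for $w^r, w^l$ and expanding, the terms involving $\re_\theta(v^r)$ in the numerator cancel while the terms involving $\re_\theta(v^l)$ collect to $(bc-ad)\re_\theta(v^l) = -\det(A)\,\re_\theta(v^l)$; symmetrically the denominator simplifies to $\det(A)\,\re_\theta(v^r)$. The hypothesis $\det A>0$ guarantees we may cancel the common factor, and we are left with $-\re_\theta(v^l)/\re_\theta(v^r)$, as required.

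There is no real obstacle here: the statement is a bookkeeping identity about how the boundary values of a $2\times 2$ action transform when one switches between the ``real-slope'' and ``imaginary-slope'' coordinates, the only subtle point being the conjugation by $R$ that appears because taking a ratio of imaginary parts swaps the roles of the two rows (equivalently, it is the antidiagonal pairing of the $A^T$ action). Once the two coordinate expansions are written down, both equalities follow by a one-line M\"obius computation and the hypothesis $\det A > 0$ is used solely to justify dividing through by $\det A$ in the second calculation.
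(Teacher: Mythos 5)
Your argument is correct and is essentially the same computation the paper carries out: both start from $r_\theta W' = r_\theta W A$, read off the linear relations among the rotated coordinates, and identify the resulting ratios with M\"obius actions of $R A^T R$ and $A$. The only small imprecision is that the cancellation of $\det A$ in the second identity requires only $\det A\neq 0$, not $\det A>0$, so that hypothesis is not really ``used'' there; it is simply part of the ambient convention.
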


\begin{proof}
We obviously have $r_\theta\cdot W'=r_\theta \cdot W\cdot A$.
Observe that  we have
\[
	\begin{pmatrix}
		a & b \\
		c & d
	\end{pmatrix}^{-1}=
	\begin{pmatrix}
		d & -b \\
		-c & a 
	\end{pmatrix},
	\text{ where }
	\begin{pmatrix}
		a & b \\
		c & d
	\end{pmatrix}
	:=A,
\]
where equality hold in $\pslduer$  even if $ad-bc=\det(A)\not=1$.
The Lemma follows directly by a computation starting from  the product
\[
	\begin{pmatrix}
		\re_\theta(v^r) & \re_\theta(v^l) \\
		\im_\theta(v^r) & \im_\theta(v^l)
	\end{pmatrix}
	\cdot
	\begin{pmatrix}
		a & b \\
		c & d
	\end{pmatrix}
	=
	\begin{pmatrix}
	a\re_\theta(v^r)+c\re_\theta(v^l) & b\re_\theta(v^r)+d\re_\theta(v^l) \\
	a\im_\theta(v^r)+c\im_\theta(v^l) & b\im_\theta(v^r)+d\im_\theta(v^l)
	\end{pmatrix}.\qedhere
\]
\end{proof}

\begin{proofof}{Lemma}{lem4renormalizedformula}
For any $n\geq 1$, Equation~\eqref{eq:wedgesfromboundaryexpansion} can be rewritten recursively as $\W{n}{\theta}=\W{n-1}{\theta}H_n$ in terms of the matrix
\[
	H_n:=W_{\ab{n-1}}^{-1}G_{\ab{n-1}}W_{\ab{n}}.
\]
We first prove Equation~\eqref{eq1lem4renormalizedformula}.
Observe that that for any $n\geq2$ we have
\[
	H^T_nH^T_{n-1}= W^T_{\ab{n}}G^T_{\ab{n-1}}G^T_{\ab{n-2}}\big(W^T_{\ab{n-2}}\big)^{-1}.
\]
We have $r_\theta W_i=r_\theta W_{i-1}H_i$ for any $i$ with $1\leq i\leq n$ and moreover $R^2=\Id$, thus Lemma~\ref{lem3renormalizedformula(transfpastfuture)} implies
\[
\begin{split}
	\frac{\im_\theta(\vv{n}{l} )}{\im_\theta(\vv{n}{r} )} &=RH_n^TR*\frac{\im_\theta(v_{n-1}^l)}{\im_\theta(v_{n-1}^r)}
				=RH_n^TR\dots RH_1^TR * \frac{\im_\theta(v_0^l)}{\im_\theta(v_0^r)}\\
				&=RW^T_{\ab{n}}G^T_{\ab{n-1}}\dots G^T_{\ab{1}} G^T_{\ab{0}}\big(W^T_{\ab{0}}\big)^{-1}R*
						\frac{\im_\theta(v_0^l)}{\im_\theta(v_0^r)}\\
				&=B^{\ab{n}}_{\ab{n-1}}\dots B^{\ab{n}}_{\ab{0}} RW^T_{\ab{n}}\big(W^T_{\ab{0}}\big)^{-1}R*
						\frac{\im_\theta(v_0^l)}{\im_\theta(v_0^r)},
\end{split}
\]
where the last inequality follows from the definition~\eqref{Bmatricesdef} of $ B_{\beta}^\alpha= RW_\alpha^T G_{\beta}^T  (W_\alpha^T)^{-1} R $. 

Now we prove Equation~\eqref{eq2lem4renormalizedformula}.
For any integer $k>0$ we have	
\[ 
	r_\theta \W{i+1}{\theta}=r_\theta \W{i}{\theta}H_{i+1}, \qquad \text{for any $i$ with } n\leq i\leq n+k-1, 
\] 
thus Lemma~\ref{lem3renormalizedformula(transfpastfuture)} implies 
\begin{align*}
	\frac{-\re_\theta(\vv{n}{l} )}{\re_\theta(\vv{n}{r} )} = H_{n+1}\dots H_{n+k} * \frac{-\re_\theta(v_{n+k}^l)}{\re_\theta(v_{n+k}^r)}
	&	= W_{\ab{n}}^{-1}G_{\ab{n}}\dots G_{\ab{n+k-1}}W_{\ab{n+k}}* \frac{-\re_\theta(v_{n+k}^l)}{\re_\theta(v_{n+k}^r)}\\
		&= A^{\ab{n}}_{\ab{n+1}} \dots A^{\ab{n}}_{\ab{n+k-1}}W_{\ab{n}}^{-1}G_{\ab{n}}W_{\ab{n+k}}* \frac{-\re_\theta(v_{n+k}^l)}{\re_\theta(v_{n+k}^r)},
\end{align*}
where the last inequality follows from the definition~\eqref{Amatricesdef} of $A_\beta^\alpha:= W_\alpha^{-1}G_\alpha G_\beta G_\alpha^{-1} W_\alpha$. 
\end{proofof}

We have now all elements to conclude the proof of Theorem~\ref{thmrenormalizedformula}.
\begin{proofof}{Theorem}{thmrenormalizedformula} 
By~\eqref{eq:inverseareas} and Lemma~\ref{lem4renormalizedformula}
\begin{multline*}
\frac{1}{\area{\theta}{\vv{n}{r} }}  =\frac{1}{\det(W_{\ab{n}})}\bigg( \frac{\im_\theta(\vv{n}{l} )}{\im_\theta(\vv{n}{r} )} + 
						\frac{-\re_\theta(\vv{n}{l} )}{\re_\theta(\vv{n}{r} )}\bigg) \\
									 =\frac{1}{\det(W_{\ab{n}})} \biggl(B^{\ab{n}}_{\ab{n-1}}\dots B^{\ab{n}}_{\ab{0}}
						RW^T_{\ab{n}}\big(W^T_{\ab{0}}\big)^{-1}R* \frac{\im_\theta(v_0^l)}{\im_\theta(v_0^r)}\\
					+A^{\ab{n}}_{\ab{n+1}} \dots A^{\ab{n}}_{\ab{n+k-1}}W_{\ab{n}}^{-1}G_{\ab{n}}W_{\ab{n+k}}*
					\frac{-\re_\theta(v_{n+k}^l)}{\re_\theta(\vv{n+k}{r})}\biggl).
\end{multline*}
Thus, by Lemma~\ref{lemconvergencecontinuedfraction} and  by Definitions~\eqref{forwardCFdef} and~\eqref{backwardCFdef} of $\ab{n}$-forward and $\ab{n}$-backward continued fractions, 
\begin{align*}
\frac{1}{\area{\theta}{\vv{n}{r} }} & \sim \frac{1}{\det(W_{\ab{n}})} \bigg(B^{\ab{n}}_{\ab{n-1}}\dots B^{\ab{n}}_{\ab{0}}RW^T_{\ab{n}}
						\big(W^T_{\ab{0}}\big)^{-1}R*\infty  						+ A^{\ab{n}}_{\ab{n+1}} \dots A^{\ab{n}}_{\ab{n+k-1}}W_{\ab{n}}^{-1}G_{\ab{n}}W_{\ab{n+k}}*\infty\bigg)\\
					& =\frac{1}{\det(W_{\ab{n}})}\bigg([\ab{n-1},\dots,\ab{0}]_{\ab{n}}^- +
						[\ab{n+1},\ab{n+2},\dots ]_{\ab{n}}^+\bigg).
\end{align*}
We have thus proved the first formula of Theorem~\ref{thmrenormalizedformula}. 
The second one, that is the formula for $\area{\theta}{\vv{n}{l} }$ follows trivially from the first one observing that we have
\[
	\frac{1}{\area{\theta}{\vv{n}{l} }}= \frac{1}{\det(W_{\ab{n}})} \bigg( \frac{\im_\theta(\vv{n}{r} )}{\im_\theta(\vv{n}{l} )}+
							\frac{\re_\theta(\vv{n}{r} )}{-\re_\theta(\vv{n}{l} )}\bigg).
\]
This concludes the proof of Theorem~\ref{thmrenormalizedformula}.
\end{proofof}

\begin{proofof}{Corollary}{correnormalizedformula}
Let $\alpha \gamma_1 \cdots \gamma_{p-1}$ be the right parabolic word starting with $\alpha \in \cA$. By Lemma~\ref{lemproductmatricesA}
there exists $\mu_\alpha>0$ such that 
\[
	A^\alpha_{\gamma_1}\cdots A_{\gamma_{p-1}}^{\alpha} A^\alpha_\alpha = \begin{pmatrix} 1&\mu_\alpha \\ 0&1 \end{pmatrix}.
\]
Thus, for any $n $ such that $\ab{n+1} \cdots \ab{n+p}= \gamma_1 \cdots \gamma_{p-1}\alpha$, we have that
\begin{align*}
[\ab{n+1},\ab{n+2}, \dots 
 ]_{\alpha}^+ & =  [\gamma_{1}, \dots, \gamma_{p-1}, \alpha, \ab{n+p+1},\ab{n+p+2}, \dots]_{\alpha}^+  \\  &
 =	A^\alpha_{\gamma_1}\cdots A_{\gamma_{p-1}}^{\alpha} A^\alpha_\alpha * [\ab{n+p+1},\ab{n+p+2},\dots]_{\alpha}^+ 
 = \mu_\alpha +  [\ab{n+p+1},\ab{n+p+2},\dots]_{\alpha}^+ .
\end{align*} 
Hence, recalling that $\ab{n}=\alpha$,  the Corollary follows immediately by  Theorem~\ref{thmrenormalizedformula}. 
\end{proofof}

\section{Cantor sets}\label{sec:Cantorproof}
In this section we explain with more details the construction of the Cantor sets introduced in \S~\ref{sec:Hall} and we prove the technical Proposition~\ref{propsumcantorsets}. 

\subsection{Hall's gap condition}

Let $\KK$ be any Cantor set in $\RR$. A \emph{slow subdivision} of $\KK$ is a family of closed sets $\big(\KK(n)\big)_{n\in\NN}$  satisfying the following properties.
\begin{enumerate}
\item	Any set $\KK(n)$ is the union of disjoint closed intervals.
\item	For any $n$ there is exactly one compact interval $K$ in $\KK(n)$ and a non-empty open subinterval $B_K$ of $K$ such that
	\[
		I\cap\KK(n+1)=K\setminus B_K=K^L \sqcup K^R,
	\]
where $K^L$ and $K^R$  are two disjoint, non-empty, closed subintervals in $\KK(n+1)$.
In particular $\KK(n+1)\subset\KK(n)$ and $\KK(n+1)$ is obtained removing the interval $B_K$ from $K$. 
\item We have 
	\[
		\bigcap_{n\in\NN}\KK(n)=\KK.
	\]
\end{enumerate}
Remark that if $ B_1, \dots, B_n, \dots$ is any enumeration of the holes of $\KK$, setting $\KK(0) = [\min \KK , \max \KK] $ $\KK(n+1)$ and $\KK(n+1)=\KK(n) \setminus B_{n+1}$ we obtain a slow subdivision of $\KK$.   

We say that a slow subdivision $\big(\KK(n)\big)_{n\in\NN}$  of the Cantor set $\KK$ satisfies the \emph{gap condition} if for any $n$, the interval $B_K \subset K \in \KK(n)$ such that $\KK(n+1)\cap K = K \setminus B_K$ (see $(2)$ in the definition above) satisfies
\[
	|B_K|<|K^L| \quad \text{ and } \quad |B_K|<|K^R|.
\]
We say that the Cantor set $\KK$ satisfies the \emph{gap condition}  if it admits a slow subdivision which satisfies the {gap condition}.
We call \emph{holes} of a Cantor set $\KK$ the connected components of the complement which are contained in the interval $[\min \KK, \max \KK]$.
Remark that holes are maximal open intervals in the complement.  
 
Given two Cantor sets $\KK$ and $\FF$, we say that the pair of Cantor sets $(\KK,\FF)$ satisfies the \emph{size condition} if the length $|\KK|$ of $\KK$ is bigger than the length of any hole in $\FF$ and vice versa the length $|\FF|$ of $\FF$ is bigger than the length of any hole in $\KK$.

We have the following

\begin{thm}[Hall]\label{thm:Hall}
Let $\KK$ and $\FF$ be two Cantor sets in $\RR$, each one satisfying the gap condition.
Assume that the pair $(\KK,\FF)$ satisfies the size condition.
Then we have
\[
	\KK+\FF=\bigl[\min(\KK)+\min(\FF), \max(\KK)+\max(\FF)\bigr].
\] 
\end{thm}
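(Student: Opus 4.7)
The inclusion $\KK + \FF \subseteq [\min\KK + \min\FF,\, \max\KK + \max\FF]$ is immediate from the extremal bounds on $x+y$ for $x \in \KK$, $y \in \FF$. For the reverse inclusion, my plan is to fix any $s$ in the target interval and produce $x \in \KK$ and $y \in \FF$ with $x+y = s$ via a nested-intervals construction.

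The cornerstone will be the following elementary \emph{bridge observation}: if $K = K^L \sqcup B \sqcup K^R$ is an interval with a central removed gap $B$, and $F$ is an interval with $|F| \geq |B|$, then $(K^L + F) \cup (K^R + F) \supseteq K + F$, because $K^L + F$ and $K^R + F$ are both intervals whose possible gap inside $K + F$ has length $|B| - |F| \leq 0$, so they meet. Hence any $s \in K + F$ with $|F| \geq |B|$ must lie in $K^L + F$ or in $K^R + F$, and the symmetric statement holds with the roles of $K$ and $F$ exchanged. Using this, I will inductively construct nested intervals $K_0 \supseteq K_1 \supseteq \dots$ and $F_0 \supseteq F_1 \supseteq \dots$, where $K_n$ (respectively $F_n$) is an interval at some level of a gap-satisfying slow subdivision of $\KK$ (respectively $\FF$), maintaining the invariant $s \in K_n + F_n$. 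Starting from $K_0 = [\min\KK, \max\KK]$ and $F_0 = [\min\FF, \max\FF]$, the invariant holds by choice of $s$. At stage $n$, write $b(K_n)$ and $b(F_n)$ for the sizes of the next holes to be removed from $K_n$ and $F_n$ in their respective slow subdivisions; the gap condition yields $b(K_n) < |K_n|/2$ and $b(F_n) < |F_n|/2$. If $|F_n| \geq b(K_n)$ I refine $K_n$ via the bridge observation; otherwise $|F_n| < b(K_n) < |K_n|/2$, whence $b(F_n) < |F_n|/2 < |K_n|$, and I refine $F_n$ instead. The size condition guarantees the very first refinement is feasible.

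The main obstacle, and where the gap and size conditions combine nontrivially, is to force both sequences to refine indefinitely, so that $\bigcap_n K_n$ and $\bigcap_n F_n$ degenerate to single points $x \in \KK$ and $y \in \FF$. If $K_n$ is refined infinitely often, the intervals $K_n$ lie at arbitrarily deep levels of the slow subdivision, hence $\bigcap_n K_n \subseteq \bigcap_m \KK(m) = \KK$; being also a nested intersection of intervals inside a set with empty interior, it must reduce to a single point. The delicate issue is that the naive strategy could stall, endlessly refining $F_n$ while $K_n$ remains frozen at some interval $K_\infty$ of positive length, and then the limit $s - y$ would only be guaranteed to lie in $K_\infty$, which could contain holes of $\KK$ and yield no $x \in \KK$. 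Ruling this out is the main technical difficulty and requires a careful alternating or balanced strategy exploiting the gap condition to keep $|K_n|$ and $|F_n|$ of comparable order throughout the induction. Once both sequences collapse to points, continuity of addition gives $s = x + y$, concluding the proof.
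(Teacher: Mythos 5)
Your outline matches the paper's scaffolding (nested intervals with the invariant $s\in K_n+F_n$, the "bridge" Lemma, limits collapsing to points), but the proposal contains a genuine gap that you yourself flag and then do not close: you never exhibit a refinement strategy that guarantees \emph{both} $K_n$ and $F_n$ are subdivided infinitely often. The strategy you propose — refine $K_n$ when $|F_n|\ge b(K_n)$, otherwise refine $F_n$ — can stall on one side. Indeed, if each refinement of $K_n$ only shrinks the next hole size $b(K_{n+1})$ (as happens for a monotone subdivision), then once $|F_n|\ge b(K_n)$ holds it holds at every later stage, so $F_n$ is never touched; the limit of $K_n$ is a point $x\in\KK$, but $s-x$ is only known to lie in the frozen interval $F_0$, which is not a subset of $\FF$. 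Your last paragraph acknowledges exactly this but leaves the fix as "a careful alternating or balanced strategy", which is precisely the content that needs to be supplied.

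The paper's resolution uses two ingredients you omit. First, it fixes \emph{monotone} slow subdivisions (holes removed in decreasing size order) and proves a preparatory lemma that a monotone slow subdivision inherits the gap condition whenever some slow subdivision has it; this is not automatic and the paper gives a separate argument for it. Second, the refinement rule is: at each step subdivide the interval whose pending hole is \emph{larger}, i.e.\ assuming $|B_i|\ge|C_i|$ subdivide $K_i$ and keep $F_{i+1}=F_i$. The invariant $|B_i|<|F_i|$ and $|C_i|<|K_i|$ (initially given by the size condition) is then preserved: monotonicity gives $|B_{i+1}|\le|B_i|<|F_{i+1}|$, and the gap condition gives $|B_i|<|K_i^{L}|=|K_{i+1}|$, hence $|C_{i+1}|=|C_i|\le|B_i|<|K_{i+1}|$. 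Finally, since only finitely many holes of a Cantor set exceed any given $\epsilon>0$, the rule "always attack the larger hole" forces the roles to alternate infinitely often, so both level indices $n_i,m_i\to\infty$ and both intersections collapse to points. This is the combinatorial step you need to add; as written, the proposal states the plan and the difficulty but does not prove the theorem.
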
 
This result is a slight reformulation\footnote{The theorem proved by Hall in~\cite[pp.~968--970]{Hall} is stated for ternary Cantor sets. Hall assumes a condition that is equivalent to the size condition for ternary Cantor sets.} of the Theorem proved in~\cite[pp.~968--970]{Hall} which is a key part in the original proof of the existence of the Hall ray for the classical spectrum. 
For the convenience of the reader, we include a proof of this  Theorem in the Appendix~\ref{sec:thmHall}.

In the  next sections we will show that for $N$ sufficiently large the Cantor sets we consider satisfy the gap and size conditions and hence we can apply Theorem~\ref{thm:Hall} to prove Proposition~\ref{propsumcantorsets}.

\subsection{Description of the Cantor sets holes}\label{subsec:Cantordescription}
Throughout the section, we fix a letter $\alpha\in\cA$ and a positive integer $N$. To simplify the notation, we denote by  $\cK:= \cK^\alpha_N$ and by  $\KK^\pm$  the  Cantor sets $\KK^\pm_{N,\alpha}$.  Similarly, we will denote by $A_\beta$ (resp.\ $B_\beta$) the matrices $A^\alpha_\beta$ (resp.\ $B^\alpha_\beta$) introduced in \S~\ref{sec:Hall}, see (\ref{Amatricesdef},~\ref{Bmatricesdef}), dropping the explicit dependence on $\alpha$. 
For any word $\ab{0} \dots \ab{n}$ which satisfies the no-backtracking condition~\eqref{eqnobacktrack}, let us define 
\[
 I_\alpha^\pm (\ab{0} , \ab{1}, \dots ,\ab{n})  = \{ [\bb{0}, \bb{1}, \dots, \bb{n} , \dots ]^\pm_\alpha \text{ s.t.  $(\bb{n})_{n \in \NN}$ satisfies~\eqref{eqnobacktrack} and $\bb{i} =\ab{i}$ for $0 \leq i \leq n$} \}. 
\]
One can see that $I_\alpha^\pm (\ab{1}, \dots, \ab{n})$ are closed intervals in $\mathbb{R}$ and that for fixed $n$ they have disjoint interiors and cover $\mathbb{R}$.
Furthermore, it follows from Part $(5)$ of Lemma~\ref{propertiesmatrices} that
\begin{equation}\label{positivedecomp}
[0,+\infty] = \bigcup_{\beta\neq \overline{\alpha}} I^+_\alpha ( \beta) , \qquad [0,+\infty] =  \bigcup_{\beta\neq \overline{\alpha}} I^-_\alpha (\beta).
\end{equation}
It hence follows from their definition that the sets $\KK^\pm$ are obtained by removing from $[0,+\infty]$ all intervals of the form
\[
	 I_\alpha^\pm (\ab{1}, \dots, \ab{n-N}, \beta_1, \dots, \beta_N), \text{ with $\beta_1 \dots \beta_N$ cuspidal word.}
\]
We can also assume that $\ab{1} \dots \ab{n-N}$ does not contain any cuspidal word of length $N$.
We will call intervals of this form \emph{deleted intervals of level n}.
Notice that two such intervals cannot intersect in their interior, but deleted intervals of different levels can have common endpoints.
More precisely, any deleted interval of level $n$ has a common endpoint with a deleted interval of level $n+N$.
Thus, to describe the Cantor set structure of $\mathbb{K}^\pm$, we are now going to \emph{group} deleted intervals to describe \emph{holes} of the Cantor set.

We will give definitions for $\mathbb{K}^+$ and $\mathbb{K}^-$ in parallel. By Lemma~\ref{propertiesmatrices}, let 
\[
x_0^+:=0 < x_1^+ < \dots < x^+_{2d-2}< x^+_{2d-1}:= +\infty
\]
be the endpoints of the intervals $ I^+_{\alpha}(\beta)$, $\beta \neq \overline{\alpha}$, arranged in increasing order. Similarly let 
 \[ x_0^-:=0 < x_1^- < \dots < x^-_{2d-2}< x^-_{2d-1}:= +\infty
\]
be the endpoints of the intervals $I^-_{\alpha}(\beta)$, $\beta \neq \overline{\alpha}$ also in increasing order.
By convention, we also set $x_{2d-1}^\pm=+ \infty = -\infty$. 

\begin{notation}\label{rholambda} For each $0\leq i \leq  2d-1$, let $\lambda^-_i$ and $\rho^-_i$ denote the letters such that $x_i^-$ is a right endpoint of  $I^-_{\alpha}(\lambda^-_i)$ and a left endpoint of  $I^-_{\alpha}(\rho^-_i)$.  Similarly let $\lambda^+_i$ and $\rho^+_i$ denote the letters such that $x_i^+$ is a right endpoint of  $I^+_{\alpha}(\lambda^+_i)$ and a left endpoint of  $I^+_{\alpha}( \rho^+_i)$, see Figure~\ref{fig:cantor}. 
\end{notation}

\begin{figure}
\centering
\def\svgwidth{0.9\textwidth}
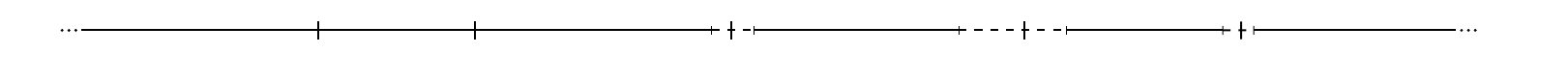
\caption{A schematic representation of the Cantor set $\KK^+$. The dashed segments are some holes of level zero.}
\label{fig:cantor}
\end{figure}


For  any $x_i^\pm$ with $0\leq i \leq  2d-1$, let us group the deleted intervals around $x_i^\pm$ as follows. Let  $ I_\alpha^\pm ( \beta_1, \dots , \beta_N)$  be the unique deleted interval of length $N$ which has $x_i^\pm$ as left endpoint, namely assume $ \beta_1 \dots  \beta_N$ is the unique left cuspidal word that start with $\beta_1:= \lambda_i^\pm$ (see Remark~\ref{unique}). Consider the right endpoint of $ I_\alpha^\pm ( \beta_1, \dots, \beta_N)$. There exists a unique deleted interval of length $2N$ which has this point as its left endpoint. Continuing in this way, define at step $k$ a deleted interval of length $kN$ whose left endpoint coincide with the right endpoint of the interval at step $k-1$. Since the size of these intervals shrink exponentially in $k$, there exists a point $r_i^\pm$ defined as limit point of the right endpoints of these intervals as $k$ grows. Repeating the construction on the other side, we can defined similarly the points $l_i^\pm$. Remark that for $0\leq  i \leq 2d-2$ we have that $l_i^\pm<x_i^\pm<r_i^\pm$ and by construction 
  the intervals $(l_i^\pm, r_i^\pm)$ for $0\leq i \leq 2d-2$ and $(-\infty, r_{2d-1})$ and $(l_{2d-1}, +\infty)$ are union of deleted intervals. 

\begin{defi}
The \emph{holes of first generation} for $\KK^\pm$ are the intervals $(l_i^\pm, r_i^\pm)$  for $ 0<i < 2d-1$. 
The \emph{ holes}
\emph{of generation} $k$ \emph{for} $\KK^+$, whose union will be denoted $H^+_k$,  consist of all intervals of the form
\[
\left( A_{\ab{1}} \dots A_{\ab{k}}* l_i^+, A_{\ab{1}} \dots A_{\ab{k}} * r_i^+  \right), \qquad \lambda_i^+ \neq \overline{\ab{k}}, \quad  \rho_i^+ \neq \overline{\ab{k}},
\]
where ${\ab{1}} \dots {\ab{k}}$ are the first letters of a sequence $(\ab{n})_{n \in \NN} \in \cK$.
The \emph{holes}
 \emph{of generation $k$ for $\KK^-$}, whose union will be denoted $H^-_k$, consist of all intervals of the form
\[
\left( B_{\ab{1}} \dots B_{\ab{k}} * l_i^-  ,  B_{\ab{1}} \dots B_{\ab{k}} * r_i^-  \right), \qquad \lambda_i^- \neq \overline{\ab{k}}, \quad  \rho_i^- \neq \overline{\ab{k}},
\]
where ${\ab{1}} \dots {\ab{k}}$ are the first letters of a sequence $(\ab{n})_{n \in \NN} \in \cK$.
\end{defi}

\begin{lem}\label{lemdescriptionholes}
The mimima and maxima $m^\pm$ and $M^\pm$ of $ \mathbb{K}^\pm$ are given by $m^\pm = r_0^\pm $ and  $M^\pm = l_{2d-1}^\pm $. The Cantor sets $\mathbb{K}^\pm$ are obtained removing from $[m^\pm, M^\pm]$ the union over $k$ of all holes of generation $k$, that is
\[
	\mathbb{K}^\pm = [m^\pm , M^\pm] \setminus \bigcup_k H^\pm_k = [r_0^\pm , l_{2d-1}^\pm] \setminus \bigcup_k H^\pm_k.
\]
\end{lem}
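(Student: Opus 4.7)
I will treat $\KK^+$; the case of $\KK^-$ is identical after replacing $A_\beta$-matrices by $B_\beta$-matrices. The plan is to exploit the shift relation
\[
	[\ab{0},\ab{1},\dots]^+_\alpha = A^\alpha_{\ab{0}} * [\ab{1},\ab{2},\dots]^+_\alpha,
\]
which follows immediately from~\eqref{forwardCFdef}, together with the convergence in Lemma~\ref{lemconvergencecontinuedfraction}, to set up a dictionary between points of $\KK^+$ and admissible sequences in $\cK$.

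The first step is to identify the complement of $\KK^+$ inside $[0,+\infty]$ with the union of all \emph{deleted intervals}. By Lemma~\ref{propertiesmatrices}(5) and the formula $I^+_\alpha(\ab{0},\dots,\ab{n}) = A^\alpha_{\ab{0}}\cdots A^\alpha_{\ab{n-1}} W_\alpha^{-1} G_\alpha W_{\ab{n}}*[0,+\infty]$, these intervals form at each level $n$ a refinement of the partition at level $n-1$, and by Lemma~\ref{lemconvergencecontinuedfraction} they shrink to single points. Hence a value lies in $\KK^+$ iff it belongs to a nested chain $I^+_\alpha(\ab{0})\supset I^+_\alpha(\ab{0},\ab{1})\supset\cdots$ whose labels avoid any cuspidal subword of length $N$. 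Consecutive deleted intervals can share endpoints, and the construction of $l_i^+$ and $r_i^+$ in \S\ref{subsec:Cantordescription} groups together precisely those deleted intervals touching $x_i^+$: on the $\lambda_i^+$-side of $x_i^+$ the chain is governed by the unique infinite left-cuspidal sequence starting with $\lambda_i^+$, which by Lemma~\ref{parabolic_in_cuspidal} is periodic, and the length-$kN$ deleted intervals in this chain accumulate at $l_i^+$. A symmetric argument on the $\rho_i^+$-side yields $r_i^+$, and $(l_i^+,r_i^+)$ is exactly the union of all deleted intervals abutting $x_i^+$.

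For the minimum and maximum, observe that the chain at $x_0^+=0$ has no left-hand side, so its deleted intervals cover all of $[0,r_0^+)$, forcing $m^+\geq r_0^+$. The value $r_0^+$ is itself the limit of right endpoints of the chain and corresponds to the (non-eventually-cuspidal) limit sequence of the chain, whose entries by construction contain no cuspidal subword of length $N$, so $r_0^+\in\KK^+$ and $m^+=r_0^+$; a symmetric argument gives $M^+=l_{2d-1}^+$. For the description of $\KK^+$, one direction follows immediately from the shift relation: a point in the generation-$k$ hole $A_{\ab{1}}\cdots A_{\ab{k}}*(l_i^+,r_i^+)$ has expansion $\ab{1},\dots,\ab{k},\bb{1},\bb{2},\dots$ whose tail $[\bb{1},\bb{2},\dots]^+_\alpha$ lies in $(l_i^+,r_i^+)$, which forces $\bb{1}\dots\bb{N}$ to be cuspidal; the full sequence then contains a cuspidal subword of length $N$ and is excluded from $\cK$.

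For the converse, given $x\in[m^+,M^+]\setminus\bigcup_k H^+_k$ I would build $(\bb{n})\in\cK$ with $[\bb{0},\bb{1},\dots]^+_\alpha = x$ inductively: at each stage $x$ lies in a unique $I^+_\alpha(\bb{0},\dots,\bb{n})$ and one defines $\bb{n+1}$ as the unique letter with $x\in I^+_\alpha(\bb{0},\dots,\bb{n+1})$; convergence gives $[\bb{0},\bb{1},\dots]^+_\alpha = x$. If at some step $\bb{k+1}\dots\bb{k+N}$ were cuspidal then $x$ would lie in the deleted interval $I^+_\alpha(\bb{0},\dots,\bb{k+N})$, hence by the chain description in \S\ref{subsec:Cantordescription} in the generation-$k$ hole $A_{\bb{1}}\cdots A_{\bb{k}}*(l_i^+,r_i^+)$ for the appropriate index $i$ determined by $\bb{k+1}$, contradicting $x\notin\bigcup_k H^+_k$. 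The main obstacle is the careful bookkeeping of orientation under the M\"obius actions $W_\alpha^{-1}G_\alpha W_\beta$ and $RW_\alpha^T(W_\beta^T)^{-1}R$: one must check that $\lambda_i^+$ on the real line really corresponds to $l_i^+$ (rather than $r_i^+$), and likewise for the backward case with $\lambda_i^-$ and $l_i^-$, which depends on tracking signs of the determinants through the whole construction. Once the orientations are tracked consistently the argument is a clean combinatorial identification.
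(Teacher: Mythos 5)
Your proposal is in the same spirit as the paper's proof — both characterize $\KK^\pm$ via nested chains of deleted intervals, use the shift relation $[\ab{0},\ab{1},\dots]^+_\alpha = A^\alpha_{\ab{0}} * [\ab{1},\dots]^+_\alpha$, construct the accumulation points $l_i^\pm,r_i^\pm$, and identify the holes as grouped chains of deleted intervals pushed forward by $A_{\ab{1}}\cdots A_{\ab{k}}$. However, the proof is incomplete in two respects that the paper's argument addresses head-on.

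First, your inductive converse is not well defined as stated. You write ``at each stage $x$ lies in a unique $I^+_\alpha(\bb{0},\dots,\bb{n})$,'' but that fails precisely when $x$ is an endpoint of some level-$n$ interval: such $x$ has \emph{two} nested chains of intervals containing it. This is where the paper's preliminary step — proving $\KK^\pm$ are closed by explicitly treating the eventually-cuspidal case — does real work: it shows that any point with a cuspidal (hence non-unique, eventually-cuspidal) boundary expansion is already interior to some deleted interval, hence contained in a hole. Only after that is your assertion ``$x\notin\bigcup_k H_k^+$ forces a unique admissible chain'' justified. Without it, your induction could get stuck: if $x$ is an endpoint, one of the two chains may contain a cuspidal subword of length $N$, and you need to argue that the other chain doesn't (or, more simply, that this case cannot happen for $x\notin\bigcup_k H_k^+$).

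Second, your statement that a point in a generation-$k$ hole has tail with $\bb{1}\dots\bb{N}$ cuspidal is too strong. A point $z\in(l_i^+,r_i^+)$ may lie in a deleted interval of level $jN$ deep in the chain, whose cuspidal window sits at positions $(j-1)N+1,\dots,jN$ rather than $1,\dots,N$. The correct conclusion is that \emph{some} subword $\bb{j+1}\dots\bb{j+N}$ of the tail is cuspidal, which still suffices to exclude the full expansion from $\cK$ — but as written the claim is false. Finally, the orientation question you flag at the end (which of $\lambda_i^\pm$ and $\rho_i^\pm$ governs the chain converging to $l_i^\pm$ versus $r_i^\pm$) is a real bookkeeping issue, but you are right that it does not affect the structure of the argument; the paper handles it through the explicit formula~\eqref{transformedI} together with the ordering of the $x_i^\pm$.
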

\begin{proof} 
Let us first show that $\mathbb{K}^\pm$ are closed. If $y^\pm$ belongs to the complement of $\mathbb{K}^\pm$, it is described by a word $(\ab{n})_{n \in \NN} \in \cK$ which contains a parabolic word of length $N $ and thus 
$y^\pm$ belongs to some deleted interval for $\mathbb{K}^\pm$, call it $I$. 
Remark that the endpoints of $I$ are described by eventually cuspidal words. Thus, either $(\ab{n})_{n \in \NN}$  is not eventually cuspidal and  hence $y^\pm$ belongs to the interior of $I$, or $(\ab{n})_{n \in \NN}$  is eventually cuspidal and it has another  eventually cuspidal expansions $(\bb{n})_{n \in \NN}$. 
  In this case, for $n$ large enough,  $y^\pm$ is the common endpoint of the two adjacent deleted intervals  $I(\ab{1}, \dots, \ab{n})$ and $I(\bb{1}, \dots, \bb{n})$. 
  In both cases, $y^\pm$ is contained in an open interval in the complement of   $\mathbb{K}^\pm$ and thus $\mathbb{K}^\pm$ is closed.  

Let us denote by $\partial H^\pm_k$ the endpoints of holes of generations $k$ for  $\KK^\pm$. We will show first that
\begin{equation}\label{inclusions}
	\overline{\bigcup_k  \partial H^\pm_k} \subseteq  \mathbb{K}^\pm \subseteq [m^\pm, M^\pm] \setminus \bigcup_k H^\pm_k  
\end{equation}
and then that the inclusions are indeed equalities. 
It follows from Lemma~\ref{propertiesmatrices} and the definitions of  $\mathbb{K}^\pm$ and $\alpha$-continued fractions that  $\mathbb{K}^\pm \subset [0,+\infty]$.   Let us now show that $r_i^\pm$ for $0\leq i < 2d-1$ and $ l_i^{\pm}$ for $0< i \leq 2d-1$ belong to $\KK^\pm$. This  implies in particular that $r_0^\pm$ and $l_{2d-1}^\pm$ are respectively the minimum and maximum of  $\KK^\pm$, since by construction $[0, r_0^{\pm})$ and $(l_{2d-1}^\pm, +\infty]$ are union of deleted intervals and hence do not intersect $\KK^\pm$.  
If, by contradiction, $r_i^+$ does not belong to $\KK^+$, it is contained in the interior of a deleted interval $I_a$ or it is the common endpoint of two deleted intervals $I_b$ and $I_c$ respectively of levels $n$ and $n+N$, for some $n$.
In both cases, since $r_i^{+}$ is the limit point of the right endpoints of deleted intervals of shrinking size, one can find a deleted interval which is strictly contained in either $I_a$, $I_b$ or $I_c$.
Since deleted intervals cannot intersect in their interior, we have a contradiction. 
Similarly, one can show that all the endpoints of holes of generations $k$ for  $\KK^\pm$  belong to  $\KK^\pm$. Indeed, remark first that if $I^\pm:=I^\pm(\bb{1}, \dots, \bb{n})$, 
\begin{equation}\label{transformedI}
 A_{\ab{1}} \dots A_{\ab{k}} * I^+ = I^+(\ab{1}, \dots ,{\ab{k}} ,  \bb{1}, \dots, \bb{n}), \qquad B_{\ab{1}} \dots B_{\ab{k}} * I^- = I^-(\ab{1}, \dots ,{\ab{k}} ,  \bb{1}, \dots, \bb{n}).
\end{equation} 
Thus, consider  for example the hole endpoint $ A_{\ab{1}} \dots A_{\ab{k}} * l_i^+$ (the others are treated similarly).
If it did not belong to $\KK^+$, it would belong to the interior of a deleted interval  of the form $I^+(\ab{1} , \dots, \ab{k}, \bb{1}, \dots, \bb{n})$ or it would be the common endpoint of two intervals of a similar form of two different levels $n$ and $n+N$ for some $n$.
In the first case, by~\eqref{transformedI}, $l_i^+$ would belong to the interior of $ I^+(  \bb{1}, \dots, \bb{n})$, which is also a deleted interval and this, as we proved above, gives a contradiction.
In the second case, Equation~\eqref{transformedI} would similarly imply that we can find a deleted interval which has $l_i^+$ as an endpoint and contains a deleted interval of smaller size.
This proves that $\cup_k \partial H_k^\pm\subset\KK^+$.
Since $\KK^+$ is closed, we have shown the first inclusion in~\eqref{inclusions}. 

To prove the second inclusion in~\eqref{inclusions}, remark that the holes in $H^+_k$  (resp.\ $H_k^-$) are by construction union of intervals of the form $A_{\ab{1}} \dots A_{\ab{k}} * I^+$  (resp.\  $ B_{\ab{1}} \dots B_{\ab{k}} * I^-$) where $I^\pm$ are deleted intervals, which, by~\eqref{transformedI} are again deleted intervals.
Thus, $H^+_k$ is included in the complement of $\KK^\pm$ and hence $\mathbb{K}^\pm \subset [m^\pm, M^\pm] \setminus \cup_k H^\pm_k $.

Remark also that  $[m^\pm, M^\pm] \setminus \cup_k H^\pm_k$ is obtained removing open intervals and hence is closed.
We proved at the beginning that $\mathbb{K}^\pm$ is also closed. Thus, to conclude it is enough to show that $\cup_k  \partial H^\pm_k$ is  dense in $ [m^\pm, M^\pm] \setminus \cup_k H^\pm_k$, since taking its  closure this forces the  inclusions in~\eqref{inclusions} to be equalities.
A point in $ [m^+, M^+] \setminus \cup_{k=0}^n H^+_k$ (the treatment of $H^-_k$ is analogous) belongs to an intersection in $k$ of  complementary intervals to $H^+_k$, which are intervals of the form 
\begin{equation}\label{complI}
(  A_{\ab{1}}\dots A_{\ab{k}}
\ast
r_{i}^+,\  A_{\ab{1}}\dots A_{\ab{k}}
\ast l_{i+1}^+\big) , 
 \qquad  \rho_i^\pm \neq \overline{\ab{k}}, \quad  \lambda_{i+1}^+ \neq \overline{\ab{k}}.
\end{equation}
Since, by Notation~\ref{rholambda},  $r_i^+ \in I^+(\rho_i^+)$ and $l_{i+1}^+ \in I^+(\lambda_{i+1}^+)$, the assumption on $\rho_i^+ $ and $ \lambda_{i+1}^+$,~\eqref{transformedI} and~\eqref{positivedecomp} imply that $A_{\ab{k}} \ast r_i^+$ and $A_{\ab{k}} \ast l_{i+1}^+$ both belong to $[0, +\infty]$. Thus, the interval~\eqref{complI} is contained in  $ A_{\ab{1}}\dots A_{\ab{k-1}} \ast [0,+\infty]$, whose size shrinks to zero as $k$ grows by Lemma~\ref{lemconvergencecontinuedfraction}.  Density of endpoints follows. 
\end{proof}

\subsection{Verification of the gap condition and end of the proof}
In this section, we give the proof of Proposition~\ref{propsumcantorsets}. 
In order to verify the gap condition for the Cantor sets $\mathbb{K}^\pm$, we will use the 
 following general estimate on the distortion of distances under a M\"obius transformation.

\begin{lem}\label{lemdistortionhomography}
Consider a M\"obius transformation $g(t)=(at+b)/(ct+d)$ with $ad-bc=1$ and fix two real numbers $x<y$ such that the pole $g^{-1}(\infty)=-d/c$ of $g$ does not belong to the closed interval $[x,y]$.
For any $t$ with $x<t<y$ the estimates below hold.
\begin{enumerate}
\item If $g^{-1}(\infty)<x$ then we have
\[
	\frac{\abs{x-t}}{\abs{y-t}}<\frac{\abs{g(x)-g(t)}}{\abs{g(y)-g(t)}}<\frac{\abs{y-g^{-1}(\infty)}^2}{\abs{x-g^{-1}(\infty)}^2}\frac{\abs{x-t}}{\abs{y-t}}.
\]
\item If $y<g^{-1}(\infty)$ then we have
\[
	\frac{\abs{y-g^{-1}(\infty)}^2}{\abs{x-g^{-1}(\infty)}^2}\frac{\abs{x-t}}{\abs{y-t}}<\frac{\abs{g(x)-g(t)}}{\abs{g(y)-g(t)}}<\frac{\abs{x-t}}{\abs{y-t}}.
\]
\end{enumerate}
\end{lem}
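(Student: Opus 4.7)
The plan is to reduce everything to a single explicit identity and then do a case analysis. The key starting point is that, since $ad-bc=1$, one has for any two real numbers $s,t$ (away from the pole $p:=g^{-1}(\infty)=-d/c$) the elementary identity
\[
	g(s)-g(t)=\frac{s-t}{(cs+d)(ct+d)}.
\]
Applying this to the pairs $(x,t)$ and $(y,t)$ and taking absolute values I would get
\[
	\frac{\abs{g(x)-g(t)}}{\abs{g(y)-g(t)}}=\frac{\abs{x-t}}{\abs{y-t}}\cdot\frac{\abs{cy+d}}{\abs{cx+d}}=\frac{\abs{x-t}}{\abs{y-t}}\cdot\frac{\abs{y-p}}{\abs{x-p}},
\]
where in the last step I use $cs+d=c(s-p)$ so that the factor $|c|$ cancels. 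This exact formula is the heart of the proof; all that remains is to compare the factor $|y-p|/|x-p|$ with $1$ and with its square.

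For part $(1)$, assume $p<x$. Then $p<x<t<y$, so $0<x-p<y-p$ and therefore $|y-p|/|x-p|>1$. Plugging this into the identity above immediately gives the lower bound $\abs{x-t}/\abs{y-t}$. For the upper bound I simply note that a number greater than $1$ is bounded above by its square, so $|y-p|/|x-p|<|y-p|^2/|x-p|^2$, which yields the claimed estimate.

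Part $(2)$ is symmetric: if $y<p$, then $0<p-y<p-x$, hence $|y-p|/|x-p|<1$. Applying the identity gives the upper bound $\abs{x-t}/\abs{y-t}$, and since a positive number less than $1$ is bounded below by its square, we get $|y-p|^2/|x-p|^2<|y-p|/|x-p|$, which provides the lower bound. Finally, all quantities $cx+d$, $cy+d$, $ct+d$ are non-zero because $p\notin [x,y]$, so no division by zero occurs and the identity is valid throughout.

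There is no real obstacle here; the only care needed is to track the direction of the inequality $|y-p|/|x-p|\gtrless 1$ in each case, and to observe that for a positive number $\lambda$ one has $\lambda<\lambda^2$ iff $\lambda>1$ and $\lambda^2<\lambda$ iff $\lambda<1$, which is precisely what produces the asymmetric form of the stated bounds.
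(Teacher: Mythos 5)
Your proof is correct, and it takes a genuinely different route from the paper's. The paper's argument is analytic: it bounds $|g(t)-g(x)|$ and $|g(t)-g(y)|$ from above and below using the monotonicity of $|g'(s)|=|cs+d|^{-2}$ on $[x,y]$ (which holds because the pole lies outside the interval), and then divides these bounds, producing the factor $|g'(x)|/|g'(y)|=|y-p|^2/|x-p|^2$. Your argument is algebraic: using $ad-bc=1$, you get the exact identity $g(s)-g(t)=\dfrac{s-t}{(cs+d)(ct+d)}$, hence
\[
\frac{\abs{g(x)-g(t)}}{\abs{g(y)-g(t)}}=\frac{\abs{x-t}}{\abs{y-t}}\cdot\frac{\abs{y-p}}{\abs{x-p}},
\]
where $p=g^{-1}(\infty)$. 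This exact formula shows that the true distortion factor is $\lambda:=\abs{y-p}/\abs{x-p}$, and the lemma's inequalities follow immediately from $1<\lambda<\lambda^2$ when $p<x$ (so $\lambda>1$) and $\lambda^2<\lambda<1$ when $p>y$ (so $0<\lambda<1$); strictness holds since $p\notin[x,y]$ forces $\lambda\neq1$. A side benefit of your approach is that it reveals the paper's bounds are not tight: the exact value involves $\lambda$ to the first power, so the squared factor appearing in the statement is merely a convenient (if lossy) bound coming from the paper's two-sided MVT estimate. Both proofs are short and correct; yours avoids calculus entirely and makes the source of the distortion factor explicit.
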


\begin{proof}
We just prove the first case, the second being the same.
Recall that $\abs{g'(t)}=\abs{ct+d}^{-2}$ and that this is a decreasing function on $[x,y]$, since the pole of $g$ satisfies $g^{-1}(\infty)<x$.
We have
\begin{align*}
&	 \abs{g'(t)}\cdot\abs{t-x}<\abs{g(t)-g(x)}<\abs{g'(x)}\cdot\abs{t-x},\\
&	\abs{g'(t)}\cdot\abs{t-y}>\abs{g(t)-g(y)}>\abs{g'(y)}\cdot\abs{t-y}.
\end{align*}
Thus the Lemma follows recalling that for any pair of points $r$ and $s$ we have
\[
	\frac{\abs{g'(r)}}{\abs{g'(s)}}=\frac{(cs+d)^2}{(cr+d)^2}=\frac{\big(s-(-d/c)\big)^2}{\big(r-(-d/c)\big)^2}=
						\frac{\big(s-g^{-1}(\infty)\big)^2}{\big(r-g^{-1}(\infty)\big)^2}.\qedhere
\]
\end{proof}

The key ingredient to verify the gap condition for $\mathbb{K}^+ $ is the following lemma, in which the length of  a hole in $H_k^+$ is compared with the lengths of the adjacent intervals in the complement of $H_k^+$.

\begin{lem}\label{lemgapconditionfuture}
If $N $ is sufficiently large, 
for any sequence  $(\ab{n})_{n \in \NN}$ in $\cK= \cK_{N}^{\alpha}$,    
 any $k$ and any $0\leq i \leq 2d-1$  such that both $ \lambda^+_i \not=\overline{\ab{k}}$ and $\rho^+_i \not=\overline{\ab{k}}$, the following two conditions are satisfied:
\begin{align*}
	|\big(  A_{\ab{1}}\dots A_{\ab{k}} *l_{i}^+,  A_{\ab{1}}\dots A_{\ab{k}} * r_{i}^+\big) | &< %
	|\big(  A_{\ab{1}}\dots A_{\ab{k}}*r_{i-1}^+,  A_{\ab{1}}\dots A_{\ab{k}}* l_{i}^+\big) |,\\
	|\big(  A_{\ab{1}}\dots A_{\ab{k}}*l_{i}^+,  A_{\ab{1}}\dots A_{\ab{k}}* r_{i}^+\big) |&< %
	|\big(  A_{\ab{1}}\dots A_{\ab{k}}*r_{i}^+, A_{\ab{1}}\dots A_{\ab{k}}*l_{i+1}^+\big) |,
\end{align*}
where the index $i$ in $r_i^+,l_i^+$ should be considered modulo $2d$. 
\end{lem}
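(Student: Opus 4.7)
The plan is to reduce both inequalities to the base case $k=0$ by exploiting the positivity of the Möbius map $g := A_{\ab{1}}\cdots A_{\ab{k}}$, and then to verify the base case by a balance-of-scales argument as $N\to\infty$.

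\textbf{Reduction via positivity.} Since $(\ab{n})_{n\in\NN}\in\cK_N^\alpha$ implies $\ab{0}\neq\overline{\alpha}$, the word $\alpha\ab{1}\cdots\ab{k}$ satisfies the no-backtracking condition~\eqref{eqnobacktrack}, hence by Part~(1) of Lemma~\ref{propertiesmatrices} the product $g=A_{\ab{1}}\cdots A_{\ab{k}}$ is positive. Write $g=\bigl(\begin{smallmatrix}a&b\\ c&d\end{smallmatrix}\bigr)$ with $a,b,c,d\geq 0$ and $\det g=1$ (so $c,d$ are not both zero). Integrating $g'(t)=(ct+d)^{-2}$ gives the explicit formula
\[
    |g(t_2)-g(t_1)| = \frac{t_2-t_1}{(ct_1+d)(ct_2+d)}, \qquad 0\leq t_1<t_2.
\]
Taking ratios and using $r_{i-1}^+<r_i^+$, $l_i^+<l_{i+1}^+$ together with the non-negativity of $c,d$, one gets
\[
    \frac{|g(r_i^+)-g(l_i^+)|}{|g(l_i^+)-g(r_{i-1}^+)|} = \frac{r_i^+-l_i^+}{l_i^+-r_{i-1}^+}\cdot\frac{cr_{i-1}^++d}{cr_i^++d} \leq \frac{r_i^+-l_i^+}{l_i^+-r_{i-1}^+},
\]
and symmetrically for the second ratio. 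Thus the general $k$ inequalities follow from their $k=0$ counterparts.

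\textbf{Base case $k=0$.} By the definition in Section~\ref{subsec:Cantordescription}, the hole $(l_i^+, r_i^+)$ is covered by a nested union of deleted intervals of levels $N, 2N, 3N,\ldots$ abutting $x_i^+$ from both sides. By Lemma~\ref{lemconvergencecontinuedfraction} these intervals shrink exponentially with their level, so there exist constants $C>0$ and $\lambda\in(0,1)$ (depending only on $S$ and $\alpha$) with
\[
    |l_i^+-r_i^+| \leq 2C\sum_{j\geq 1}\lambda^{jN} \xrightarrow[N\to\infty]{} 0,
\]
uniformly in $i$. The same estimate gives $|x_i^+-l_i^+|=o(1)$ and $|r_{i-1}^+-x_{i-1}^+|=o(1)$ as $N\to\infty$. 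Since the points $x_0^+<x_1^+<\dots<x_{2d-1}^+$ are the fixed endpoints of the partition~\eqref{positivedecomp}, with positive gaps $|x_i^+-x_{i-1}^+|$ independent of $N$, we obtain
\[
    |l_i^+-r_{i-1}^+| \geq |x_i^+-x_{i-1}^+| - o(1), \qquad |l_{i+1}^+-r_i^+| \geq |x_{i+1}^+-x_i^+| - o(1).
\]
For $N$ large, the right-hand sides exceed $|l_i^+-r_i^+|$, proving both inequalities at $k=0$.

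\textbf{Main obstacle and edge cases.} The delicate point is securing the distortion estimate uniformly in $k$ and in the sequence $\ab{1},\dots,\ab{k}$; this is handled cleanly by the positivity of $g$, which both provides the explicit length formula above and places the pole $g^{-1}(\infty)=-d/c$ in $(-\infty,0]$, making the distortion factor $\leq 1$ in the correct direction. A secondary subtlety concerns the circular indices $i\in\{0,2d-1\}$, where one of the neighboring intervals wraps around $\infty$; these cases are treated analogously after working in $\RR\PP^1$, using that $r_0^+$ and $l_{2d-1}^+$ are the minimum and maximum of $\KK^+$, so the corresponding adjacent interval is unbounded and trivially exceeds the hole in length.
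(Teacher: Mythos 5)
Your reduction to the $k=0$ case has a genuine gap. The positivity of $g=A_{\ab{1}}\cdots A_{\ab{k}}$ places the pole $-d/c$ in $(-\infty,0]$, and the explicit length formula does show that the ratio
\[
\frac{|g(r_i^+)-g(l_i^+)|}{|g(l_i^+)-g(r_{i-1}^+)|}
=\frac{r_i^+-l_i^+}{l_i^+-r_{i-1}^+}\cdot\frac{cr_{i-1}^++d}{cr_i^++d}
\]
decreases (the extra factor is $\leq 1$ since $r_{i-1}^+<r_i^+$). But the second ratio is \emph{not} symmetric. The same computation gives
\[
\frac{|g(r_i^+)-g(l_i^+)|}{|g(l_{i+1}^+)-g(r_i^+)|}
=\frac{r_i^+-l_i^+}{l_{i+1}^+-r_i^+}\cdot\frac{cl_{i+1}^++d}{cl_i^++d},
\]
and here the extra factor is $\geq 1$ because $l_{i+1}^+>l_i^+$: when the pole lies to the left of the configuration, the hole is \emph{stretched} relative to the right-hand neighboring interval. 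So the second inequality does not follow from its $k=0$ counterpart by monotonicity alone; your claim ``and symmetrically for the second ratio'' is false.

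The paper's proof handles precisely this obstruction via Lemma~\ref{lemdistortionhomography}: in whichever direction the distortion worsens, the adverse factor is \emph{uniformly bounded} by a constant $C$ depending only on $S$ and $\alpha$ (after normalizing the triple $(x_{i-1},x_i,x_{i+1})$ to $(-1,0,1)$, the pole is kept outside $[-1,1]$ and the factor $|l'_{i+1}-g_i^{-1}(\infty)|^2/|l'_i-g_i^{-1}(\infty)|^2$ is bounded). Then $N$ is chosen large enough so the base-case ratio satisfies $|l'_i-r'_i|/|l'_{i+1}-r'_i|<1/C$, which overcomes the unfavorable distortion. Your $k=0$ estimate only gives the weaker conclusion that the ratio is $<1$ for $N$ large; you would need the quantitative $<1/C$ version plus the bounded-distortion lemma to close the argument. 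A minor secondary issue is your treatment of $i\in\{0,2d-1\}$: the unbounded neighboring interval does not remain unbounded after applying $g$, so the ``trivially exceeds in length'' assertion also requires the distortion machinery rather than being automatic.
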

 
\begin{proof}
Let $z\mapsto g(z) $ denote the M\"obius transformation given by the matrix  
$A_{\ab{1}}\cdots A_{\ab{k}}$. It is convenient to change coordinates to reduce to a standard interval as follows.  For any $0\leq i \leq 2d-1$,   let $\psi_i$ be the unique M\"obius transformation that sends $x_{i-1}$, $x_{i}$ and $x_{i+1}$ respectively to $-1$, $0$ and $1$. 
  Consider the M\"obius transformation $g_i: = g \circ \psi_i^{-1}$ and remark that 
$ g(z) = {g}_i(\psi_i(z) ) $. Thus, if  we set
\[
l_i':= \psi_i(l_i^+), \quad r_i':=\psi_i(r_i^+),  \qquad \text{for} \quad 0\leq i \leq 2d-1,
\]
 we equivalently want to show that
\begin{equation}\label{toprove}
\frac{|( g_i(l_{i}')-  g_i( r_{i}') ) |}{ | g( r_{i-1}') - g_i (l_{i}')) |}
 = \frac{|(  g(l_{i}^+)-  g( r_{i}^+) ) |}{ | g(r_{i-1}^+) - g
(l_{i}^+)   |}<1, 
\qquad 
\frac{|( g_i(l_{i}')-  g_i( r_{i}') ) |}{ |  g_i (l_{i+1}')-  g( r_{i}') ) |}
 = \frac{|(  g(l_{i}^+)-  g( r_{i}^+) ) |}{ |  g
(l_{i+1}^+)-  g(r_{i}^+) |} <1.
\end{equation}
Remark that for any $0\leq i \leq 2d-1$ we have
\begin{equation}\label{ordering}
-1 < r'_{i-1}<l'_{i} <0 <r'_{i} < l'_{i+1}< 1.
\end{equation}
Recall that for any $0\leq i \leq 2d-1 $ the points $l_i^+=l_i^+(N)$ and $r_i^+=r_i^+(N)$  defined in  \S~\ref{subsec:Cantordescription}  converge to $x_i$ as $N$ grows. Thus, by continuity of $g_i$, it follows that   for any $0\leq i \leq 2d-1 $ 
\begin{equation}\label{convergenceinN}
\lim_{N\to \infty } r_{i-1}'(N)=-1, \quad \lim_{N\to \infty } r_{i}'(N)=\lim_{N\to \infty } l_{i}'(N)=0, \quad \lim_{N\to \infty }l_{i+1}'(N)=1. 
\end{equation}
Consider ${N}=2$ and set $\underline{l}_i':=l_i'(2)$ and $\underline{r}_i':=r_i(2)'$.
Fix  a constant $C>1$ such that 
\begin{equation*}
\sup_{0\leq i \leq 2d-2} \frac{4} {|\underline{l}_{i}+1|^2} <C \quad \text{and} \quad  \sup_{0\leq i \leq 2d-2} \frac{4}
{|1-\underline{r}_{i}|^2}< C.
\end{equation*}
 We can then choose $N> 1$ large enough so that we have both
\begin{equation}\label{smallholes}
\sup_{0\leq i \leq 2d-2} \frac
{|l_i'-r_i'|}
{|l_{i+1}'-r_i'|}
<
\frac{1}{C} \quad 
 \textrm{and} \quad 
\sup_{0\leq i \leq 2d-2} \frac
{|l_i'-r_i'|}
{|l_{i}'-r_{i-1}'|}
<
\frac{1}{C},
\end{equation}
which is possible since by~\eqref{convergenceinN}, as $N$ grows,   $l_{i}'- r_i'$ tends to $0$, while the denominators both tend to $1$. 
 
Observe now  that, since $I^+(\lambda^+_i)=[x_{i-1},x_i]$ and  $I^+(\rho^+_i)=[x_{i},x_{i+1}]$,  by Notation~\ref{rholambda},  the assumptions $\lambda^+_i \not=\overline{\ab{k}} $ and $\rho^+_i \not=\overline{\ab{k}}$  guarantee  (by~\eqref{transformedI} and Lemma~\ref{propertiesmatrices}) that the image of the interval $(x_{i-1}, x_{i+1})$ under $g$ is contained in $\RR_+$ and thus that the pole $g^{-1}(\infty)$ does not belong to the interior of the interval $(x_{i-1}, x_{i+1})$  whereas it may be one of the endpoints. It follows that for any $0\leq i \leq 2d-1$ the closed interval $[r_{i-1}',l_{i+1}'] \subset (-1,1)=\psi_i\left((x_{i-1},x_{i+1})\right)$ does not contain the pole $g_i^{-1}(\infty)$.  

If $g_i^{-1}(\infty)< -1< r_{i-1}'$,  
Lemma~\ref{lemdistortionhomography} (applied  to $x:=r'_{i-1}, t:= l'_i, y:=r'_i$) and~\eqref{smallholes} imply that
\begin{equation}\label{firstcase}
\frac
{|g_i(l'_i)-g_i(r'_i)|}
{|g_i(r'_{i-1})-g_i(l'_i)|}
<
\frac
{|l'_i-r'_i|}
{|r'_{i-1}-l'_i|}
<
\frac{1}{C}<1.
\end{equation}
Again Lemma~\ref{lemdistortionhomography}, applied this time to  $x:=l'_{i}, t:= r'_i, y:=l'_{i+1}$, also gives  that
\begin{equation}\label{secondcase}
\frac
{|g_i(l'_i)-g_i(r'_i)|}
{|g_i(l'_{i+1})-g_i(r'_i)|}
< \frac
{|l'_{i+1}-g_i^{-1}(\infty)|^2}
{|l'_{i}-g_i^{-1}(\infty)|^2}
\frac
{|l'_i-r'_i|}
{|l'_{i+1}-r'_i|} .
\end{equation}
Using that $g_i^{-1}(\infty)< -1$ and the function $z \mapsto |l'_{i+1}-z|/|l'_i-z|$ is monotonically increasing for $z<l'_i$ and that by~\eqref{ordering} we have   $l'_{i+1}< 1$, while $\underline{l}'_{i} \leq l'_i$ since $N \geq 2$, we have 
\begin{equation*}
\frac
{|l'_{i+1}-g_i^{-1}(\infty)|^2}
{|l'_{i}-g_i^{-1}(\infty)|^2} \leq \frac{|1-(-1)|^2}{|\underline{l}'_{i}-(-1)|^2} < 
\frac{4} {|\underline{l}_{i}+1|^2} <C.
\end{equation*}
This, together with~\eqref{secondcase} and~\eqref{smallholes}, concludes with~\eqref{firstcase} the proof of the two inequalities in~\eqref{toprove}. 
   
If $l_{i+1}'<1 <g_i^{-1}(\infty) $,  reasoning in a similar way, Lemma~\ref{lemdistortionhomography} and~\eqref{smallholes} imply that 
\[ \frac
{|g_i(l'_i)-g_i(r'_i)|}
{|g_i(l'_{i+1})-g_i(r'_i)|}<
\frac
{|l'_i-r'_i|}
{|l'_{i+1}-r'_i|}
<
\frac{1}{C} , \] 
and, using that this time $z\mapsto (z-r'_{i-1})^2/ (z-r'_i)^2$ is increasing for $z>r'_i$,  $r'_{i-1}< x_{i-1}$ and that  $\underline{r}_{i}\leq r'_{i}$ since $N \geq 2$, also that 
\[ \frac
{|g_i(l'_i)-g_i(r'_i)|}
{|g_i(r'_{i-1})-g_i(l'_i)|}
< \frac
{|r'_{i-1}-g_i^{-1}(\infty)|^2}
{|r'_{i}-g_i^{-1}(\infty)|^2}
\frac
{|l'_i-r'_i|}
{|r'_{i-1}-l'_i|} < 
\frac{4}
{|1-\underline{r}'_{i}|^2}
\frac{|l'_i-r'_i|}
{|r'_{i-1}-l'_i|}
<1 . \] 
This concludes  the proof. 
\end{proof}
An analogous Lemma, whose proof we leave to the reader, also holds for the Cantor set $\KK^-$.  

\begin{lem}\label{lemgapconditionpast}
If $N$ is sufficiently large, let  $(\ab{n})_{n \in \NN}$ be any sequence in $\cK= \cK_{N}^{\alpha}$.   
Then for any $k$ and any $0\leq i \leq 2d-1$  such that both $\lambda^-_i \not=\overline{\ab{k}}$ and $\rho^-_i \not=\overline{\ab{k}}$ the following two conditions are satisfied:
\begin{align*}
	|\big(  B_{\ab{1}}\dots B_{\ab{k}}*l_{i}^-,  B_{\ab{1}}\dots B_{\ab{k}}* r_{i}^-\big) | &< %
	|\big(  B_{\ab{1}}\dots B_{\ab{k}}*r_{i}^-,  B_{\ab{1}}\dots B_{\ab{k}}* l_{i+1}^-\big) |,\\
	 |\big(  B_{\ab{1}}\dots B_{\ab{k}}*l_{i}^-,  B_{\ab{1}}\dots B_{\ab{k}}* r_{i}^-\big) | &< %
	|\big(  B_{\ab{1}}\dots B_{\ab{k}}*r_{i-1}^-,  B_{\ab{1}}\dots B_{\ab{k}}* l_{i}^-\big) |,
\end{align*}
where the index $i$ in $r_i^-,l_i^-$ should be considered modulo $2d$. 
\end{lem}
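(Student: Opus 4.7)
The plan is to mirror verbatim the proof of Lemma~\ref{lemgapconditionfuture}, with three bookkeeping replacements: the forward matrices $A_\beta$ become the backward matrices $B_\beta$, the points $x_i^+, l_i^+, r_i^+$ become $x_i^-, l_i^-, r_i^-$, and the intervals $I^+_\alpha(\cdot)$ become $I^-_\alpha(\cdot)$. Let $g$ denote the M\"obius transformation associated to the product $B_{\ab{1}}\cdots B_{\ab{k}}$, introduce for each $0\leq i\leq 2d-1$ the unique M\"obius transformation $\psi_i$ sending $x_{i-1}^-,x_i^-,x_{i+1}^-$ respectively to $-1,0,1$, and set $l_i':=\psi_i(l_i^-)$, $r_i':=\psi_i(r_i^-)$, $g_i:=g\circ \psi_i^{-1}$. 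The two inequalities to be proved are then equivalent to
\[
	\frac{|g_i(l_i')-g_i(r_i')|}{|g_i(r_{i-1}')-g_i(l_i')|}<1 \qquad \text{and} \qquad \frac{|g_i(l_i')-g_i(r_i')|}{|g_i(l_{i+1}')-g_i(r_i')|}<1.
\]

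Three ingredients are needed, each a direct transcription of its $+$-counterpart. First, the convergence $l_i^-(N),r_i^-(N)\to x_i^-$ as $N\to\infty$ holds by the same nested construction used for $\KK^+$, since the defining sequence of deleted intervals shrinks geometrically regardless of whether one uses $A$- or $B$-matrices. Consequently $r_{i-1}'\to-1$, $l_i', r_i'\to 0$ and $l_{i+1}'\to 1$ in the normalized coordinates, so that for $N$ large the ratios $|l_i'-r_i'|/|l_{i+1}'-r_i'|$ and $|l_i'-r_i'|/|l_i'-r_{i-1}'|$ are smaller than any prescribed constant $1/C$. Second, the distortion estimate from Lemma~\ref{lemdistortionhomography} applies unchanged to $g_i$ since it is formulated for a generic M\"obius transformation.

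The key \emph{geometric} input, namely that the pole $g_i^{-1}(\infty)$ does not lie in the interior of $[r_{i-1}', l_{i+1}']$, is the place where one must verify that the $B$-matrices behave as well as the $A$-matrices. By Notation~\ref{rholambda} one has $I^-_\alpha(\lambda_i^-)=[x_{i-1}^-,x_i^-]$ and $I^-_\alpha(\rho_i^-)=[x_i^-,x_{i+1}^-]$, so that $(x_{i-1}^-,x_{i+1}^-)=I^-_\alpha(\lambda_i^-)\cup I^-_\alpha(\rho_i^-)$. The hypotheses $\lambda_i^-\not=\overline{\ab{k}}$ and $\rho_i^-\not=\overline{\ab{k}}$ ensure that both words $\ab{1}\cdots\ab{k}\lambda_i^-$ and $\ab{1}\cdots\ab{k}\rho_i^-$ satisfy the no-backtracking condition~\eqref{eqnobacktrack}. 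Parts (2) and (4) of Lemma~\ref{propertiesmatrices} (the backward analogues of the positivity used for the $A$-product) then force $g\big((x_{i-1}^-,x_{i+1}^-)\big)\subset[0,+\infty]$, whence $g^{-1}(\infty)$ cannot lie in the open interval $(x_{i-1}^-,x_{i+1}^-)$, i.e.\ $g_i^{-1}(\infty)\notin(-1,1)$.

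Once these preparations are in place, one repeats the dichotomy from the proof of Lemma~\ref{lemgapconditionfuture}: either $g_i^{-1}(\infty)<-1$ or $g_i^{-1}(\infty)>1$. In each branch the monotonicity of the functions $z\mapsto |l'_{i+1}-z|^2/|l'_i-z|^2$ (respectively $z\mapsto(z-r'_{i-1})^2/(z-r'_i)^2$) on the appropriate half-line, combined with the smallness of the ratios from~\eqref{smallholes} applied to the $-$ setting, yields the two desired inequalities by Lemma~\ref{lemdistortionhomography}. The only genuine obstacle is checking the pole-location claim above for the backward matrices; since this reduces immediately to the positivity statements already proved in Parts~(2) and~(4) of Lemma~\ref{propertiesmatrices}, the remainder of the argument is formally identical to the one already written.
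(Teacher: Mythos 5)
Your proposal is correct, and it takes exactly the route the paper intends: the paper leaves this lemma to the reader as "analogous" to Lemma~\ref{lemgapconditionfuture}, and you have faithfully carried out that analogy, correctly isolating the one step that needs genuine verification (the pole of $g_i$ lying outside $(-1,1)$) and reducing it to Parts~(2), (4), (5) of Lemma~\ref{propertiesmatrices} together with~\eqref{transformedI} in the $B$-setting, while the remaining ingredients (convergence of $l_i^-(N),r_i^-(N)\to x_i^-$ and the distortion bound of Lemma~\ref{lemdistortionhomography}) transfer verbatim.
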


\begin{proofof}{Proposition}{propsumcantorsets} 
Remark that as $N$ tends to infinity, the minima $m_N^\pm $ and maxima $M_N^\pm$ of  $\KK_{\alpha, N}^{\pm}$ (which by Lemma~\ref{lemdescriptionholes} are given by $r_0^\pm$ and $l_{2d-1}^\pm$) tend respectively to $0$ and $+\infty$. Thus, the size of $\KK^\pm$ increases as $N$ grows. Furthermore, the holes in $\KK^\pm$ shrink exponentially. Hence, we can choose $N $ large enough such that (1)   $M_N^+ + M_N^- - m_N^+ -m_N^->\mu$; (2) the size of $\KK^+$ and $\KK^-$ is larger than the size of any hole, i.e.\ the \emph{size condition} holds and in addition (3) both Lemma~\ref{lemgapconditionfuture} and Lemma~\ref{lemgapconditionpast} hold. 
According to Lemma~\ref{lemgapconditionfuture} the Cantor set $\KK^{+}$ admits a slow subdivision satisfying the gap condition.
Indeed, the holes in $\KK^{+}$ are described in Lemma~\ref{lemdescriptionholes} and the first $2d-1=|\cA|-1$ levels $\KK^{+}(1)\supset\dots\supset\KK^{+}(2d-1)$ of the subdivision are defined just removing, in any order, the holes corresponding to holes of first generation.
Similarly, the next $(2d-1)^2$ levels of the subdivision are defined removing, in any order, the holes of second generation and so on.
It is clear that at every step the intervals $K^L, K^R \subset \KK^+(n)$ as in property $(2)$ of the definition of slow subdivision  are larger than the ones considered in  Lemma~\ref{lemgapconditionfuture} and hence the gap condition holds.
Similarly, Lemma~\ref{lemdescriptionholes} and Lemma~\ref{lemgapconditionpast} imply that the Cantor set $\KK^{-}$ admits a slow subdivision satisfying the gap condition. 
Hence, applying Hall's Theorem  (Theorem~\ref{thm:Hall}) one gets the desired conclusion.  
\end{proofof}

\appendix

\section{Hall's Theorem on the sums of Cantor sets}\label{sec:thmHall}
In this appendix we include, for completeness' sake, the proof of Theorem~\ref{thm:Hall}.

Consider any Cantor set $\KK$ and let $(B_i)_{i\in\NN}$ be the collection of its holes.
A slow subdivision $(\KK(n))_{n\in\NN}$ for $\KK$ is called a \emph{monotone slow subdivision} if  the holes $B_1, \dots, B_n, \dots $ such that for any $n$ $\KK(n+1)=\KK(n)\setminus B_{n+1}$ are ordered by size, i.e.\ $|B_{n+1}|\leq |B_n|$ for any $n$.
It is hence clear that  a monotone slow subdivision always exist. 

Let $K$ be a compact interval and let $B$ be an open interval with $B\subset K$, where the inclusion is obviously strict.
Define the two closed subintervals $K^L$ and $K^R$ of $K$ such that $K=K^L\sqcup B\sqcup K^R$.  As suggested by the notation, we assume that $K^L$ is on the left side of $B$ and $K^R$ is on the right side of $B$. 

\begin{lem}\label{LemGapConditionForGeometricSubdivision}
Let $(\KK(n))_{n\in\NN}$ be a monotone slow subdivision for the Cantor set $\KK$. 
If $\KK$ admits another slow subdivision $(\widetilde{\KK}(n))_{n\in\NN}$ which satisfies the gap condition, then also the monotone slow subdivision $(\KK(n))_{n\in\NN}$ satisfies the gap condition. 
\end{lem}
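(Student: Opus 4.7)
The plan is to reduce the gap condition at step $n+1$ of the monotone subdivision $(\KK(n))_{n\in\NN}$ to a suitable application of the gap condition for $(\widetilde{\KK}(n))_{n\in\NN}$. Let $B_{n+1}$ be the hole removed from $K_n\in\KK(n)$, giving pieces $K_n^L$ and $K_n^R$. I will argue only for $K_n^L$, the other piece being symmetric. Because the enumeration is monotone non-increasing, every hole $B_i$ with $i\leq n$ satisfies $|B_i|\geq|B_{n+1}|$, so the left endpoint of $K_n^L$ is either $\min\KK$ or the right endpoint $r_{B^*}$ of the hole $B^*\in\{B_1,\dots,B_n\}$ closest to $B_{n+1}$ from the left, and in the latter case $|B^*|\geq|B_{n+1}|$. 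Writing $l_B$ and $r_B$ for the endpoints of a hole $B$, the desired inequality reduces to
\[
	l_{B_{n+1}}-r_{B^*}>|B_{n+1}|,
\]
with $r_{B^*}$ replaced by $\min\KK$ in the degenerate case (in which the inequality will follow immediately from the first step below).

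Let $m+1$ be the step at which $B_{n+1}$ is removed in $(\widetilde{\KK}(n))_{n\in\NN}$, from an interval $\widetilde K_m$ whose left piece $\widetilde K_m^L$ has left endpoint $L_m$; the gap condition for this subdivision yields
\[
	l_{B_{n+1}}-L_m=|\widetilde K_m^L|>|B_{n+1}|.
\]
If $L_m\geq r_{B^*}$, combining the previous two displays is enough. The interesting case is $L_m<r_{B^*}$, and in it I will establish that $B^*$ must be removed \emph{after} $B_{n+1}$ in $(\widetilde{\KK}(n))_{n\in\NN}$, and then invoke the gap condition a second time at the step in which $B^*$ is removed.

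The claim that $B^*$ is removed after $B_{n+1}$ rests on the monotonicity of the left endpoint of the interval of $\widetilde\KK(k)$ containing $B_{n+1}$: this left endpoint is non-decreasing in $k$, and it jumps to $r_B$ at every step at which a hole $B$ lying to the left of $B_{n+1}$ in the current interval is removed. If $B^*$ were removed at some step $k+1\leq m$, let $I_k\in\widetilde\KK(k)$ be the interval containing $B^*$. Either $I_k$ also contains $B_{n+1}$, so that removing $B^*$ sends the left endpoint of the interval containing $B_{n+1}$ to $r_{B^*}$ and therefore $L_m\geq r_{B^*}$; or $I_k$ does not contain $B_{n+1}$, in which case some hole $B'$ already removed from a predecessor of $I_k$ lies strictly between $B^*$ and $B_{n+1}$, the same book-keeping gives $L_m\geq r_{B'}>r_{B^*}$. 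Both alternatives contradict the assumption $L_m<r_{B^*}$.

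Once $B^*$ is known to be removed after $B_{n+1}$, its removal takes place inside some subinterval $J\subseteq\widetilde K_m^L\subseteq[L_m,l_{B_{n+1}}]$. The right piece of $J\setminus B^*$ then has left endpoint $r_{B^*}$, right endpoint at most $l_{B_{n+1}}$, and length strictly greater than $|B^*|\geq|B_{n+1}|$ by the gap condition for $(\widetilde\KK(n))_{n\in\NN}$. This forces $l_{B_{n+1}}-r_{B^*}>|B_{n+1}|$, which is what was needed; the symmetric argument settles $K_n^R$. The delicate part of the argument is the combinatorial book-keeping used to show that $B^*$ cannot have been removed before $B_{n+1}$ when $L_m<r_{B^*}$: once the monotonicity of the left endpoint of the interval containing $B_{n+1}$ is established, everything else is a direct application of the hypothesized gap condition applied either at step $m+1$ or at the step at which $B^*$ is removed.
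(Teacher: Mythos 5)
Your proof is correct and essentially the same as the paper's: both arguments compare the step at which $B_{n+1}$ is removed in $(\widetilde\KK(n))_n$ with the step at which the adjacent hole (which by monotonicity is at least as long as $B_{n+1}$) is removed, invoking the gap condition for $(\widetilde\KK(n))_n$ at both steps. The paper phrases this as a proof by contradiction on the $K^R$ side, while you run a direct two-case argument on the $K^L$ side, but the underlying ideas coincide.
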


\begin{proof}
Fix any $n$, consider the interval $K$ in the level $\KK(n)$ and  the hole $B_K \subset K$ such that $\KK(n+1) \cap K = K \setminus B_K$. Let  $K^L$ and $K^R$  be as usual 
 the closed subintervals of $K$ such that $K=K^L\sqcup B_K \sqcup K^R$.   Let us show that $|B_K|<|K^R|$, the proof of $|B_K|<|K^L|$ being the same. 
 
Consider the integer $m$ and the interval $\widetilde{K}$ of the $\widetilde{\KK}(m)$ such that $B_{\widetilde{K}} = B_K$, i.e.\  $\widetilde{\KK}(m+1)\cap\widetilde{K}=\widetilde{K}\setminus B_K$  and let $\widetilde{K}^L$ and $\widetilde{K}^R$ be the closed subintervals of $\widetilde{K}$ such that $\widetilde{K}=\widetilde{K}^L\sqcup B_K \sqcup \widetilde{K}^R$. 
Let $B$ be the hole in $\KK$ whose left endpoint coincides with the right endpoint of $K$ (and of $K^R$).
Remark that since the slow subdivision $((\KK(n))_{n\in\NN}$ is a monotone slow subdivision then we have $|B|\geq|B_K|$.  
Since  by assumption $(\widetilde{\KK}(n))_{n\in\NN}$ satisfies the gap condition, we have  $|B_K|<|\widetilde{K}^R|$. If by contradiction  $|B_K|\geq|K^R|$, since  $K^R$ and $\widetilde{K}^R$ have the same endpoints,   $K^R$ must be strictly contained in $\widetilde{K}^R$.  Equivalently, this means that we have $\widetilde{K}^R\cap B \neq\emptyset$.
 Remark that for any given slow subdivision for $\KK$, any interval of any level of the subdivision strictly contains all the holes which intersect it. Thus we must have $B\subset\widetilde{K}^R$.
 Hence, the interval $\widetilde{K}^R$  contains the hole $B$ and $K^R$ is a connected component of $\widetilde{K}^R\setminus B$ and  $|B|\geq|B_K| \geq|K^R|$. Since for some $m'>m$ the hole $B$ is removed from $\widetilde{K}^R$, this implies that the gap condition cannot hold for  $\widetilde{\KK}(m')$, thus giving  a contradiction and concluding the proof by absurd that $|B_K|<|K^R|$. 
\end{proof}

Hall's Theorem is proved iterating the simple argument stated in the Lemma below, whose proof is left to the reader (see also Lemma 2 in Chapter 4 in \cite{CF}).

\begin{lem}\label{LemSumOfIntervals}
Let $K$ and $F$ be two compact intervals.
Let $B$ be an open interval contained in $K$.
If $|B|<|F|$ then
\[
	K+F=(K^L+F)\cup (K^R+F).
\]
\end{lem}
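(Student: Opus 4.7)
The plan is to work directly with coordinates of the intervals, since the Minkowski sum of two compact intervals is just a compact interval whose endpoints are computed from those of the summands. First I would write $K = [a,d]$, $B = (b,c)$ with $a \leq b < c \leq d$, so that $K^L = [a,b]$ and $K^R = [c,d]$, and write $F = [e,f]$. The hypothesis $|B|<|F|$ then reads $c - b < f - e$.

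Next I would compute each of the three Minkowski sums explicitly as intervals:
\[
K + F = [a+e, d+f], \qquad K^L + F = [a+e, b+f], \qquad K^R + F = [c+e, d+f].
\]
The inclusion $(K^L + F) \cup (K^R + F) \subset K + F$ is immediate since $K^L \cup K^R \subset K$. For the reverse inclusion, the two sub-intervals on the right share the left endpoint $a+e$ and right endpoint $d+f$ with $K + F$, so they will cover $K+F$ exactly when the right endpoint of $K^L + F$ is at least the left endpoint of $K^R + F$, that is when $b + f \geq c + e$.

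Rearranging, this is precisely $f - e \geq c - b$, i.e.\ $|F| \geq |B|$, which is given (in fact strictly) by hypothesis. I do not anticipate a real obstacle here; the only mild care needed is to handle the case where $K^L$ or $K^R$ degenerates to a single point (when $a=b$ or $c=d$), but the endpoint computation is unchanged, so the argument goes through uniformly. The entire proof fits in a few lines once the endpoints are written down.
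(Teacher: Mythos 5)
Your proof is correct, and the paper in fact leaves the proof of this lemma to the reader (citing Lemma~2 in Chapter~4 of Cusick--Flahive), so there is no competing argument in the text to compare against. Your direct endpoint computation of the Minkowski sums, reducing the covering condition to $b+f \geq c+e$, i.e.\ $|F| \geq |B|$, is the standard elementary argument the authors had in mind, and your remark about the degenerate cases $a=b$ or $c=d$ correctly notes that nothing breaks there.
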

Remark that the intervals $K^L+F$ and $K^R+F$ in the Lemma are not disjoint, indeed they are closed and the sum $K+F$ is connected. 

\begin{proofof}{Theorem}{thm:Hall}
Let $\left(\KK(n)\right)_{n\in\NN}$ and $\left(\FF(n)\right)_{n\in\NN}$ be slow monotone  subdivisions   respectively for $\KK$ and $\FF$. Since by assumption  $\KK$ and $\FF$ admit a slow subdivision which satisfy the gap condition, by Lemma~\ref{LemGapConditionForGeometricSubdivision} also  $\left(\KK(n)\right)_{n\in\NN}$ and $\left(\FF(n)\right)_{n\in\NN}$ satisfy the gap condition. 
  Set $K_0:=[\min\KK,\max\KK]$ and $F_0:=[\min\FF,\max\FF]$ and fix any point $x\in K_0+F_0$.
The Theorem follows if we show that we can construct two sequences $(n_i)_{i \in \NN}$ and  $(m_i)_{i \in \NN}$ such that $n_i \to \infty$, $m_i \to \infty$  and two sequences of nested closed intervals $(K_i)_{i\in\NN}$ and $(F_i)_{i\in\NN}$, where $K_i$ is an interval of the level $\KK(n_i)$ and $F_i$ is an interval of the level $\FF(m_i)$, such that $x\in K_i+F_i$ for any $i \in \NN$. 
Indeed setting $k:=\bigcap_{i\in\NN}K_i$ and $f:=\bigcap_{j\in\NN}F_j$ one has $x=k+f$ with $k\in\KK$ and $f\in\FF$.

We will construct the sequences  $(n_i)_{i \in \NN}$ and  $(m_i)_{i \in \NN}$ and the  two families of nested intervals by induction on  $i$  in $\NN$.  Fix $i$  in $\NN$ and assume that respectively the first $i+1$ nested intervals $K_0\supset K_1\supset\dots\supset K_i$ and the first $i+1$ nested intervals 
$F_0\supset F_1\supset\dots\supset F_i$ are defined.  Let $n(K_i)$ be the minimum $n \in \NN$ such that $K_i \cap \KK(n) \neq K_i $ and let $B_i$ be the hole in $K_i$, i.e.\ the  open subinterval $B_i\subset K_i$ such that $\KK(n(K_i))\cap K_i=K_i\setminus B_i$.
Similarly, let $n(F_i)$ be the minimum $n \in \NN$ such that $F_i \cap \FF(n) \neq F_i $ and let $C_i$ be the hole in $F_i$, i.e.\ the  open subinterval $C_i\subset F_i$ such that $\FF(n(F_i))\cap F_i=F_i\setminus C_i$. 

We will simultaneously prove by induction that for every $i$ the intervals $(K_i, F_i)$ and the holes $B_i \subset K_i$, $C_i \subset F_i$ in our construction satisfy the condition
\begin{equation}\label{EqThmHallTheorem}
	|B_i|<|F_i| \quad \text{ and } \quad |C_i|<|K_i|.
\end{equation}
Observe that for $i=0$ the condition is true by  the size condition which is assumed in the statement. To define the intervals at level $i+1$, we   subdivide the interval having the bigger hole. 
Assume that $|B_i|\geq|C_i|$, the other case being the same.
Since $|B_i|<|F_i|$ then Lemma~\ref{LemSumOfIntervals} implies $K_i+F_i=(K_i^L+F_i)\cup (K_i^R+F_i)$.  
Assume without loss of generality that $x\in K_i^L+F_i$ and set $K_{i+1}:=K_i^L$ and  $n_{i+1}=n(K_i)$, so that $K_{i+1} \in \KK(n_{i+1})$.
Set also $F_{i+1}=F_i$ and $m_{i+1}=m_i$, so $F_{i+1} \in \FF(m_{i+1})$ holds trivially. 
By the property of a monotone slow subdivision, the hole $B_{i+1}\subset K_{i+1}$ satisfies $|B_{i+1}|\leq|B_i|$ and therefore $|B_{i+1}|<|F_i|$ by inductive hypothesis. 
On the other hand the gap condition implies $|B_i|<|K_i^L|=|K_{i+1}|$ and therefore, since $C_i$ is by choice the smaller of the two holes,  $|C_i|\leq|B_i|<|K_{i+1}|$.
Thus, the pair of intervals $(K_{i+1},F_{i+1})$, with holes $B_{i+1}$ and $C_{i+1}=C_i$ satisfies the same condition~\eqref{EqThmHallTheorem} as the pair $(K_i,F_i)$ with holes $B_i$ and $C_i$, and moreover we have $x\in K_{i+1}+F_{i+1}$.
The inductive step is complete. Finally, since the holes of a Cantor set which are longer than a given $\epsilon>0$ are just finitely many, it is clear that both $(n_i)_{i \in \NN}$ and $(m_i)_{i \in \NN}$ are increasing sequences.  The Theorem is proved. 
\end{proofof}

\subsection*{Acknowledgements}
We would like to thank P.~Hubert, R.~Mukamel and C.~Series for useful discussions.
We would also like to thank the hospitality given by ICERM during the conference \emph{Geometric Structures in Low-Dimensional Dynamics}, by the Israel Institute for Advanced Studies in Jerusalem during the program \emph{Arithmetic \& Dynamics} and by the Max Planck Institute during the program \emph{Dynamics and Numbers},  where parts of this project were done.
Some of the research visits which made this collaboration possible were supported by the EPSRC Grant EP/I019030/1 and the ERC Grant ChaParDyn.
Ulcigrai is currently supported by the ERC Grant ChaParDyn.
Artigiani is supported by an EPSRC Doctoral Training Grant.



\end{document}